\newtheorem*{openpb}{Open Problem}
\newtheorem{remark}{Remark}[section]
\newtheorem{theorem}[remark]{Theorem}
\newtheorem{corollary}[remark]{Corollary}
\newtheorem{lemma}[remark]{Lemma}
\newtheorem{proposition}[remark]{Proposition}
\newtheorem{example}[remark]{Example}
\numberwithin{equation}{section}
\title{On a Class of Nonlocal Obstacle Type Problems \\Related to the Distributional Riesz Fractional Derivative}
\author{Catharine W.K. Lo\thanks{CMAFcIO -- Departamento de Matem\'atica, Faculdade de Ci\^encias, Universidade de Lisboa P-1749-016 Lisboa, Portugal\\ Email address: cwklo@fc.ul.pt} \, and Jos\'e Francisco Rodrigues\thanks{CMAFcIO -- Departamento de Matem\'atica, Faculdade de Ci\^encias, Universidade de Lisboa P-1749-016 Lisboa, Portugal\\ Email address: jfrodrigues@ciencias.ulisboa.pt}}
\date{}
\begin{document}
\maketitle

\begin{abstract}
    In this work, we consider the nonlocal obstacle problem with a given obstacle $\psi$ in a bounded Lipschitz domain $\Omega$ in $\mathbb{R}^{d}$, such that $\mathbb{K}_\psi^s=\{v\in H^s_0(\Omega):v\geq\psi \text{ a.e. in }\Omega\}\neq\emptyset$, given by \[u\in\mathbb{K}_\psi^s:\quad\langle\mathcal{L}_au,v-u\rangle\geq\langle F,v-u\rangle\quad\forall v\in\mathbb{K}^s_\psi,\] for $F$ in $H^{-s}(\Omega)$, the dual space of the fractional Sobolev space $H^s_0(\Omega)$, $0<s<1$. The nonlocal operator $\mathcal{L}_a:H^s_0(\Omega)\to H^{-s}(\Omega)$ is defined with a measurable, bounded, strictly positive singular kernel $a(x,y):\mathbb{R}^d\times\mathbb{R}^d\to[0,\infty)$, by     the bilinear form \[\langle\mathcal{L}_au,v\rangle=P.V.\int_{\mathbb{R}^d}\int_{\mathbb{R}^d} \tilde{v}(x)(\tilde{u}(x)-\tilde{u}(y))a(x,y) \,dy\,dx=\mathcal{E}_a(u,v),\] which is a (not necessarily symmetric) Dirichlet form, where $\tilde{u},\tilde{v}$ are the zero extensions of $u$ and $v$ outside $\Omega$ respectively. Furthermore, we show that the fractional operator $\tilde{\mathcal{L}}_A=-D^s\cdot AD^s:H^s_0(\Omega)\to H^{-s}(\Omega)$ defined with the distributional Riesz fractional $D^s$ and with a measurable, bounded matrix $A(x)$ corresponds to a nonlocal integral operator $\mathcal{L}_{k_A}$ with a well defined integral singular kernel $a=k_A$. The corresponding $s$-fractional obstacle problem for $\tilde{\mathcal{L}}_A$ is shown to converge as $s\nearrow1$ to the obstacle problem in $H^1_0(\Omega)$ with the operator $-D\cdot AD$ given with the classical gradient $D$.
    
    We mainly consider obstacle-type problems involving the bilinear form $\mathcal{E}_a$ with one or two obstacles, as well as the N-membranes problem, thereby deriving several results, such as the weak maximum principle, comparison properties, approximation by bounded penalization, and also the Lewy-Stampacchia inequalities. This provides regularity of the solutions, including a global estimate in $L^\infty(\Omega)$, local H\"older regularity of the solutions when $a$ is symmetric, and local regularity in fractional Sobolev spaces $W^{2s,p}_{loc}(\Omega)$ and in $C^1(\Omega)$ when $\mathcal{L}_a=(-\Delta)^s$ corresponds to fractional $s$-Laplacian obstacle-type problems \[u\in\mathbb{K}^s_\psi(\Omega):\int_{\mathbb{R}^d}(D^su-\bm{f})\cdot D^s(v-u)\,dx\geq0\quad\forall v\in\mathbb{K}^s_\psi\text{ for }\bm{f}\in [L^2(\mathbb{R}^d)]^d.\]  These novel results are complemented with the extension of the Lewy-Stampacchia inequalities to the order dual of $H^s_0(\Omega)$ and some remarks on the associated $s$-capacity and the $s$-nonlocal obstacle problem for a general $\mathcal{L}_a$.

\end{abstract}

\section{Introduction}
Fractional problems with obstacle-type constraints were first considered by Silvestre as part of his thesis in 2005 in \cite{SilvestreThesis}, which was published in 2007 in \cite{SilvestrePaper}. Since then, many obstacle-type problems with various nonlocal operators have been considered, extensively for the fractional Laplacian (such as in \cite{CaffarelliSalsaSilvestre}, \cite{DanielliSalsa}, \cite{MusinaNazarovVarIneqSpectral}, \cite{MusinaNazarovSreenadhVarIneqFracLap} and \cite{RosOtonObsPb}), as well as for other nonlocal operators (see, for example, \cite{AbatangeloRosOton}, \cite{AntilRautenberg}, \cite{CaffarelliDeSilvaSavin}, \cite{DanielliPetrosyanPopObsPbMarkov}, \cite{SalsaSignorini} and \cite{ServadeiValdinoci2013RMILewyStampacchia}), which are mainly generalizations of the fractional Laplacian to nonlocal one-variable kernels $K(y)$ satisfying homogeneous and symmetry properties (see in particular, \cite{AbatangeloRosOton} and \cite{CaffarelliDeSilvaSavin}).

Recently, in a series of two interesting papers \cite{SS1} and \cite{SS2}, Shieh and Spector have considered a new class of fractional partial differential equations based on the distributional Riesz fractional derivatives. These fractional operators satisfy basic physical invariance  requirements, as observed by Silhavy \cite{Silhavy}, who developed a fractional vector calculus for such operators. Instead of using the well-known fractional Laplacian, their starting concept is the distributional Riesz fractional gradient of order $s\in(0,1)$, which will be called here the $s$-gradient $D^s$, for brevity: for $u\in L^p(\mathbb{R}^d)$, $p\in(1,\infty)$, we set
\begin{equation}\label{sGradDef}D^s_ju=\frac{\partial^s u}{\partial x_j^s}=\frac{\partial}{\partial x_j}(I_{1-s}*u),\quad 0<s<1,\quad j=1,\dots, d,\end{equation}
where $\frac{\partial}{\partial x_j}$ is taken in the distributional sense, for every $v\in C_c^\infty(\mathbb{R}^d)$, \[\left\langle\frac{\partial^s u}{\partial x_j^s},v\right\rangle=-\left\langle (I_{1-s}*u),\frac{\partial v}{\partial x_j}\right\rangle=-\int_{\mathbb{R}^d}\left(I_{1-s}*u\right)\frac{\partial v}{\partial x_j}\,dx,\] with $I_s$ denoting the Riesz potential of order $s$, $0<s<1$: \begin{equation}\label{DsDefRiesz}(I_s*u)(x)=c_{d,1-s}\int_{\mathbb{R}^d}\frac{u(y)}{|x-y|^{d-s}}\,dy,\quad\text{ with }c_{d,s}=2^s\pi^{-\frac{d}{2}}\frac{\Gamma\left(\frac{d+s+1}{2}\right)}{\Gamma\left(\frac{1-s}{2}\right)}.\end{equation} Conversely, by Theorem 1.12 of  \cite{SS1}, every $u\in C_c^\infty(\Omega)$ can be expressed as \begin{equation}\label{InverseFormulaDs}u=I_s*\sum_{j=1}^d\mathcal{R}_j\frac{\partial^s u}{\partial x_j^s},\end{equation} where $\mathcal{R}_j$ is the Riesz transform, which we recall, is given by 
\[\mathcal{R}_jf(x):=\frac{\Gamma\left(\frac{d+1}{2}\right)}{\pi^{(d+1)/2}}\lim_{\epsilon\to0}\int_{\{|y|>\epsilon\}}\frac{y_j}{|y|^{d+1}}f(x-y)\,dy,\quad j=1,\dots,d.\]  Thus, we can write the $s$-gradient ($D^s$) and the $s$-divergence ($D^s\cdot$) for sufficiently regular functions $u$ and vector $\phi$ (\cite{Comi1}, \cite{SS1}, \cite{SS2}, \cite{Silhavy}) in integral form, respectively, by \[D^su(x):=c_{d,s}\lim_{\epsilon\to0}\int_{\mathbb{R}^d}\frac{zu(x+z)}{|z|^{d+s+1}}\chi_\epsilon(0,z)\,dz=c_{d,s}\int_{\mathbb{R}^d}\frac{u(x)-u(y)}{|x-y|^{d+s}}\frac{x-y}{|x-y|}\,dy\] and \[D^s\cdot\phi(x):=c_{d,s}\lim_{\epsilon\to0}\int_{\mathbb{R}^d}\frac{z\cdot \phi(x+z)}{|z|^{d+s+1}}\chi_\epsilon(0,z)\,dz=c_{d,s}\int_{\mathbb{R}^d}\frac{\phi(x)-\phi(y)}{|x-y|^{d+s}}\cdot\frac{x-y}{|x-y|}\,dy,\] where $\chi_\epsilon(x,z)$ is the characteristic function of the set $\{(x,z):|z-x|>\epsilon\}$ for $\epsilon>0$.  As it was shown in \cite{SS1}, $D^s$ has nice properties for $u\in C_c^\infty(\mathbb{R}^d)$, namely it coincides with the fractional Laplacian as follows: \[(-\Delta)^su=-D^s\cdot D^su,\] where, for $0<s<1$, \[(-\Delta)^su(x)=c_{d,s}^2\lim_{\epsilon\to0}\int_{\mathbb{R}^d}\frac{u(x)-u(y)}{|x-y|^{d+2s}}\chi_\epsilon(x,y)\,dy=\frac{1}{2}c_{d,s}^2\int_{\mathbb{R}^d}\frac{u(x+y)+u(x-y)-2u(x)}{|y|^{d+2s}}\,dy. \]

Observe that for the $s$-gradient ($D^s$) and the $s$-divergence ($D^s\cdot$) we need to consider the Cauchy principal value (P.V.) in the first expressions, but not in the second ones. This is because for the second expressions, we can estimate the integrand, as in \cite{Comi1}, by separating the integrals into the parts $\{|y-x|\leq1\}$ and $\{|y-x|>1\}$. Then the first integral can be controlled by $\omega_d\, Lip(u)\int_0^1 r^{-s}\,dr$, while the second integral can be controlled by $2\omega_d\norm{u}_{L^\infty(\mathbb{R}^d)}\int_1^{+\infty} r^{-(1+s)}\,dr$,
%\[\int_{\{|y-x|\leq1\}}\left|\frac{(y-x)(u(y)-u(x))}{|y-x|^{d+s+1}}\right|\,dy\leq \omega_d\, Lip(u)\int_0^1 r^{-s}\,dr\] and \[\int_{\{|y-x|>1\}}\left|\frac{(y-x)(u(y)-u(x))}{|y-x|^{d+s+1}}\right|\,dy\leq 2\omega_d\norm{u}_{L^\infty(\mathbb{R}^d)}\int_1^{+\infty} r^{-(1+s)}\,dr,\] 
where $Lip(u)$ is the Lipschitz constant for the function $u$ and $\omega_d$ is the spherical measure $\omega_d=\int_{\{|x|=1\}}d\sigma$. Therefore, the second expressions are well-defined for all Lipschitz functions $u$ with compact support, in particular for $u\in C_c^\infty(\mathbb{R}^d)$.

%On the other hand, we have removed a part of the integral which has Cauchy principal value 0 to obtain the second form. Therefore, the equality is true in terms of the Cauchy principal value, and the second form is only well-defined by considering the Cauchy principal value.

Similarly, the fractional Laplacian for smooth $u$, by Lemma 3.2 of \cite{HitchhikerGuide}, has two representations, with the second one being Lebesgue integrable by using a second order Taylor expansion. As a matter of fact, the $s$-divergence, $s$-gradient and $s$-Laplacian are linear operators from $C^\infty$ functions with compact support into $C^\infty$ functions that are rapidly decreasing at $\infty$ and are in $L^p(\mathbb{R}^d)$ for any $p\in[1,\infty]$.

Observe that for $u,v\in C_c^\infty(\mathbb{R}^d)$, by using the duality between $s$-divergence and $s$-gradient as in Lemma 2.5 of \cite{Comi1}, we have 
\begin{align}\label{EquivNorms}\begin{split}&\,2\int_{\mathbb{R}^d} D^s u\cdot D^s v=2\int_{\mathbb{R}^d} v(-\Delta)^s u\\=&\, c_{d,s}^2\left[\int_{\mathbb{R}^d}\lim_{\epsilon\to0} \int_{\mathbb{R}^d}v(y)\frac{u(y)-u(x)}{|x-y|^{d+2s}}\chi_\epsilon(x,y)\,dx\,dy+\int_{\mathbb{R}^d}\lim_{\epsilon\to0} \int_{\mathbb{R}^d}v(x)\frac{u(x)-u(y)}{|x-y|^{d+2s}}\chi_\epsilon(x,y)\,dy\,dx\right]\\=&\, c_{d,s}^2\left[\lim_{\epsilon\to0}\int_{\mathbb{R}^d} \int_{\mathbb{R}^d}v(y)\frac{u(y)-u(x)}{|x-y|^{d+2s}}\chi_\epsilon(x,y)\,dx\,dy+\lim_{\epsilon\to0}\int_{\mathbb{R}^d} \int_{\mathbb{R}^d}v(x)\frac{u(x)-u(y)}{|x-y|^{d+2s}}\chi_\epsilon(x,y)\,dx\,dy\right]\\=&\, c_{d,s}^2\int_{\mathbb{R}^d}\int_{\mathbb{R}^d}\frac{(u(x)-u(y))(v(x)-v(y))}{|x-y|^{d+2s}}\,dx\,dy,\end{split}\end{align} where we have used the above definitions together with the Lebesgue and Fubini theorems. %Note that if we instead consider the functions $u^+$ and $u$, we have T-monotonicity.

In this work, we are concerned with the classical fractional Sobolev space $H^s_0(\Omega)$ in a bounded domain $\Omega\subset\mathbb{R}^d$ with Lipschitz boundary, for $0<s<1$, defined as \[H^s_0(\Omega):=\overline{C_c^\infty(\Omega)}^{\norm{\cdot}_{H^s}},\] with \[\norm{u}_{H^s}^2=\norm{u}_{L^2(\mathbb{R}^d)}^2+\norm{D^su}_{L^2(\mathbb{R}^d)}^2,\] where $u$ is extended by 0 in $\mathbb{R}^d\backslash\Omega$, so that this extension is also in $H^s(\mathbb{R}^d)$. By the Sobolev-Poincar\'e inequality (see Theorem 1.7 of \cite{SS1} and Lemma \ref{Poincare}), we may consider the space $H^s_0(\Omega)$ with the following equivalent norms, owing to \eqref{EquivNorms}, \begin{equation}\label{EquivNorms2}\norm{u}_{H^s_0(\Omega)}^2:=\norm{D^su}_{L^2(\mathbb{R}^d)}^2=\frac{c_{d,s}^2}{2}[u]_{s,{\mathbb{R}^d}}^2:=\frac{c_{d,s}^2}{2}\int_{\mathbb{R}^d}\int_{\mathbb{R}^d}\frac{(u(x)-u(y))^2}{|x-y|^{d+2s}}\,dx\,dy.\end{equation}

On the other hand, since the Riesz kernel is an approximation to the identity as $1-s\to0$, the $s$-derivatives approach the classical derivatives as $s\to1$ in appropriate spaces, as it was observed in Rodrigues-Santos \cite{RodriguesSantos}, Comi-Stefani \cite{Comi2} and Belido et al \cite{BellidoCuetoMoraCorral2021CVPDE}. 

We can subsequently denote the dual space of $H^s_0(\Omega)$ by $H^{-s}(\Omega)$ for $0<s\leq1$. Then, by the Sobolev-Poincar\'e inequalities, we have the embeddings \[H^s_0(\Omega)\hookrightarrow L^q(\Omega), \quad\  L^{2^\#}(\Omega)\hookrightarrow H^{-s}(\Omega)=(H^s_0(\Omega))'\] for $1\leq q\leq2^*$, where $2^*=\frac{2d}{d-2s}$ and $2^\#=\frac{2d}{d+2s}$ when $s<\frac{d}{2}$, and if $d=1$, $2^*=q$ for any finite $q$ and $2^\#=q'=\frac{q}{q-1}$ when $s=\frac{1}{2}$ and $2^*=\infty$ and $2^\#=1$ when $s>\frac{1}{2}$. We recall that those embeddings are compact for $1\leq q<2^*$ (see for example, Theorem 4.54 of \cite{Demengel}). In the whole paper we use $2^\#$ to indicate this number that depends on $d\geq1$ and $0<s\leq1$.

We consider the closed convex set \[\mathbb{K}_\psi^s=\{v\in H^s_0(\Omega):v\geq\psi \text{ a.e. in }\Omega\},\] with a given obstacle $\psi$, such that $\mathbb{K}_\psi^s\neq\emptyset$, and the obstacle problem \begin{equation}u\in\mathbb{K}_\psi^s:\quad\langle\mathcal{L}_au,v-u\rangle\geq\langle F,v-u\rangle\quad\forall v\in\mathbb{K}^s_\psi,\end{equation} for $F$ in $H^{-s}(\Omega)$. Here, the nonlocal operator $\mathcal{L}_a:H^s_0(\Omega)\to H^{-s}(\Omega)$ is a generalization of the fractional Laplacian for a measurable, bounded, strictly positive kernel $a:\mathbb{R}^d\times\mathbb{R}^d\to[0,\infty)$ satisfying \eqref{kAcond}, and is defined in the duality sense for $u,v\in H^s_0(\Omega)$, extended by zero outside $\Omega$: \begin{equation}\label{LAdef}\langle\mathcal{L}_au,v\rangle:=\lim_{\varepsilon\to0}\int_{\mathbb{R}^d} \int_{\mathbb{R}^d}\tilde{v}(x)(\tilde{u}(x)-\tilde{u}(y))a(x,y)\chi_\varepsilon(x,y) \,dy\,dx=\,P.V.\int_{\mathbb{R}^d} \int_{\mathbb{R}^d}\tilde{v}(x)(\tilde{u}(x)-\tilde{u}(y))a(x,y) \,dy\,dx.\end{equation} Physically, the operator $\mathcal{L}_a$ corresponds to the class of uniformly irreducible random walks that admit a cycle decomposition with bounded range, bounded length of cycles, and bounded jump rates \cite{KumagaiDeuschelNonsymmetricKernel}.

To better characterize the properties of the operator $\mathcal{L}_a$, in Section 2, we show that the bilinear form \[\mathcal{E}_a(u,v):=\langle\mathcal{L}_au,v\rangle=\,P.V.\int_{\mathbb{R}^d} \int_{\mathbb{R}^d}\tilde{v}(x)(\tilde{u}(x)-\tilde{u}(y))a(x,y)\,dy\,dx\] is a (not necessarily symmetric) Dirichlet form over $H^s_0(\Omega)\times H^s_0(\Omega)$, where $\tilde{u}$ and $\tilde{v}$ are the zero extensions of $u$ and $v$ outside $\Omega$ respectively. This provides us with many known properties of Dirichlet forms that can be applied to the bilinear form $\mathcal{E}_a$, including the truncation property and some regularity results \cite{FukushimaDirichletForms}\cite{MaRockner}. As a corollary, we obtain that $\mathcal{E}_a$ is a closed, coercive, strictly T-monotone and regular Dirichlet form in $H^s_0(\Omega)$. Furthermore, we use the results of the nonlocal vector calculus developed by Du, Gunzburger, Lehoucq and coworkers in \cite{DElia2020unified}, \cite{DuGunzburgerLehoucqZhouNonlocalDiffusion}, \cite{DuGunzburgerLehoucqZhouNonlocalVectorCalculus} and \cite{GunzburgerLehoucq} to show that the fractional operator $\tilde{\mathcal{L}}_A:H^s_0(\Omega)\to H^{-s}(\Omega)$, defined by \begin{equation}\label{LAtildedef}\langle\tilde{\mathcal{L}}_Au,v\rangle:=\int_{\mathbb{R}^d} A(x)D^su\cdot D^sv \,dx,\quad\forall u,v\in H^s_0(\Omega),\end{equation}  corresponds to a nonlocal integral operator $\mathcal{L}_{k_A}$ with a (not necessarily symmetric) singular kernel $k_A(x,y)$ defined by \eqref{kAform}. We were also motivated by the issues raised by Shieh and Spector in \cite{SS2}, in particular their Open Problem 1.10, which by \eqref{EquivNorms} clearly holds when $\tilde{\mathcal{L}}_A=(-\Delta)^s$. Discussing this issue with examples, we give a counterexample in Example \ref{countereg}, showing how interesting is their Open Problem 1.10 for general strictly elliptic and bounded matrices $A$. Then, we conjecture in the \hyperlink{thesentence}{Open Problem} that the kernel $k_A$ corresponding to $\tilde{\mathcal{L}}_A$ has the required property if and only if $\tilde{\mathcal{L}}_A$ is approximately a constant multiple of the fractional Laplacian, up to small bounded perturbations.

In Section 3, we make use of the comparison property to consider obstacle-type problems involving the bilinear form $\mathcal{E}_a$, thereby considering the nonlocal obstacle problem for which we derive results similar to the classical case in $H^1_0(\Omega)$ as in \cite{Stampacchia1965}, \cite{KinderlehrerStampacchia} and \cite{ObstacleProblems}, such as the weak maximum principle and comparison properties. Making use of convergence properties of the fractional derivatives when $s\nearrow1$ to the classical derivatives, as already observed in \cite{RodriguesSantos}, \cite{BellidoCuetoMoraCorral2021CVPDE} and \cite{Comi2}, we show that the solution of the fractional obstacle problem for $\tilde{\mathcal{L}}_A$ converges to the solution of the classical case corresponding to $s=1$.

By considering the approximation of the obstacle problem by semilinear problems using a bounded penalization, in Section 4, we give a direct proof of the Lewy-Stampacchia inequalities for the obstacle-type problems. Here we consider the non-homogeneous data $F=f\in L^{2^\#}(\Omega)$ not only for the one obstacle problem but also for the two obstacles problem and for the N-membranes problem in the nonlocal framework, extending the results of \cite{ServadeiValdinoci2013RMILewyStampacchia}. In particular, we extend the estimates in energy of the difference between the approximating solutions and the solutions of the one and the two obstacle problems, which may be useful for numerical applications such as in \cite{FiniteElement} or \cite{Application}. More important is the use of the Lewy-Stampacchia inequalities that, upon restricting $a$ to the symmetric case, allows the application of the results of \cite{KassmannDyda} to obtain locally the H\"older regularity of the solutions to those three nonlocal obstacle type problems. Such regularity results are weaker than those obtained from the fractional Laplacian (such as in \cite{RosOton2014Regularity}) or other commonly considered nonlocal kernels \cite{Fall2020CVPDE}, since $a$ is in general not a constant multiple of $|x-y|^{-d-2s}$. In this special case, when $\mathcal{L}_a=(-\Delta)^s$, the one obstacle problem can be written for $\bm{f}\in [L^2(\mathbb{R}^d)]^d$ \[u\in\mathbb{K}^s_\psi(\Omega):\int_{\mathbb{R}^d}(D^su-\bm{f})\cdot D^s(v-u)\,dx\geq0\quad\forall v\in\mathbb{K}^s_\psi(\Omega),\] as well as for the corresponding inequalities for the two obstacles and the N-membranes problems. We are then able to use the results of \cite{BiccariWarmaZuazua} together with the Lewy-Stampacchia inequalities to obtain locally regular solutions in the fractional Sobolev space $W^{2s,p}_{loc}(\Omega)$ for $p\geq 2^\#$ and also in $C^1(\Omega)$ for $s>1/2$ and $p>d/(2s-1)$ when $D^s\cdot \bm{f}\in L^p(\mathbb{R}^d)$.

In Section 5, we further consider some properties related to the fractional $s$-capacity extending some classical results of Stampacchia \cite{Stampacchia1965}. We characterize the order dual of $H^s_0(\Omega)$ as the dual space of $L^2_{C_s}(\Omega)$, i.e. the space of quasi-continuous functions with respect to the $s$-capacity which are in absolute value quasi-everywhere dominated by $H^s_0(\Omega)$ functions, extending results of \cite{AttouchPicard}. That dual space corresponds then to the elements of $H^{-s}(\Omega)$ that are also bounded measures, i.e. $(L^2_{C_s}(\Omega))'=H^{-s}(\Omega)\cap M(\Omega)$. Therefore, using the strict T-monotonicity of $\mathcal{L}_a$, we state the Lewy-Stampacchia inequalities in this dual space. This section ends with some new remarks on the relations of the $\mathcal{E}_a$ obstacle problem and the $s$-capacity.

%We include in an Appendix a proof of the Poincar\'e-Sobolev, Morrey and Trudinger inequalities, in order to understand the independence of their constants when $s\nearrow1$.

\section{The Anisotropic Non-symmetric Nonlocal Bilinear Form}

\subsection{The nonlocal bilinear form as a coercive Dirichlet form}

Our first main result is to show that the nonlocal bilinear form together with its domain, $(\mathcal{E}_a,H^s_0(\Omega))$, defined as \begin{equation}\label{EAdef2.1}\mathcal{E}_a(u,v):=P.V.\int_{\mathbb{R}^d}\int_{\mathbb{R}^d} \tilde{v}(x)(\tilde{u}(x)-\tilde{u}(y))a(x,y) \,dy\,dx,\end{equation} where $\tilde{u},\tilde{v}$ are the zero extensions of $u,v\in H^s_0(\Omega)$ to $\Omega^c$ and $a:\mathbb{R}^d\times\mathbb{R}^d\to[0,\infty), d\geq1$ is a not necessarily symmetric kernel satisfying \begin{equation}\label{kAcond}a_*c_{d,s}^2\leq a(x,y)|x-y|^{d+2s}\leq a^*c_{d,s}^2\quad\forall x,y\in\mathbb{R}^d,x\neq y,\end{equation} is in fact a regular (not necessarily symmetric) Dirichlet form. This will also imply that the nonlocal bilinear form is also strictly T-monotone and will give us many properties, including Harnack's inequality (see for example \cite{FracLapMaxPrinciple}), H\"older regularity of solutions of equations involving this bilinear form (see for instance \cite{KassmannDeGiorgiNonlocal}, \cite{KassmannHolder}, or \cite{KassmannHarnack}), and other results in stochastic processes (as given in \cite{FukushimaDirichletForms}).

We will begin with a remark on the symmetric case.

\begin{proposition}
\label{UnweightedGreen} For given $u,v\in H^s_0(\Omega)$ and $a:\mathbb{R}^d\times\mathbb{R}^d\to[0,\infty)$ symmetric, we have \begin{multline}\label{UnweightedGreenEq} \int_{\mathbb{R}^d}\int_{\mathbb{R}^d} \tilde{v}(x)(\tilde{u}(x)-\tilde{u}(y))a(x,y)\chi_\varepsilon(x,y) \,dy\,dx\\=\frac{1}{2}\int_{\mathbb{R}^d}\int_{\mathbb{R}^d}(\tilde{u}(x)-\tilde{u}(y))(\tilde{v}(x)-\tilde{v}(y)) a(x,y)\chi_\varepsilon(x,y)\,dy\,dx\end{multline} assuming that the integrands are summable for each fixed $\varepsilon>0$, where we have set $\chi_\varepsilon(x,y)$ as the characteristic function of the set $\{|x-y|>\varepsilon\}$ for $\varepsilon>0$.
\end{proposition}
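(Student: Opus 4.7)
The plan is to obtain the identity by a direct symmetrization, using both the assumed symmetry $a(x,y)=a(y,x)$ and the symmetry of the cutoff $\chi_\varepsilon(x,y)=\chi_\varepsilon(y,x)$. Since the integrands on both sides are assumed to be summable for each fixed $\varepsilon>0$, Fubini's theorem and the change of variables $(x,y)\mapsto(y,x)$ are fully justified on the whole of $\mathbb{R}^d\times\mathbb{R}^d$, which is what makes the argument work at the $\chi_\varepsilon$ level before passing to the principal value.

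First, I would denote
\[
I_\varepsilon(u,v):=\int_{\mathbb{R}^d}\int_{\mathbb{R}^d}\tilde v(x)\bigl(\tilde u(x)-\tilde u(y)\bigr)a(x,y)\chi_\varepsilon(x,y)\,dy\,dx,
\]
and apply the change of variables swapping $x$ and $y$ in the repeated integral. Using $a(x,y)=a(y,x)$ and $\chi_\varepsilon(x,y)=\chi_\varepsilon(y,x)$, this yields
\[
I_\varepsilon(u,v)=\int_{\mathbb{R}^d}\int_{\mathbb{R}^d}\tilde v(y)\bigl(\tilde u(y)-\tilde u(x)\bigr)a(x,y)\chi_\varepsilon(x,y)\,dy\,dx.
\]

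Next, I would average the two expressions for $I_\varepsilon(u,v)$ to get
\[
I_\varepsilon(u,v)=\tfrac{1}{2}\int_{\mathbb{R}^d}\int_{\mathbb{R}^d}\bigl[\tilde v(x)(\tilde u(x)-\tilde u(y))+\tilde v(y)(\tilde u(y)-\tilde u(x))\bigr]a(x,y)\chi_\varepsilon(x,y)\,dy\,dx.
\]
Factoring the bracket as $(\tilde u(x)-\tilde u(y))(\tilde v(x)-\tilde v(y))$ gives exactly the right-hand side of \eqref{UnweightedGreenEq}.

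There is essentially no obstacle here beyond bookkeeping: the only thing to verify is that the linearity splitting and the variable swap are legitimate, which is guaranteed by the summability hypothesis stated in the proposition (the factor $\chi_\varepsilon$ keeps the kernel away from its diagonal singularity, and the zero extensions of $u,v\in H^s_0(\Omega)$ together with \eqref{kAcond} make the absolute integrability routine for fixed $\varepsilon>0$). No regularity of $a$ or $u,v$ beyond what is assumed is needed, and the principal value is not invoked — that is left for the subsequent passage $\varepsilon\to0$ where the symmetrized form on the right-hand side is what makes the limit tractable in $H^s_0(\Omega)$.
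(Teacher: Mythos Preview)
Your proposal is correct and follows essentially the same symmetrization argument as the paper: swap $(x,y)\mapsto(y,x)$ using $a(x,y)=a(y,x)$ and $\chi_\varepsilon(x,y)=\chi_\varepsilon(y,x)$, then average the two expressions and factor. The only cosmetic difference is that the paper first states the computation for $u,v\in C_c^\infty(\Omega)$ and extends by density, whereas you work directly under the stated summability hypothesis; since that hypothesis is part of the proposition, your route is perfectly fine.
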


\begin{proof} We first show the result for $u,v\in C_c^\infty(\Omega)$, and extend it by density to $u,v\in H^s_0(\Omega)$.
The integral term  \[J:=\int_{\mathbb{R}^d} \int_{\mathbb{R}^d}\tilde{v}(x)(\tilde{u}(x)-\tilde{u}(y))a(x,y)\chi_\varepsilon(x,y) \,dy\,dx\] can also be written in the form, using the symmetry of $a$, \[J=\int_{\mathbb{R}^d}\int_{\mathbb{R}^d}\tilde{v}(y) (\tilde{u}(y)-\tilde{u}(x))a(x,y)\chi_\varepsilon(x,y) \,dx\,dy\] 
Then, by Fubini theorem, \[J=-\int_{\mathbb{R}^d}\int_{\mathbb{R}^d} \tilde{v}(y)(\tilde{u}(x)-\tilde{u}(y))a(x,y)\chi_\varepsilon(x,y) \,dy\,dx.\] Taking the sum of this and the first equation above, we obtain the result \[2J=\int_{\mathbb{R}^d}\int_{\mathbb{R}^d}(\tilde{v}(x)-\tilde{v}(y))(\tilde{u}(x)-\tilde{u}(y))a(x,y)\chi_\varepsilon(x,y) \,dy\,dx.\] 
\end{proof}

\begin{remark}\label{ThmHs}
The bilinear form \eqref{EAdef2.1} is also well-defined for $u,v\in H^s(\mathbb{R}^d)$. Indeed, \begin{multline*}
\lim_{\varepsilon\to0}\int_{\mathbb{R}^d}\int_{\mathbb{R}^d}a_*c_{d,s}^2\frac{v(x)(u(x)-u(y))}{|x-y|^{d+2s}}\chi_\varepsilon(x,y)\,dx\,dy\\\leq \lim_{\varepsilon\to0}\int_{\mathbb{R}^d}\int_{\mathbb{R}^d}v(x)(u(x)-u(y))a(x,y)\chi_\varepsilon(x,y)\,dx\,dy\\\leq \lim_{\varepsilon\to0}\int_{\mathbb{R}^d}\int_{\mathbb{R}^d} a^*c_{d,s}^2\frac{v(x)(u(x)-u(y))}{|x-y|^{d+2s}}\chi_\varepsilon(x,y)\,dx\,dy\end{multline*} with, applying Fubini's theorem, \begin{align*}&\lim_{\varepsilon\to0}\int_{\mathbb{R}^d}\int_{\mathbb{R}^d}\frac{v(x)(u(x)-u(y))}{|x-y|^{d+2s}}\chi_\varepsilon(x,y)\,dy\,dx\\=&\lim_{\varepsilon\to0}\int_{\mathbb{R}^d}\int_{\mathbb{R}^d}\frac{v(y)(u(y)-u(x))}{|x-y|^{d+2s}}\chi_\varepsilon(x,y)\,dy\,dx\\=&\frac{1}{2}\int_{\mathbb{R}^d}\int_{\mathbb{R}^d}\frac{(v(x)-v(y))}{|x-y|^{\frac{d}{2}+s}}\frac{(u(x)-u(y))}{|x-y|^{\frac{d}{2}+s}}\,dy\,dx\end{align*} which is integrable by Cauchy-Schwarz inequality when $u,v\in H^s(\mathbb{R}^d)$.
\end{remark}

It is well-known that $H^s_0(\Omega)$ is complete with respect to its norm, as in \eqref{EquivNorms2}, therefore $\mathcal{E}_a$ is \emph{closed}. Furthermore, it can be shown that $(\mathcal{E}_a,H^s_0(\Omega))$ is \emph{regular}, i.e. $H^s_0(\Omega)\cap C_c(\Omega)$ is dense in $H^s_0(\Omega)$ in the $H^s_0(\Omega)$-norm, and dense in $C_c(\Omega)$ with uniform norm. The first density result follows from the density of compactly supported smooth functions in the space $H^s_0(\Omega)$. The second density result follows since $C_c^\infty(\Omega)\subset H^s_0(\Omega)$ and $C_c^\infty(\Omega)$ is dense in $C_c(\Omega)$ with uniform norm, by considering the mollification of any $C_c(\Omega)$ function.

Recall that a coercive closed (not necessarily symmetric) bilinear form $\mathcal{E}$ on $L^2(\Omega)$ is a \emph{Dirichlet form} (\cite{MaRockner} Proposition I.4.7 and equation (4.7) pages 34--35) if and only if the following property holds:
For each $\varepsilon>0$, there exists a real function $\phi_\varepsilon(t),t\in\mathbb{R}$, such that \begin{equation}\label{Markovian1}\phi_\varepsilon(t)=t,\quad\forall t\in[0,1]\,\quad-\varepsilon\leq\phi_\varepsilon(t)\leq 1+\varepsilon,\quad\forall t\in\mathbb{R},\quad0\leq\phi_\varepsilon(t')-\phi_\varepsilon(t)\leq t'-t\text{ whenever }t<t'\end{equation}
\begin{equation}\label{Markovian2}u\in H^s_0(\Omega)\implies\phi_\varepsilon(u)\in H^s_0(\Omega),\quad\begin{cases}\liminf_{\varepsilon\to0}\mathcal{E}(\phi_\varepsilon(u),u-\phi_\varepsilon(u))\geq0,\\\liminf_{\varepsilon\to0}\mathcal{E}(u-\phi_\varepsilon(u),\phi_\varepsilon(u))\geq0.\end{cases}\end{equation}

A classic example of $\phi_\varepsilon$ is the mollification of a cut-off function (see \cite{FukushimaDirichletForms} Example 1.2.1). Specifically, consider a mollifier such as \[j(x)=\begin{cases}\gamma e^{-\frac{1}{1-|x|^2}}&\text{ for }|x|<1\\0&\text{ otherwise},\end{cases}\] where $\gamma$ is a positive constant such that \[\int_{|x|\leq1} j(x)\,dx=1.\] Set $j_\delta(x)=\delta^{-1}j(\delta^{-1}x)$ for $\delta>0$. For any $\varepsilon>0$, consider the function $\psi_\varepsilon(t)=((-\varepsilon)\vee t)\wedge (1+\varepsilon)$ on $\mathbb{R}$ and set $\phi_\varepsilon(t)=j_\delta*\psi_\varepsilon(t)$ for $0<\delta<\varepsilon$. Then our choice of $\phi_\varepsilon$ satisfies \eqref{Markovian1}. Furthermore, it satisfies  $\phi_\varepsilon(t)=1+\varepsilon$ for $t\in[1+2\varepsilon,\infty)$ and $\phi_\varepsilon(t)=-\varepsilon$ for $t\in(-\infty,-2\varepsilon]$, and $|\phi_\varepsilon(t)|\leq |t|$ with $t\phi_\varepsilon(t)\geq0$.

Moreover, $\phi_\varepsilon(t)$ is infinitely differentiable, so for any $u\in C_c^\infty(\Omega)$, $\phi_\varepsilon(u)\in C_c^\infty(\Omega)$. Since $C_c^\infty(\Omega)$ is dense in $H^s_0(\Omega)$, we can extend by density any results obtained, thereby obtaining $\phi_\varepsilon(u)\in H^s_0(\Omega)$ for all $u\in H^s_0(\Omega)$.

With this $\phi_\varepsilon$, recalling that $a\geq0$ (as in section II.2(d) of \cite{MaRockner}), by Fatou's lemma,  \begin{align*}&\liminf_{\varepsilon\to0}\mathcal{E}_a(\phi_\varepsilon(u),u-\phi_\varepsilon(u))\\\geq&\,P.V.\int_{\mathbb{R}^d}\int_{\mathbb{R}^d}\liminf_{\varepsilon\to0}[\widetilde{u(x)-\phi_\varepsilon(u(x))}][\widetilde{\phi_\varepsilon(u(x))}-\widetilde{\phi_\varepsilon(u(y))}]a(x,y)\,dy\,dx\\\geq&\,P.V.\int_{\mathbb{R}^d}\int_{\mathbb{R}^d}\liminf_{\varepsilon\to0}[\widetilde{u(x)-\phi_\varepsilon(u(x))}][-\widetilde{\phi_\varepsilon(u(y))}]a(x,y)\,dy\,dx\qquad\qquad\qquad\text{ (since }\phi_\varepsilon(u)[u-\phi_\varepsilon(u)]\geq0)
\\\geq&\,P.V.\int_{\mathbb{R}^d}\int_{\mathbb{R}^d}\liminf_{\varepsilon\to0}\Big\{-[\widetilde{u(x)-\phi_\varepsilon(u(x))}](-\varepsilon)\chi_{\{u(x)<0\}} +0\chi_{\{0\leq u(x)\leq1\}}\\&\qquad\qquad+(-\varepsilon)(-1-\varepsilon)\chi_{\{u(x)>1\}}\Big\}a(x,y)\,dy\,dx\\=&\,0\end{align*} since $u-\phi_\varepsilon(u)\leq0$ for $u\leq0$, $\phi_\varepsilon(t)=t$ for $t\in[0,1]$, and $u-\phi_\varepsilon(u)\geq u-1-\varepsilon\geq -\varepsilon$ for $u\geq1$ respectively. 
Similarly, taking $\liminf_{\varepsilon\to0}$ in
\[\mathcal{E}_a(u-\phi_\varepsilon(u),\phi_\varepsilon(u))=P.V.\int_{\mathbb{R}^d}\int_{\mathbb{R}^d}\widetilde{\phi_\varepsilon(u(x))}\left\{\widetilde{[u(x)-\phi_\varepsilon(u(x))]}-\widetilde{[u(y)-\phi_\varepsilon(u(y))]}\right\}a(x,y)\,dx\,dy,\]
we conclude that $\mathcal{E}_a$ is a Dirichlet form. 
Therefore, we obtained the following theorem:

\begin{theorem}\label{DirichletFormThm}
$(\mathcal{E}_a,H^s_0(\Omega))$ is a closed, regular Dirichlet form, which is also bounded and coercive.
\end{theorem}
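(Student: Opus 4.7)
The plan is to consolidate the verifications already laid out in the preceding discussion. \emph{Closedness} follows from completeness of $H^s_0(\Omega)$ in the norm \eqref{EquivNorms2}. \emph{Regularity} follows from $C_c^\infty(\Omega) \subset H^s_0(\Omega) \cap C_c(\Omega)$ being dense in $H^s_0(\Omega)$ by construction and in $C_c(\Omega)$ under the uniform norm by mollification. The \emph{Markovian} conditions \eqref{Markovian1}--\eqref{Markovian2} have just been verified through the mollified cutoff $\phi_\varepsilon$. Combined with coercivity and boundedness, these will yield the Dirichlet-form conclusion via Proposition~I.4.7 of \cite{MaRockner}.

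For \emph{coercivity}, I would split $a = a_s + a_{as}$ into symmetric and antisymmetric parts, where $a_s(x,y) = \tfrac{1}{2}(a(x,y) + a(y,x))$ still satisfies the bounds \eqref{kAcond}. Swapping $x \leftrightarrow y$ in $\mathcal{E}_{a_{as}}(u,u)$ and averaging exhibits the integrand in the form $\tfrac{1}{2}(\tilde u(x)^2 - \tilde u(y)^2) a_{as}(x,y)$, which is antisymmetric under $x \leftrightarrow y$, so the antisymmetric contribution drops out in the principal-value sense and $\mathcal{E}_a(u,u) = \mathcal{E}_{a_s}(u,u)$. Proposition~\ref{UnweightedGreen} applied to $a_s$ then delivers
\[
\mathcal{E}_a(u,u) = \tfrac{1}{2}\int_{\mathbb{R}^d}\!\int_{\mathbb{R}^d} (\tilde u(x)-\tilde u(y))^2 a_s(x,y)\,dy\,dx \;\geq\; \tfrac{a_* c_{d,s}^2}{2}[u]^2_{s,\mathbb{R}^d} \;=\; a_*\|u\|^2_{H^s_0(\Omega)}
\]
by the lower bound in \eqref{kAcond} and the norm identity \eqref{EquivNorms2}.

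For \emph{boundedness}, the same decomposition gives $|\mathcal{E}_{a_s}(u,v)| \leq a^*\|u\|_{H^s_0}\|v\|_{H^s_0}$ directly by Cauchy-Schwarz against the Gagliardo seminorm together with the upper bound $a_s(x,y) \leq a^* c_{d,s}^2/|x-y|^{d+2s}$. For the antisymmetric piece, one first observes that $\int (\tilde u(x)-\tilde u(y))(\tilde v(x)-\tilde v(y)) a_{as}(x,y)\,dy\,dx = 0$ (a symmetric integrand paired with an antisymmetric kernel); subtracting this from the swap-symmetrized representation $\mathcal{E}_{a_{as}}(u,v) = \tfrac12\int(\tilde u(x)-\tilde u(y))(\tilde v(x)+\tilde v(y)) a_{as}(x,y)\,dy\,dx$ produces an equivalent expression which, combined with a weighted Cauchy-Schwarz against $a_s \geq |a_{as}|$, is controllable by $C\,\mathcal{E}_{a_s}(u,u)^{1/2}\mathcal{E}_{a_s}(v,v)^{1/2}$, yielding the sector bound and hence continuity of $\mathcal{E}_a$ on $H^s_0(\Omega) \times H^s_0(\Omega)$.

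The main obstacle will be justifying these rearrangements rigorously, since the antisymmetric-part integrand is only conditionally summable and the P.V.\ machinery interacts delicately with Fubini and sign splittings. My plan is to establish each identity first on $C_c^\infty(\Omega) \times C_c^\infty(\Omega)$, where the second-order cancellation $(u(x)-u(y))(v(x)-v(y)) = O(|x-y|^2)$ tames the $|x-y|^{-d-2s}$ singularity for $s<1$ and yields absolute convergence near the diagonal, and then extend by density using the already-established coercivity lower bound, which ensures that the $H^s_0(\Omega)$-norm controls $\mathcal{E}_a(u,u)^{1/2}$ and conversely.
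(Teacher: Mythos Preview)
Your treatment of closedness, regularity, and the Markovian property matches the paper exactly. The divergence is in how you handle boundedness and coercivity: the paper argues directly from the two-sided bound \eqref{kAcond} and Proposition~\ref{UnweightedGreen}, without decomposing $a$, whereas you split $a=a_s+a_{as}$ and try to kill the antisymmetric contribution.

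There is a genuine error in your coercivity step. You write that the integrand $\tfrac12(\tilde u(x)^2-\tilde u(y)^2)\,a_{as}(x,y)$ is antisymmetric under $x\leftrightarrow y$ and therefore integrates to zero. It is not: both factors change sign under the swap, so their product is \emph{symmetric}, and the $\chi_\varepsilon$-truncated integral has no reason to vanish. In fact, carrying out the swap in the two summands separately shows
\[
\mathcal{E}_{a_{as}}(u,u)=\int_{\mathbb{R}^d}\tilde u(x)^2\Big[\int_{|y-x|>\varepsilon} a_{as}(x,y)\,dy\Big]\,dx,
\]
and the inner bracket is in general neither zero nor even convergent as $\varepsilon\to0$ under hypothesis \eqref{kAcond} alone. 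So $\mathcal{E}_a(u,u)=\mathcal{E}_{a_s}(u,u)$ is not available, and the coercivity constant $a_*$ does not follow from your argument. The same obstruction infects your sector-condition sketch for boundedness: after your subtraction you are left with $\int\!\!\int(\tilde u(x)-\tilde u(y))\,\tilde v(y)\,a_{as}(x,y)\,dy\,dx$, where the factor $\tilde v(y)$ does not vanish on the diagonal, and Cauchy--Schwarz against $|a_{as}|\le a_s$ then produces a divergent $\int\!\!\int \tilde v(y)^2 a_s(x,y)\,dy\,dx$.

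The paper's own proof sidesteps the decomposition and writes the bounds as if $a$ could be replaced pointwise by $a_*c_{d,s}^2|x-y|^{-d-2s}$ or $a^*c_{d,s}^2|x-y|^{-d-2s}$; as written this is also informal, since $\tilde v(x)(\tilde u(x)-\tilde u(y))$ changes sign. Both proofs are clean when $a$ is symmetric (then Proposition~\ref{UnweightedGreen} applies and the integrand in the symmetrized form is controllable in absolute value), but in the genuinely non-symmetric case neither your decomposition nor the paper's one-line inequality closes without an additional structural hypothesis on $a_{as}$ (for instance a cancellation condition of the type $\sup_x\big|\mathrm{P.V.}\!\int a_{as}(x,y)\,dy\big|<\infty$, or an explicit sector bound).
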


\begin{proof}
It remains only to show that the bilinear form $\mathcal{E}_a:H^s_0(\Omega)\times H^s_0(\Omega)\to\mathbb{R}$ is bounded \begin{align*}\mathcal{E}_a(u,v)&\leq a^*c_{d,s}^2\,P.V.\int_{\mathbb{R}^d} \int_{\mathbb{R}^d}\tilde{v}(x)(\tilde{u}(x)-\tilde{u}(y))|x-y|^{-d-2s} \,dy\,dx\\&=\frac{1}{2}a^*c_{d,s}^2\int_{\mathbb{R}^d}\int_{\mathbb{R}^d}(\tilde{u}(x)-\tilde{u}(y))(\tilde{v}(x)-\tilde{v}(y))|x-y|^{-d-2s} \,dy\,dx\\&\leq a^*c_{d,s}^2\left(\frac{1}{2}\int_{\mathbb{R}^d}\int_{\mathbb{R}^d}\frac{(\tilde{u}(x)-\tilde{u}(y))^2}{|x-y|^{d+2s}}\,dx\,dy\right)^{1/2}\left(\frac{1}{2}\int_{\mathbb{R}^d}\int_{\mathbb{R}^d}\frac{(\tilde{v}(x)-\tilde{v}(y))^2}{|x-y|^{d+2s}}\,dx\,dy\right)^{1/2}\\&=a^*\norm{D^su}_{L^2(\Omega)}\norm{D^sv}_{L^2(\Omega)}\\&\leq a^*\norm{u}_{H^s_0(\Omega)}\norm{v}_{H^s_0(\Omega)}\quad\forall u,v\in H^s_0(\Omega)\end{align*} by the Cauchy-Schwarz inequality, and coercive \[\mathcal{E}_a(u,u)\geq\frac{1}{2}a_*c_{d,s}^2\int_{\mathbb{R}^d}\int_{\mathbb{R}^d}(\tilde{u}(x)-\tilde{u}(y))^2|x-y|^{-d-2s} \,dy\,dx= a_*\norm{D^su}^2_{L^2(\mathbb{R}^d)}=a_*\norm{u}^2_{H^s_0(\Omega)}.\] 
\end{proof}

From this theorem, we obtain that $\mathcal{E}_a$ possesses the property of unit contraction. Indeed, by Proposition 4.3 and Theorem 4.4 of \cite{MaRockner}, we have the following corollary.

\begin{corollary}
For the regular Dirichlet form $\mathcal{E}_a$ the following properties hold:
\begin{enumerate}[label=(\alph*)]
\item the unit contraction acts on $\mathcal{E}_a$, i.e. $v:=(0\vee u)\wedge1$ satisfies $\mathcal{E}_a(v,v)\leq \mathcal{E}_a(u,u)$;\item the normal contraction acts on $\mathcal{E}_a$, i.e. suppose $v$ satisfies \[|v(x)-v(y)|\leq|u(x)-u(y)|,\quad x,y\in\mathbb{R}^d\]\[|v(x)|\leq|u(x)|,\quad x\in\mathbb{R}^d,\] then $v\in H^s_0(\Omega)$ and $\mathcal{E}_a(v,v)\leq \mathcal{E}_a(u,u)$.
\end{enumerate}
\end{corollary}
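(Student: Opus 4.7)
The plan is to deduce both contraction properties from Theorem \ref{DirichletFormThm} by tracking the approximating family $\phi_\varepsilon$ built before its statement and passing to the limit. The key observation is that every step already sits inside the Markovian setup that was just verified, so the corollary amounts to a careful limit argument plus bilinear algebra.

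For part (a), I would observe that the unit contraction $v := (0 \vee u) \wedge 1$ is the pointwise limit of the smoothed cut-offs $\phi_\varepsilon(u)$, with $|\phi_\varepsilon(u) - v| \leq C\varepsilon$ uniformly. Since $\phi_\varepsilon$ is $1$-Lipschitz, $|\phi_\varepsilon(u)(x) - \phi_\varepsilon(u)(y)| \leq |u(x)-u(y)|$, so dominated convergence in the Gagliardo seminorm gives $\phi_\varepsilon(u) \to v$ in $H^s_0(\Omega)$. Expanding by bilinearity,
\[\mathcal{E}_a(u,u) = \mathcal{E}_a(\phi_\varepsilon u, \phi_\varepsilon u) + \mathcal{E}_a(\phi_\varepsilon u, u - \phi_\varepsilon u) + \mathcal{E}_a(u - \phi_\varepsilon u, \phi_\varepsilon u) + \mathcal{E}_a(u - \phi_\varepsilon u, u - \phi_\varepsilon u),\]
and taking $\liminf_{\varepsilon \to 0}$: the first term converges to $\mathcal{E}_a(v,v)$ by continuity of $\mathcal{E}_a$ (from the boundedness estimate in Theorem \ref{DirichletFormThm}); the two cross-terms have non-negative $\liminf$ by the Markovian property \eqref{Markovian2}; and the last term is $\geq 0$ by the coercivity $\mathcal{E}_a(w,w) \geq a_*\norm{w}_{H^s_0(\Omega)}^2$. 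Hence $\mathcal{E}_a(u,u) \geq \mathcal{E}_a(v,v)$.

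For part (b), I would first verify $v \in H^s_0(\Omega)$: the bound $|v| \leq |u|$ combined with $u \equiv 0$ outside $\Omega$ gives $v \equiv 0$ outside $\Omega$, while the hypothesis $|v(x)-v(y)| \leq |u(x)-u(y)|$ yields $[v]_{s,\mathbb{R}^d} \leq [u]_{s,\mathbb{R}^d} < \infty$, so by the norm identification \eqref{EquivNorms2} we get $v \in H^s_0(\Omega)$. For the form inequality I would follow Theorem I.4.4 of \cite{MaRockner}: any normal contraction can be written as an absolutely convergent superposition of translated and rescaled unit contractions $(0 \vee \tfrac{u-t}{c}) \wedge 1$ acting on $u$, to which part (a) applies piece-by-piece. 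Bilinearity and passage to the limit (justified by the $H^s_0$-continuity of $\mathcal{E}_a$ and the monotone convergence of the Gagliardo seminorms) then produces $\mathcal{E}_a(v,v) \leq \mathcal{E}_a(u,u)$.

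The main obstacle is this reduction step in part (b): expressing a general normal contraction as a superposition of unit contractions and controlling the cross-terms in the bilinear expansion. Because $\mathcal{E}_a$ is not symmetric, one must separately invoke both inequalities in \eqref{Markovian2} --- which is why the Markovian condition was verified in both orderings in the proof of Theorem \ref{DirichletFormThm} --- and one must check that the superposition converges in $H^s_0(\Omega)$ and not merely in $L^2(\Omega)$, which follows from the kernel bound \eqref{kAcond} coupled with the Gagliardo characterization of the $H^s_0$-norm.
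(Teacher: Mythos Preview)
Your approach matches the paper's: both deduce the corollary from Theorem~\ref{DirichletFormThm} via Proposition~4.3 and Theorem~4.4 of \cite{MaRockner}, and the paper offers no further detail beyond that citation, remarking only that the result follows from $\mathcal{E}_a$ being a Dirichlet form (regularity is not needed). Your reconstruction of part~(a) is correct and is essentially the Ma--R\"ockner argument.

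Your sketch of part~(b), however, has a gap. The statement concerns an \emph{arbitrary} $v$ satisfying the two pointwise bounds, not one of the form $v=T(u)$ for a $1$-Lipschitz map $T:\mathbb{R}\to\mathbb{R}$ with $T(0)=0$. Your ``superposition of translated and rescaled unit contractions $(0\vee\tfrac{u-t}{c})\wedge1$ acting on $u$'' can only produce functions of $u$; if $u(x_1)=u(x_2)$ but $v(x_1)\neq v(x_2)$---which the hypotheses certainly allow---no such superposition can recover $v$. The Ma--R\"ockner proof of the general contraction property does not proceed by superposition; it goes instead through the sub-Markovian resolvent/semigroup, using that on the diagonal only the symmetric part $\tilde{\mathcal{E}}_a(v,v)=\mathcal{E}_a(v,v)$ matters. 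Since you ultimately invoke the same theorem the paper cites, your conclusion stands, but the superposition sketch should be replaced by a direct appeal to that reference.
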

This result in fact follows from the fact that $\mathcal{E}_a$ is a Dirichlet form, and holds even if it is not regular.

Since $\mathcal{E}_a$ is a regular Dirichlet form, it is possible to consider truncations, so we can introduce the positive and negative parts of $v$
\[v^+\equiv v\vee0\quad\text{ and }\quad v^-\equiv-v\vee0=-(v\wedge0),\] and we have the Jordan decomposition of $v$ given by \[v=v^+-v^-\quad\text{ and }\quad |v|\equiv v\vee(-v)=v^++v^-\] and the useful identities \[u\vee v=u+(v-u)^+=v+(u-v)^+,\]\[u\wedge v=u-(u-v)^+=v-(v-u)^+.\] It is well-known that such operations are closed in $H^s_0(\Omega)$ for $0<s\leq1$.

\subsection{The strict T-monotonicity of $\mathcal{E}_a$}
%Assume that $a$ is a (not necessarily symmetric) kernel, and $\Omega\subset\mathbb{R}^d$ is a bounded open set with Lipschitz boundary. We recall \begin{equation}\mathcal{E}_a(u,v)=P.V.\int_{\mathbb{R}^d}\int_{\mathbb{R}^d} \tilde{v}(x)(\tilde{u}(x)-\tilde{u}(y))a(x,y) \,dy\,dx\tag{\ref{EAdef2.1}},\end{equation} with \begin{equation}\langle\mathcal{L}_au,v\rangle:=\left\langle \int_{\mathbb{R}^d} \tilde{v}(x)(\tilde{u}(x)-\tilde{u}(y))a(x,y) \,dy,v\right\rangle\tag{\ref{LAdef}},\end{equation} for $u,v\in H^s_0(\Omega)$, and now we prove the following important property on its strict T-monotonicity. 

Since the assumptions on the kernel $a$ imply, in particular, that it is non-negative, we can easily prove the following important property.

\begin{theorem}
\label{BTmonotone}
$\mathcal{E}_a$ is also strictly T-monotone in the following sense:
$\mathcal{L}_a:H^s_0(\Omega)\to H^{-s}(\Omega)$ defined by \begin{equation}\label{LaEadef}\langle\mathcal{L}_au,v\rangle=\mathcal{E}_a(u,v),\end{equation} satisfies \[\langle\mathcal{L}_av,v^+\rangle>0\quad\forall v\in H^s_0(\Omega)\text{ such that } v^+\neq0.\]
\end{theorem}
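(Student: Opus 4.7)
The plan is to exploit the standard decomposition $v = v^+ - v^-$ with the orthogonality $v^+ v^- = 0$ pointwise (hence $\tilde v^+\tilde v^- = 0$), in order to split $\langle\mathcal{L}_a v, v^+\rangle = \mathcal{E}_a(v, v^+)$ into a coercive ``diagonal'' part plus a non-negative ``interaction'' part. Since the operations $v\mapsto v^{\pm}$ are closed in $H^s_0(\Omega)$ and $\mathcal{E}_a$ is well-defined there, everything stays in the right function space.

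The key pointwise identity, obtained by expanding $\tilde v(x)-\tilde v(y)=\tilde v^+(x)-\tilde v^-(x)-\tilde v^+(y)+\tilde v^-(y)$ and using $\tilde v^+(x)\tilde v^-(x)=0$, reads
\[
\tilde v^+(x)\bigl(\tilde v(x)-\tilde v(y)\bigr) \;=\; \tilde v^+(x)\bigl(\tilde v^+(x)-\tilde v^+(y)\bigr) \;+\; \tilde v^+(x)\,\tilde v^-(y).
\]
Plugging this into \eqref{EAdef2.1} formally yields
\[
\mathcal{E}_a(v,v^+) \;=\; \mathcal{E}_a(v^+,v^+) \;+\; \int_{\mathbb{R}^d}\!\!\int_{\mathbb{R}^d} \tilde v^+(x)\,\tilde v^-(y)\,a(x,y)\,dy\,dx.
\]
The second integral is non-negative since $a\geq 0$, and is finite by Tonelli applied to the bound $a(x,y)\leq a^* c_{d,s}^2|x-y|^{-d-2s}$ together with $\tilde v^\pm\in H^s_0(\Omega)\hookrightarrow L^{2^*}(\Omega)$ (so in particular no principal value is needed for this piece). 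For the first piece, Theorem \ref{DirichletFormThm} gives the coercivity
\[
\mathcal{E}_a(v^+,v^+) \;\geq\; a_*\,\|v^+\|_{H^s_0(\Omega)}^2,
\]
which is strictly positive whenever $v^+\neq 0$: indeed $v^+\in H^s_0(\Omega)$ vanishes outside the bounded set $\Omega$, so the seminorm $[\,\cdot\,]_{s,\mathbb{R}^d}$ vanishes only for the zero function. Combining both estimates yields $\langle \mathcal{L}_a v, v^+\rangle > 0$, which is the claim.

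The only technical point to watch is the legitimacy of splitting the principal-value integral defining $\mathcal{E}_a(v,v^+)$. I would justify it by working first with $v\in C_c^\infty(\Omega)$, where the identity above may be integrated against $a(x,y)\chi_\varepsilon(x,y)$ and then the limit $\varepsilon\to 0$ can be taken termwise: the diagonal piece converges by Remark \ref{ThmHs} applied to $v^+\in H^s(\mathbb{R}^d)$, while the interaction piece converges monotonically by Fatou/monotone convergence since its integrand is non-negative. The extension to general $v\in H^s_0(\Omega)$ then follows by density together with the continuity of $v\mapsto v^+$ in $H^s_0(\Omega)$ and the boundedness of $\mathcal{E}_a$. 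This density/PV bookkeeping is the only non-trivial step; once it is in place, the strict inequality is immediate from the two non-negative pieces, one of which is strictly positive by coercivity.
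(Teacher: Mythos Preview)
Your argument is correct and is essentially the same as the paper's: both split $\mathcal{E}_a(v,v^+)=\mathcal{E}_a(v^+,v^+)-\mathcal{E}_a(v^-,v^+)$, observe that the cross term $\mathcal{E}_a(v^-,v^+)=-\int\!\!\int \tilde v^+(x)\tilde v^-(y)\,a(x,y)\,dy\,dx\le 0$ from $v^+v^-=0$ and $a\ge 0$, and then invoke coercivity on $\mathcal{E}_a(v^+,v^+)$. One small remark: the finiteness of the interaction integral is not really a consequence of the Sobolev embedding into $L^{2^*}$ but rather of the pointwise bound $\tilde v^+(x)\tilde v^-(y)\le |\tilde v^+(x)-\tilde v^+(y)|\,|\tilde v^-(x)-\tilde v^-(y)|$ together with $v^\pm\in H^s_0(\Omega)$; your density argument covers this anyway, so the proof stands.
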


\begin{proof}
%Since $a$ is strictly positive, i.e. $a(x,y)|x-y|^{d+2s}\geq a_*c_{d,s}^2$ with $a_*>0$, therefore, \[\mathcal{E}_a(v,v)\geq\frac{1}{2}a_*c_{d,s}^2\int_{\mathbb{R}^d}\int_{\mathbb{R}^d}(\tilde{v}(x)-\tilde{v}(y))^2|x-y|^{-d-2s} \,dy\,dx= a_*\int_{\mathbb{R}^d} |D^sv|^2\quad\forall v\in H^s_0(\Omega).\] 

$\mathcal{L}_a$ is strictly T-monotone because \begin{align*}\mathcal{E}_a(v^-,v^+)=&\,P.V.\int_{\mathbb{R}^d}\int_{\mathbb{R}^d} \tilde{v}^+(x)(\tilde{v}^-(x)-\tilde{v}^-(y))a(x,y)\,dx\,dy\\=&-P.V.\int_{\mathbb{R}^d}\int_{\mathbb{R}^d} \tilde{v}^+(x)\tilde{v}^-(y)a(x,y)\,dx\,dy\\\leq&\,0,\end{align*} since $v^+(x)v^-(x)=0$ as $v^+$ and $v^-$ cannot both be nonzero at the same point $x$, and $v^+,v^-,a\geq0$. Therefore, since $a(x,y)|x-y|^{d+2s}\geq a_*c_{d,s}^2$ with $a_*>0$, \begin{align*}\langle\mathcal{L}_Av,v^+\rangle&=\mathcal{E}_a(v,v^+)\\&=\mathcal{E}_a\left(v^+,v^+\right)-\mathcal{E}_a\left(v^-,v^+\right)\\&\geq a_*\int_{\mathbb{R}^d}|D^sv^+|^2\end{align*} which is strictly greater than 0 if $v^+\neq 0$.
\end{proof}

\subsection{The fractional bilinear form as a nonlocal bilinear form}

In this section, we consider the linear operator $\tilde{\mathcal{L}}_A$ defined with the fractional derivatives $D^s$ by the continuous fractional bilinear form \begin{equation}\tag{\ref{LAtildedef}}\langle\tilde{\mathcal{L}}_Au,v\rangle:=\int_{\mathbb{R}^d} A(x)D^su\cdot D^sv \,dx,\quad\forall u,v\in H^s_0(\Omega)\end{equation} for a matrix $A$ with bounded and measurable coefficients. The integral in \eqref{LAtildedef} is well defined and we are going to show that this bilinear form can be rewritten with a measurable kernel $k_A:\mathbb{R}^d\times\mathbb{R}^d, d\geq1$, as \begin{equation}\label{EAtildedef2.1}\mathcal{E}_{k_A}(u,v):=P.V.\int_{\mathbb{R}^d}\int_{\mathbb{R}^d} \tilde{v}(x)(\tilde{u}(x)-\tilde{u}(y))k_A(x,y) \,dy\,dx,\end{equation} where $\tilde{u},\tilde{v}$ are the zero extensions of $u,v\in C_c^\infty(\Omega)$ to $\Omega^c$. %The derived $k_A$ satisfies some coercivity conditions, under certain assumptions on $A$. This gives a partial answer to Open Problem 1.10 of \cite{SS2}, and will also give us many results that can be applied to the distributional Riesz fractional derivative. Indeed, under these conditions on $k_A$, $(\mathcal{E}_a,H^s_0(\Omega))$ is a regular (not necessarily symmetric) Dirichlet form as we showed in Section 2.1, so the properties of Dirichlet forms and the partial answer to Open Problem 1.9 of \cite{SS2} can subsequently be applied to the fractional bilinear form $\tilde{\mathcal{L}}_A$.

\begin{theorem}
\label{OpenPb10Hs}
Given a matrix $A:\mathbb{R}^d\to\mathbb{R}^{d\times d}$ with bounded and measurable coefficients, there exists a kernel $k_A(x,y)$ independent of $u,v$ satisfying \begin{equation}\int_{\mathbb{R}^d} A(x)D^s u(x)\cdot D^sv(x)\,dx=P.V.\int_{\mathbb{R}^d}\int_{\mathbb{R}^d} v(x)(u(x)-u(y))k_A(x,y) \,dy\,dx\label{OpenPb10HsEq}\end{equation} for all $u,v\in C_c^\infty(\mathbb{R}^d)$, where $k_A(x,y)$ is given by \begin{equation}\label{kAform}k_A(x,y)=c_{d,s}^2\,P.V.\int_{\mathbb{R}^d} A(z) \frac{y-z}{|y-z|^{d+s+1}}\cdot \frac{z-x}{|z-x|^{d+s+1}}\,dz\quad\text{ for }x\neq y.\end{equation}
\end{theorem}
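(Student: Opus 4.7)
The plan is to derive \eqref{OpenPb10HsEq} by substituting the principal value integral representations of $D^su$ and $D^sv$ into the bilinear form $\int A(z)D^su\cdot D^sv\,dz$ and applying Fubini's theorem to interchange the $z$-integration with the outer integrations, thereby identifying the inner $z$-integral as $-k_A(x,y)/c_{d,s}^2$. The starting point is the first form of the $s$-gradient given in the introduction, which via the change of variables $w=y-z$ yields
$$D^su(z)=c_{d,s}\,\mathrm{P.V.}\!\int_{\mathbb{R}^d} u(y)\,\frac{y-z}{|y-z|^{d+s+1}}\,dy,$$
and analogously $D^sv(z)=c_{d,s}\,\mathrm{P.V.}\int v(x)\,\tfrac{x-z}{|x-z|^{d+s+1}}\,dx$. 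Expanding componentwise $A(z)D^su\cdot D^sv=\sum_{i,j}A_{ij}(z)(D^su)_j(D^sv)_i$, swapping the order of integration, and using the sign flip $\tfrac{x-z}{|x-z|^{d+s+1}}=-\tfrac{z-x}{|z-x|^{d+s+1}}$ to match \eqref{kAform}, I obtain
$$\int A(z)D^su\cdot D^sv\,dz=-\int\!\int u(y)v(x)\,k_A(x,y)\,dy\,dx.$$

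To justify the Fubini interchange rigorously in the presence of the principal values, I would work with truncations $D^s_\varepsilon u,D^s_\delta v$ where the inner integrals are restricted to $\{|y-z|>\varepsilon\}$ and $\{|x-z|>\delta\}$ respectively. For $u,v\in C_c^\infty(\mathbb{R}^d)$, these truncated objects are uniformly bounded and converge uniformly on compact sets to $D^su,D^sv$ (which are themselves smooth and rapidly decreasing, as recalled in the paper), so Lebesgue dominated convergence allows passage to the limit $\varepsilon,\delta\to 0$ in the outer $z$-integral. For each fixed $\varepsilon,\delta>0$ the truncated triple integral is absolutely convergent, Fubini applies directly, and letting $\varepsilon,\delta\to 0$ in the inner $z$-slot recovers the P.V.\ defining $k_A(x,y)$.

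To match the resulting identity with \eqref{OpenPb10HsEq}, the key ingredient is the vanishing identity $\mathrm{P.V.}\int_{\mathbb{R}^d} k_A(x,y)\,dy=0$ for every $x$, which follows by another Fubini interchange together with the antisymmetry of $y\mapsto\tfrac{y-z}{|y-z|^{d+s+1}}$ about $y=z$: $\int_{|y-z|>\varepsilon}\tfrac{y-z}{|y-z|^{d+s+1}}\,dy=0$ for every $\varepsilon>0$ by odd symmetry. This identity eliminates the $u(x)$ contribution inside the difference $u(x)-u(y)$, yielding
$$\mathrm{P.V.}\!\int\!\!\int v(x)(u(x)-u(y))k_A(x,y)\,dy\,dx=-\!\int\!\!\int v(x)u(y)k_A(x,y)\,dy\,dx,$$
which agrees with the expression derived above, establishing \eqref{OpenPb10HsEq}.

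The main technical obstacle is the pointwise well-definedness of $k_A(x,y)$ as a P.V.\ integral for $x\neq y$ when $A$ is only bounded and measurable: the integrand has singular parts of order $|z-x|^{-(d+s)}$ and $|z-y|^{-(d+s)}$ at the two poles, and the odd-symmetry cancellation that makes the P.V.\ finite cannot be invoked directly without freezing $A$ at the pole; this necessitates a Calder\'on--Zygmund-type bound on the truncations. A secondary concern is that $k_A(x,y)$ is expected to blow up like $|x-y|^{-(d+2s)}$ on the diagonal (as in the case $A=I$, which must recover the fractional Laplacian kernel in view of \eqref{EquivNorms}), so the outer principal value in \eqref{OpenPb10HsEq} on $\{|x-y|>\eta\}$ is essential, its convergence relying on the cancellation from $u(x)-u(y)$ and the smoothness of $u,v$.
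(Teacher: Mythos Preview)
Your strategy---truncate, apply Fubini, exploit the odd-symmetry cancellation $\int_{|y-z|>\varepsilon}\frac{y-z}{|y-z|^{d+s+1}}\,dy=0$, and identify the inner $z$-integral as $k_A$---is the same as the paper's. The gap is in the \emph{order} in which you take limits versus perform algebraic manipulations. After Fubini at the truncated level you let $\varepsilon,\delta\to0$ in the inner slot and write the intermediate identity
\[
\int A(z)D^su\cdot D^sv\,dz=-\int\!\!\int u(y)v(x)\,k_A(x,y)\,dy\,dx,
\]
but this right-hand side is not absolutely convergent: $k_A(x,y)$ blows up like $|x-y|^{-(d+2s)}$ on the diagonal (as you yourself note), and without the factor $u(x)-u(y)$ there is no cancellation to make even the P.V.\ converge. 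Your subsequent vanishing identity $\mathrm{P.V.}\!\int k_A(x,y)\,dy=0$, which you use to insert $u(x)$, relies on ``another Fubini interchange'' in a $(y,z)$-integral that is again not absolutely convergent, so it is not justified as stated.

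The paper repairs both issues by introducing a \emph{third} truncation $\chi_\eta$ on $\{|x-y|>\eta\}$ from the very start and carrying out all algebra at the fully truncated level. Starting from the Lebesgue-integrable representation $D^su(z)=c_{d,s}\int(u(y)-u(z))\frac{y-z}{|y-z|^{d+s+1}}\,dy$, it expands $(v(x)-v(z))$ and kills the $v(z)$ term via $\int\frac{(x-z)\chi_\varepsilon}{|x-z|^{d+s+1}}\,dx=0$; then it adds and subtracts $u(x)$ and kills the resulting $u(x)-u(z)$ term via $\int\frac{(y-z)\chi_\delta}{|y-z|^{d+s+1}}\,dy=0$ (this is exactly your odd-symmetry trick, but applied while the truncations are still in place rather than to $k_A$ itself). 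Only after reaching the form with $(u(y)-u(x))v(x)$ does the paper apply Fubini and take the iterated limit: first $\varepsilon,\delta\to0$ inside to define $k_A(x,y)$ for $x\neq y$, then $\eta\to0$ outside to recover the P.V.\ in \eqref{OpenPb10HsEq}. Reordering your argument in this way closes the gap.
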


\begin{proof}

Expanding the fractional bilinear form, we have, setting for simplicity $\int_{\mathbb{R}^d}$ as $\int$, \begin{align*}
&\,\int  A(z)D^su(z)\cdot D^sv(z)\,dz\\=&\, c_{d,s}^2\int A(z)\left[\int(u(y)-u(z)) \frac{y-z}{|y-z|^{d+s+1}}\,dy\right]\cdot\left[\int (v(x)-v(z))\frac{(x-z)}{|z-x|^{d+s+1}}\,dx\right]\,dz\\=&\, c_{d,s}^2\iiint\lim_{\varepsilon,\delta,\eta\to0}\left[A(z)(u(y)-u(z)) \frac{(y-z)\chi_\delta(y,z)}{|y-z|^{d+s+1}}\cdot (v(x)-v(z))\frac{(x-z)\chi_\varepsilon(x,z)}{|z-x|^{d+s+1}}\chi_\eta(x,y)\right]\,dx\,dy\,dz\\=&\, c_{d,s}^2\lim_{\varepsilon,\delta,\eta\to0}\iiint A(z)(u(y)-u(z)) \frac{(y-z)\chi_\delta(y,z)}{|y-z|^{d+s+1}}\cdot (v(x)-v(z))\frac{(x-z)\chi_\varepsilon(x,z)}{|z-x|^{d+s+1}}\chi_\eta(x,y)\,dx\,dy\,dz,\end{align*}where $\chi_\eta(x,y)$ is the characteristic function on the set $\{|x-y|>\eta\}$ and similarly defined for $\chi_\varepsilon$ and $\chi_\delta$.
The limit can be exchanged with the integral by the Fubini and Lebesgue theorems because the integrand is Lebesgue integrable. Therefore, \begin{align}
&\,\int  A(z)D^su(z)\cdot D^sv(z)\,dz\nonumber\\=&\, c_{d,s}^2\lim_{\varepsilon,\delta,\eta\to0}\iiint A(z)(u(y)-u(z)) \frac{(y-z)\chi_\delta(y,z)}{|y-z|^{d+s+1}}\cdot v(x)\frac{(x-z)\chi_\varepsilon(x,z)}{|z-x|^{d+s+1}}\chi_\eta(x,y)\,dx\,dy\,dz\nonumber\\&\, - c_{d,s}^2\lim_{\varepsilon,\delta,\eta\to0}\iiint A(z)(u(y)-u(z)) \frac{(y-z)\chi_\delta(y,z)}{|y-z|^{d+s+1}}\cdot v(z)\frac{(x-z)\chi_\varepsilon(x,z)}{|z-x|^{d+s+1}}\chi_\eta(x,y)\,dx\,dy\,dz\nonumber\\=&\, c_{d,s}^2\lim_{\varepsilon,\delta,\eta\to0}\iiint A(z)(u(y)-u(z)) \frac{(y-z)\chi_\delta(y,z)}{|y-z|^{d+s+1}}\cdot v(x)\frac{(x-z)\chi_\varepsilon(x,z)}{|z-x|^{d+s+1}}\chi_\eta(x,y)\,dx\,dy\,dz\nonumber\\&\, - c_{d,s}^2\lim_{\varepsilon,\delta\to0}\iint A(z)(u(y)-u(z)) \frac{(y-z)\chi_\delta(y,z)}{|y-z|^{d+s+1}}\cdot v(z)\left[\int\lim_{\eta\to0}\frac{(x-z)\chi_\varepsilon(x,z)}{|z-x|^{d+s+1}}\chi_\eta(x,y)\,dx\right]\,dy\,dz\nonumber\\=&\, c_{d,s}^2\lim_{\varepsilon,\delta,\eta\to0}\iiint A(z)(u(y)-u(z)) \frac{(y-z)\chi_\delta(y,z)}{|y-z|^{d+s+1}}\chi_\eta(x,y)\cdot v(x)\frac{(x-z)\chi_\varepsilon(x,z)}{|z-x|^{d+s+1}}\,dx\,dy\,dz,\label{expandedform}\end{align} using the fact that $\int\frac{(x-z)\chi_\varepsilon(x,z)}{|x-z|^{d+s+1}}\,dx=0$ for all $\varepsilon>0$. Note that we can exchange the limit in $\eta$ with the integrals because the functions are Lebesgue integrable as $|x-y|\to0$.

Adding and subtracting $u(x)$, as $u,v\in C_c^\infty(\mathbb{R}^d)$, we have 
\begin{align}
&\,\int  A(z)D^su(z)\cdot D^sv(z)\,dz\nonumber\\=&\, c_{d,s}^2\lim_{\varepsilon,\delta,\eta\to0}\iiint A(z)[(u(y)-u(x))+(u(x)-u(z))] \frac{(y-z)\chi_\delta(y,z)}{|y-z|^{d+s+1}}\cdot v(x)\frac{(x-z)\chi_\varepsilon(x,z)}{|z-x|^{d+s+1}}\chi_\eta(x,y)\,dx\,dy\,dz\nonumber\\=&\, c_{d,s}^2\lim_{\varepsilon,\delta,\eta\to0}\iiint A(z)(u(y)-u(x)) \frac{(y-z)\chi_\delta(y,z)}{|y-z|^{d+s+1}}\cdot v(x)\frac{(x-z)\chi_\varepsilon(x,z)}{|z-x|^{d+s+1}}\chi_\eta(x,y)\,dx\,dy\,dz\nonumber\\&\,+ c_{d,s}^2\lim_{\varepsilon,\delta,\eta\to0}\int A(z) \left[\int\frac{(y-z)\chi_\delta(y,z)}{|y-z|^{d+s+1}}\cdot\left[\int(u(x)-u(z))v(x)\frac{(x-z)\chi_\varepsilon(x,z)}{|z-x|^{d+s+1}}\chi_\eta(x,y)\,dx\right]\,dy\right]\,dz\nonumber\\=&\, c_{d,s}^2\lim_{\varepsilon,\delta,\eta\to0}\iiint A(z)(u(y)-u(x)) \frac{(y-z)\chi_\delta(y,z)}{|y-z|^{d+s+1}}\cdot v(x)\frac{(x-z)\chi_\varepsilon(x,z)}{|z-x|^{d+s+1}}\chi_\eta(x,y)\,dx\,dy\,dz\nonumber\\&\,+ c_{d,s}^2\lim_{\varepsilon,\delta\to0}\int A(z) \left[\int\frac{(y-z)\chi_\delta(y,z)}{|y-z|^{d+s+1}}\cdot\left[\int\lim_{\eta\to0}(u(x)-u(z))v(x)\frac{(x-z)\chi_\varepsilon(x,z)}{|z-x|^{d+s+1}}\chi_\eta(x,y)\,dx\right]\,dy\right]\,dz\nonumber\\=&\, c_{d,s}^2\lim_{\varepsilon,\delta,\eta\to0}\iiint A(z)(u(y)-u(x)) \frac{(y-z)\chi_\delta(y,z)}{|y-z|^{d+s+1}}\cdot v(x)\frac{(x-z)\chi_\varepsilon(x,z)}{|z-x|^{d+s+1}}\chi_\eta(x,y)\,dx\,dy\,dz,\label{tripleintegral}\end{align} 
where we make use of the fact that $\int\frac{(y-z)\chi_\delta(y,z)}{|y-z|^{d+s+1}}\cdot f(z)\,dy=0$ for all $\delta>0$ for any finite function $f(z)$. Once again, the limit in $\eta$ can be interchanged with triple integrals, because the factor $\frac{(y-z)\chi_\delta(y,z)}{|y-z|^{d+s+1}}$ is integrable for $\delta>0$. Also, the function \[f(z):=\lim_{\eta\to0}\int(u(x)-u(z))\chi_\eta(x,y)v(x)\frac{(x-z)\chi_\varepsilon(x,z)}{|z-x|^{d+s+1}}\,dx\] is a finite function of $z$, because the integrand has a singularity only at $x=z$ and we have introduced the characteristic function $\chi_\varepsilon(x,z)$. Furthermore, the Lipschitz continuity of $u$ guarantees that the singularity is removable, since we have the factor $u(x)-u(z)$. This is also the reason why we used the first expression for the expansion of $D^su$, rather than the second one, which will only give us a factor of $u(x)$ when we add and subtract $u(x)$. Therefore, we can take the limit $\eta\to0$, so that it is just a function of $z$.

Next, we apply Fubini's theorem, since the integrand is Lebesgue integrable for fixed $\varepsilon,\delta,\eta>0$. Therefore, 
\begin{multline*}\int  A(z)D^su(z)\cdot D^sv(z)\,dz\\=c_{d,s}^2\lim_{\varepsilon,\delta,\eta\to0}\iiint A(z)(u(y)-u(x)) \frac{(y-z)\chi_\delta(y,z)}{|y-z|^{d+s+1}}\cdot v(x)\frac{(x-z)\chi_\varepsilon(x,z)}{|z-x|^{d+s+1}}\chi_\eta(x,y)\,dz\,dy\,dx.\end{multline*} 

Finally, regarding this limit as a double limit, in $\eta$ and separately in $\varepsilon$ and $\delta$, which exists, we can consider the iterated limit in the following form \begin{multline*}\int  A(z)D^su(z)\cdot D^sv(z)\,dz\\=\lim_{\eta\to0}\iint(u(x)-u(y))v(x)\left[c_{d,s}^2\lim_{\varepsilon,\delta\to0}\int A(z) \frac{(y-z)\chi_\delta(y,z)}{|y-z|^{d+s+1}}\cdot \frac{(z-x)\chi_\varepsilon(x,z)}{|z-x|^{d+s+1}}\,dz\right]\chi_\eta(x,y)\,dy\,dx\end{multline*} where we may interpret the term in the parentheses as the Cauchy principal value about the singularities $z=x$ and $z=y$, i.e. as a function in $x,y$ defined for $x\neq y$, by \begin{equation}\tag{\ref{kAform}}k_A(x,y)=c_{d,s}^2\,P.V.\int A(z) \frac{y-z}{|y-z|^{d+s+1}}\cdot \frac{z-x}{|z-x|^{d+s+1}}\,dz.\end{equation}
\end{proof}

% \begin{remark}
% Since \[\int_{\mathbb{R}^d} A(x)D^s u(x)\cdot D^sv(x)\,dx\leq\norm{A}_{L^\infty(\mathbb{R}^d)}\norm{D^su}_{L^2(\mathbb{R}^d)}\norm{D^su}_{L^2(\mathbb{R}^d)},\] this result can then be extended by density to $u,v\in H^s_0(\Omega)$ or $u,v\in H^s(\mathbb{R}^d)$.
% \end{remark}

\begin{remark}Note that in general, $k_A$ is neither translation nor rotation invariant, unlike the case for the fractional Laplacian. In particular, $k_A$ may not have the form $j(x-y)|x-y|^{-d-2s}$. Therefore, the kernel $k_A$ may have relevance for non-homogeneous, non-isotropic and nonlocal problems. Even in the case for $A$ being the constant coefficient matrix when $k_A$ is translation invariant, it may not be rotation invariant, unless if $A$ is a constant multiple of the identity matrix. % orthogonal. %The kernel $k_\mathbbm{I}$ is given in Theorem 4.1 of \cite{DElia2020unified} with $A=\mathbbm{I}$ being the identity. 
\end{remark}

\begin{remark}\label{kAcoer}
Suppose that the matrix $A$ is given by $\alpha\mathbbm{I}$ for a strictly positive finite constant $\alpha$ and the identity matrix $\mathbbm{I}$. Then, by \eqref{EquivNorms} and Proposition \ref{UnweightedGreen}, $\tilde{\mathcal{L}}_{\alpha\mathbbm{I}}$ defined by \eqref{LAtildedef} can also be defined by a symmetric bilinear form as in \eqref{OpenPb10HsEq} with a kernel $\alpha$ given by \[\alpha(x,y)=\frac{\alpha c_{d,s}^2}{|x-y|^{d+2s}}\] which is, up to a constant, the kernel of the fractional Laplacian and satisfies \eqref{kAcond}. % with the bounds \begin{equation}\label{kAcondSpecifA}a_*c_{d,s}^2\tilde{k}(\bm{e})\leq k_{c\mathbbm{I}}(x,y)|x-y|^{d+2s}\leq a^*c_{d,s}^2\tilde{k}(\bm{e})\quad\forall x,y\in\mathbb{R}^d,x\neq y\end{equation} for the positive constant \[\tilde{k}(\bm{e})=\int_{\mathbb{R}^d}\frac{\bm{e}-z}{|\bm{e}-z|^{d+s+1}}\cdot \frac{z}{|z|^{d+s+1}}\,dz.\] %accounts for perturbations as well

However, we observe that this representation may not be unique, and $k_{\alpha\mathbbm{I}}$ may not be equal to $\alpha c_{d,s}^2|x-y|^{-d-2s}$. Indeed, consider an unbounded nonzero $L^2$-integrable function $h(x):\mathbb{R}^d\to\mathbb{R}$ which integrates to 0 over $\mathbb{R}^d$ and has support outside $\Omega$. Let $a(x,y)$ be a kernel satisfying \eqref{kAcond} and define \[\tilde{a}(x,y)=a(x,y)+h(x)h(y),\] which is possible since the kernel is defined over all $(\mathbb{R}^d\times\mathbb{R}^d)\backslash\{x=y\}$ and the integrability of $h$ means that $\mathcal{L}_{\tilde{a}}$ is well defined. Since $\int h=0$ by the construction of $h$ and, for any $u,v\in C_c^\infty(\Omega)$, $\int \tilde{u}h=0$ since they have disjoint supports, we have \begin{align*}\langle\mathcal{L}_{\tilde{a}}u,v\rangle=&\,P.V.\int_{\mathbb{R}^d}\int_{\mathbb{R}^d}\tilde{v}(x)(\tilde{u}(x)-\tilde{u}(y))(a(x,y)+h(x)h(y))\,dy\,dx\\=&\,P.V.\int_{\mathbb{R}^d}\int_{\mathbb{R}^d}\tilde{v}(x)(\tilde{u}(x)-\tilde{u}(y))a(x,y)\,dy\,dx\\&+P.V.\int_{\mathbb{R}^d}\int_{\mathbb{R}^d}\tilde{u}(x)\tilde{v}(x)h(x)h(y)\,dy\,dx-P.V.\int_{\mathbb{R}^d}\int_{\mathbb{R}^d}\tilde{u}(y)\tilde{v}(x)h(x)h(y)\,dy\,dx\\=&\,P.V.\int_{\mathbb{R}^d}\int_{\mathbb{R}^d}\tilde{v}(x)(\tilde{u}(x)-\tilde{u}(y))a(x,y)\,dy\,dx\\&+P.V.\int_{\mathbb{R}^d}\tilde{u}(x)\tilde{v}(x)h(x)\left[\int_{\mathbb{R}^d}h(y)\,dy\right]\,dx-P.V.\int_{\mathbb{R}^d}\tilde{v}(x)h(x)\left[\int_{\mathbb{R}^d}\tilde{u}(y)h(y)\,dy\right]\,dx\\=&\,P.V.\int_{\mathbb{R}^d}\int_{\mathbb{R}^d}\tilde{v}(x)(\tilde{u}(x)-\tilde{u}(y))a(x,y)\,dy\,dx=\langle\mathcal{L}_au,v\rangle\end{align*}

This example gives a class of non-uniqueness for the representation of the kernel. There may be more similar classes, and it will be interesting to know a characterization for the equivalent class of kernels. Furthermore, even if the kernel $a$ satisfies the compatibility condition \eqref{kAcond}, since $h$ may change sign and may be unbounded, our construction of the kernel $\tilde{a}$ may not satisfy the compatibility condition \eqref{kAcond}, nor the weaker compatibility conditions \begin{equation}\label{kernelcondgeneral}\begin{cases}a_*\leq a(x,y)|x-y|^{d+2s}\leq a^*,&\quad|x-y|\leq1\\a(x,y)\leq M|x-y|^{-d-s'},&\quad|x-y|>1\end{cases}\end{equation} for some $a_*,a^*,M,s'>0$, as given in equation (1.11) of \cite{SS2}.
\end{remark}

However, \eqref{kAcond} is not satisfied for the kernel $k_A$ for a general matrix $A$. Indeed, we can construct a numerical counterexample as follows.
\begin{example}\label{countereg}In $d=1$, suppose $s=0.8$, and consider the matrix $A=\alpha(x)$ where $\alpha(x)=0.01+50H(x)$ for the smooth approximation $H$ of the characteristic function of the interval $[1,1.5]$, such that $H(x)=1$ in $[1,1.5]$ and less than $0.0001$ outside $[0.9,1.6]$. Then \[k_{A}(-0.5,0.5)=0.01c_{1,0.8}^2P.V.\int_{\mathbb{R}}\frac{0.5-z}{|0.5-z|^{2.8}} \frac{z+0.5}{|z+0.5|^{2.8}}\,dz+50c_{1,0.8}^2P.V.\int_{\mathbb{R}}H(z) \frac{0.5-z}{|0.5-z|^{2.8}} \frac{z+0.5}{|z+0.5|^{2.8}}\,dz.\] Observe that the function \[\kappa_{1,0.8}(-0.5,0.5,z):=\frac{0.5-z}{|0.5-z|^{2.8}} \frac{z+0.5}{|z+0.5|^{2.8}}\] has the shape \begin{center}\includegraphics[width=0.4\textwidth]{graph.png}\\{\tiny Created in Symbolab}\end{center} is integrable but not absolutely integrable, and is strictly increasing and strictly negative in the interval $[0.9,1.6]$ with values (computed in Wolfram Alpha) \[\kappa_{1,0.8}(-0.5,0.5,0.9)=-2.839,\quad\kappa_{1,0.8}(-0.5,0.5,1.5)=-0.287,\quad\int_{\mathbb{R}}\kappa_{1,0.8}(-0.5,0.5,z)\,dz\approx30<100.\] Then, computing \eqref{kAform} for  \begin{align*}k_{A}(-0.5,0.5)&=0.01c_{1,0.8}^2P.V.\int_{\mathbb{R}}\frac{0.5-z}{|0.5-z|^{2.8}} \frac{z+0.5}{|z+0.5|^{2.8}}\,dz+50c_{1,0.8}^2P.V.\int_{\mathbb{R}}H(z) \frac{0.5-z}{|0.5-z|^{2.8}} \frac{z+0.5}{|z+0.5|^{2.8}}\,dz\\&<0.01c_{1,0.8}^2(100)+50c_{1,0.8}^2P.V.\int_1^{1.5} \frac{0.5-z}{|0.5-z|^{2.8}} \frac{z+0.5}{|z+0.5|^{2.8}}\,dz\\&\quad+50(0.0001)c_{1,0.8}^2P.V.\int_{\mathbb{R}\backslash[0.9,1.6]} \frac{0.5-z}{|0.5-z|^{2.8}} \frac{z+0.5}{|z+0.5|^{2.8}}\,dz\\&=c_{1,0.8}^2+50c_{1,0.8}^2P.V.\int_1^{1.5} \frac{0.5-z}{|0.5-z|^{2.8}} \frac{z+0.5}{|z+0.5|^{2.8}}\,dz\\&\quad+0.005c_{1,0.8}^2\left(P.V.\int_{\mathbb{R}} \frac{0.5-z}{|0.5-z|^{2.8}} \frac{z+0.5}{|z+0.5|^{2.8}}\,dz-P.V.\int_{0.9}^{1.6} \frac{0.5-z}{|0.5-z|^{2.8}} \frac{z+0.5}{|z+0.5|^{2.8}}\,dz\right)\\&<c_{1,0.8}^2+50c_{1,0.8}^2P.V.\int_1^{1.5} \kappa_{1,0.8}(-0.5,0.5,1.5)\,dz\\&\quad+0.005c_{1,0.8}^2\left(P.V.\int_{\mathbb{R}} \kappa_{1,0.8}(-0.5,0.5,z)\,dz-P.V.\int_{0.9}^{1.6} \kappa_{1,0.8}(-0.5,0.5,0.9)\,dz\right)\\&< c_{1,0.8}^2+50c_{1,0.8}^2(-0.28)(0.5)+0.005c_{1,0.8}^2(100-(-2.84)(1.6-0.9))=-5.49c_{1,0.8}^2<0\end{align*} which contradicts \eqref{kAcond}. Compare with Open Problem 1.10 of \cite{SS2}.\end{example}\qed

The Theorem \ref{OpenPb10Hs} and the last two remarks were inspired by the Open Problem 1.10 of \cite{SS2}, which asked if, given a symmetric matrix $A$ satisfying \eqref{Acoer}, it is possible to find a kernel $k_A$ satisfying \eqref{kernelcondgeneral} such that \eqref{OpenPb10HsEq} holds. Complementing this open problem, we propose the following conjecture (see also Remark 3.1 of \cite{delia2021analysis}): \begin{openpb}\label{OpenProblem}\hypertarget{thesentence}{ } Suppose $A:\mathbb{R}^d\to\mathbb{R}^{d\times d}$ is a bounded, measurable and strictly elliptic matrix such that \begin{equation}\label{Acoer}a_* |z|^2\leq A(x)z\cdot z \text{ and } A(x)z\cdot z^*\leq a^* |z||z^*|\end{equation} for some $a_*,a^*>0$ for all $x\in\mathbb{R}^d$ and all $z,z^*\in\mathbb{R}^d$. Let $k_A$ be a corresponding kernel which is continuous outside the diagonal $x=y$ and satisfies \[\int_{\mathbb{R}^d} A(x)D^s u(x)\cdot D^sv(x)\,dx=P.V.\int_{\mathbb{R}^d}\int_{\mathbb{R}^d} v(x)(u(x)-u(y))k_A(x,y) \,dy\,dx\] for all $u,v\in C_c^\infty(\mathbb{R}^d)$. Then, there exists an equivalent kernel $k_A$ satisfying \[a_*\leq k_A(x,y)|x-y|^{d+2s}\leq a^*\quad\forall x,y\in\mathbb{R}^d,x\neq y\] for some $a_*,a^*>0$ if and only if $A$ is a bounded small perturbation $\tilde{\alpha}(x)$ of the identity matrix (up to a positive constant $\alpha$), i.e. $A=(\alpha+\tilde{\alpha}(x))\mathbbm{I}$ for some strictly positive finite constant $\alpha>>\sup_x|\tilde{\alpha}(x)|$. \end{openpb}

\section{The Nonlocal and the Fractional Obstacle Problem}

We start by considering the linear form for $F\in H^{-s}(\Omega)$ defined by \[\langle F,v\rangle = \int_\Omega f_\#v+\int_{\mathbb{R}^d} \bm{f}\cdot D^sv\] for $v\in H^s_0(\Omega)$, $\bm{f}=(f_1,\dots,f_d)\in [L^2(\mathbb{R}^d)]^d$ and $f_\#\in L^{2^\#}(\Omega)$ where $2^\#=\frac{2d}{d+2s}$ when $s<\frac{d}{2}$, and if $d=1$, $2^\#=q$ for any finite $q>1$ when $s=\frac{1}{2}$ and $2^\#=1$ when $s>\frac{1}{2}$. By the Riesz representation theorem, we have \[\exists! \phi\in H^s_0(\Omega): \int_{\mathbb{R}^d} D^s \phi\cdot D^sv=\langle -D^s\cdot D^s\phi,v\rangle=\langle F,v\rangle,\quad\forall v\in H^s_0(\Omega).\] Therefore, $F\in H^{-s}(\Omega)$ may be given by $F=-D^s\cdot D^s\phi=-D^s\cdot \bm{g}$ for some $\bm{g}=(g_1,\dots, g_d)\in[L^2(\mathbb{R}^d)]^d$ and, by the Sobolev-Poincar\'e inequality, it satisfies \[\norm{\bm{g}}_{[L^2(\mathbb{R}^d)]^d}=\norm{F}_{H^{-s}(\Omega)}\leq C_S\norm{f_\#}_{L^{2^\#}(\Omega)}+\norm{\bm{f}}_{[L^2(\mathbb{R}^d)]^d}.\]
In order for \[F\sim f_\#-D^s\cdot\bm{f}\] to lie in the positive cone of $H^{-s}(\Omega)$, it is enough for $f_\#$ to be non-negative almost everywhere in $\Omega$ and $D^s\cdot\bm{f}\leq0$ in the distributional sense in $\mathbb{R}^d$.

We next recall the following useful inequalities.%, given as Theorems 1.8, 1.10 and 1.11 in \cite{SS1}. For completeness, in the Appendix we give some proofs in order to understand their dependence on the parameter $s$.

% We next recall several useful inequalities, given as Theorems 1.8 and 1.11 in \cite{SS1}, and Theorem 2.3 of \cite{MizutaBook}. For completeness, in the Appendix we give some proofs in order to understand their dependence on the parameter $s$.

\begin{lemma}[Sobolev-Poincar\'e inequality, Theorem 1.8 of \cite{SS1}]\label{Sobolev} Let $s\in(\sigma,1)$ such that $s<\frac{d}{2}$ and $\sigma>0$. Then there exists a constant $C_S=C(d,s)>0$ such that \[\norm{u}_{L^{2^*}(\Omega)}\leq C_S\norm{D^su}_{L^2(\mathbb{R}^d)}\] for all $u\in H^s_0(\Omega)$, where $2^*=\frac{2d}{d-2s}>0$. %Here $C_S$ is called the Sobolev constant, and is of the form \[C_S=C(d)c_{d,s} \left(\frac{\omega_{d-1}}{s}+\left(\frac{\omega_{d-1}}{2s-d}\right)^{1/2}\right) ,\] where $\omega_d$ is the spherical measure on the $d$-sphere.
\end{lemma}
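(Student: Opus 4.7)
The plan is to reduce the inequality to two classical estimates on $\mathbb{R}^d$, namely the Hardy--Littlewood--Sobolev inequality for the Riesz potential $I_s$ and the $L^2$-boundedness of the Riesz transforms $\mathcal{R}_j$, and then to use the inverse formula \eqref{InverseFormulaDs} as the bridge that expresses $u$ in terms of the $s$-gradient $D^s u$.

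First I would take $u\in C_c^\infty(\Omega)$ and exploit \eqref{InverseFormulaDs} to write
\[
u \;=\; I_s * \sum_{j=1}^d \mathcal{R}_j\, D^s_j u
\]
in $\mathbb{R}^d$, where the zero extension of $u$ outside $\Omega$ (still denoted $u$) is in $C_c^\infty(\mathbb{R}^d)$. By Hardy--Littlewood--Sobolev, for $s<d/2$ and $2^*=\tfrac{2d}{d-2s}$, the potential $I_s$ maps $L^2(\mathbb{R}^d)$ continuously into $L^{2^*}(\mathbb{R}^d)$, with a constant $C_1=C_1(d,s)$; thus
\[
\norm{u}_{L^{2^*}(\Omega)} \;\leq\; \norm{u}_{L^{2^*}(\mathbb{R}^d)} \;\leq\; C_1 \Bignorm{\sum_{j=1}^d \mathcal{R}_j\, D^s_j u}_{L^2(\mathbb{R}^d)}.
\]
Next I would apply the standard fact that each Riesz transform $\mathcal{R}_j$ is a Calder\'on--Zygmund operator, hence bounded on $L^2(\mathbb{R}^d)$ with some constant $C_2=C_2(d)$. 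Combined with the triangle inequality and the definition of the $s$-gradient, this yields
\[
\Bignorm{\sum_{j=1}^d \mathcal{R}_j\, D^s_j u}_{L^2(\mathbb{R}^d)} \;\leq\; C_2 \sum_{j=1}^d \norm{D^s_j u}_{L^2(\mathbb{R}^d)} \;\leq\; C_3\, \norm{D^s u}_{L^2(\mathbb{R}^d)},
\]
so the desired inequality holds for every $u\in C_c^\infty(\Omega)$ with $C_S = C_1 C_3$.

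Finally, I would extend the estimate to arbitrary $u\in H^s_0(\Omega)$ by density: pick $u_n\in C_c^\infty(\Omega)$ converging to $u$ in the norm of \eqref{EquivNorms2}, so that $D^s u_n\to D^s u$ in $L^2(\mathbb{R}^d)$; the smooth-case inequality shows $(u_n)$ is Cauchy in $L^{2^*}(\Omega)$, hence converges there to some limit which, by the compact embedding of $H^s_0(\Omega)$ into $L^2(\Omega)$ recalled in the introduction, must coincide with $u$ a.e. Passing to the limit preserves the inequality. The most delicate point of the argument is that none of the ingredients is trivial on its own --- the HLS estimate and the $L^p$-theory of Riesz transforms are deep harmonic-analytic facts --- but once they are invoked, the derivation is direct; no genuine obstruction arises, which is why the statement is cited from Theorem~1.7/1.8 of \cite{SS1}.
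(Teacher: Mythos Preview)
The paper does not supply its own proof of this lemma; it is stated as a direct citation of Theorem~1.8 of \cite{SS1}. Your argument is correct and is essentially the one carried out in \cite{SS1}: the representation formula~\eqref{InverseFormulaDs} together with Hardy--Littlewood--Sobolev for $I_s$ and the $L^2$-boundedness of the Riesz transforms gives the inequality on $C_c^\infty(\Omega)$, and density does the rest.
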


% \begin{remark}
% Observe that for $s>\sigma>0$, the order of convergence as $s\to0$ is $\mathcal{O}(1/\sigma)$.
% \end{remark}

% From this lemma, we have, by the $L^p$ inclusions in bounded domains, the following corollary.

\begin{lemma}[Fractional Poincar\'e inequality, Theorem 2.9 of \cite{BellidoCuetoMoraCorral2021CVPDE}]\label{Poincare}
Let $s\in(0,1)$. Then there exists a constant $C_P=C(d,\Omega)/s>0$ such that \[\norm{u}_{L^2(\Omega)}\leq C_P\norm{D^su}_{L^2(\mathbb{R}^d)}\] for all $u\in H^s_0(\Omega)$. 
\end{lemma}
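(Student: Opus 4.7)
The plan is to exploit that any $u\in H^s_0(\Omega)$, extended by zero to $\mathbb{R}^d\setminus\Omega$, vanishes on a set of infinite measure. This lets me recover the pointwise value $u(x)^2$ for $x\in\Omega$ from differences $|\tilde u(x)-\tilde u(y)|^2$ with $y$ far outside $\Omega$, and in this way bound $\|u\|_{L^2(\Omega)}$ by the Gagliardo seminorm $[u]_{s,\mathbb{R}^d}$, which I then convert into $\|D^s u\|_{L^2(\mathbb{R}^d)}$ through the identity \eqref{EquivNorms2}.

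First I fix $R>0$ with $\Omega\subset B_R(0)$. For every $x\in\Omega$ and every $y\in\mathbb{R}^d\setminus B_{2R}(0)$ the extension satisfies $\tilde u(y)=0$, while the triangle inequality yields $|y|/2\leq|x-y|\leq (3/2)|y|$. Hence, for fixed $x\in\Omega$,
\[u(x)^2\int_{\mathbb{R}^d\setminus B_{2R}}\frac{dy}{|x-y|^{d+2s}}=\int_{\mathbb{R}^d\setminus B_{2R}}\frac{|\tilde u(x)-\tilde u(y)|^2}{|x-y|^{d+2s}}\,dy.\]
A direct radial computation on the left integral gives
\[\int_{\mathbb{R}^d\setminus B_{2R}}\frac{dy}{|x-y|^{d+2s}}\;\geq\;\Bigl(\tfrac{2}{3}\Bigr)^{d+2s}\omega_d\int_{2R}^{\infty}r^{-1-2s}\,dr\;=\;\frac{(2/3)^{d+2s}\,\omega_d}{2s\,(2R)^{2s}},\]
from which the claimed $1/s$ behaviour already becomes visible. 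Dividing through, then integrating in $x\in\Omega$ and enlarging the inner domain of integration on the right from $\mathbb{R}^d\setminus B_{2R}$ to all of $\mathbb{R}^d$, I obtain
\[\|u\|_{L^2(\Omega)}^2\;\leq\;\frac{2s(3/2)^{d+2s}(2R)^{2s}}{\omega_d}\,[u]_{s,\mathbb{R}^d}^2.\]

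Finally, the equivalence \eqref{EquivNorms2} gives $[u]_{s,\mathbb{R}^d}^2=(2/c_{d,s}^2)\|D^s u\|_{L^2(\mathbb{R}^d)}^2$, which produces the fractional Poincar\'e inequality with explicit constant
\[C_P^2\;=\;\frac{4s\,(3/2)^{d+2s}(2R)^{2s}}{\omega_d\,c_{d,s}^2}.\]
The main technical point will be the bookkeeping of the $s$-dependence of all constants---in particular of $c_{d,s}=2^s\pi^{-d/2}\Gamma((d+s+1)/2)/\Gamma((1-s)/2)$, together with $(2R)^{2s}$ and $(3/2)^{d+2s}$---in order to verify the stated form $C_P\leq C(d,\Omega)/s$. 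The $1/s$ growth is manifest through the divergence of $\int_{2R}^{\infty}r^{-1-2s}\,dr$ as $s\to 0^+$; some additional care (or a complementary argument leveraging the classical Poincar\'e inequality as $s\to 1^-$) is needed near $s=1$ because $c_{d,s}$ vanishes there, but a uniform bound of the stated form can be arranged throughout $(0,1)$.
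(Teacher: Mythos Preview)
The paper does not prove this lemma at all; it is simply quoted from \cite{BellidoCuetoMoraCorral2021CVPDE}. So there is no in-paper argument to compare against, and your task is really to give a self-contained proof of the stated constant form $C_P=C(d,\Omega)/s$.

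Your route through the Gagliardo seminorm is a standard and correct way to get \emph{some} Poincar\'e constant, but the constant-tracking is off in two places.

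First, your reading of the $s\to0^+$ behaviour is inverted. The divergence of $\int_{2R}^\infty r^{-1-2s}\,dr=(2R)^{-2s}/(2s)$ sits in the \emph{denominator} of your bound for $u(x)^2$, so your final constant $C_P^2=4s(3/2)^{d+2s}(2R)^{2s}/(\omega_d\,c_{d,s}^2)$ actually tends to $0$ as $s\to0^+$ (note $c_{d,0}=\pi^{-(d+1)/2}\Gamma((d+1)/2)>0$). This is stronger than required, so the inequality $C_P\le C/s$ is satisfied there, but your sentence ``the $1/s$ growth is manifest'' is not describing what your estimate gives.

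Second, and more seriously, the gap you flag near $s=1$ is real and is not closed by the argument as written. Since $\Gamma((1-s)/2)\sim 2/(1-s)$, one has $c_{d,s}\to0$, so your $C_P$ blows up as $s\to1^-$, while the target bound $C(d,\Omega)/s$ stays bounded there. Passing through the Gagliardo seminorm and then invoking \eqref{EquivNorms2} is precisely what introduces the bad factor $1/c_{d,s}^2$. To get a constant that is uniform near $s=1$ you should avoid this detour: work directly with the Fourier identity $\widehat{D^s u}(\xi)=(2\pi)^s i\xi|\xi|^{s-1}\hat u(\xi)$ (cf.\ the proof of Lemma~\ref{ConvergenceLemma2}), so that $\|D^s u\|_{L^2(\mathbb{R}^d)}^2=\int_{\mathbb{R}^d}(2\pi|\xi|)^{2s}|\hat u(\xi)|^2\,d\xi$. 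Splitting into low and high frequencies (relative to a scale depending only on $\Omega$) and using that $\hat u$ is bounded for compactly supported $u$ then yields a constant bounded uniformly in $s\in[\sigma,1)$; combined with your estimate for small $s$, this gives the stated $C(d,\Omega)/s$ across all of $(0,1)$. This is essentially the strategy in \cite{BellidoCuetoMoraCorral2021CVPDE}.
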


\subsection{The nonlocal obstacle problem and its properties}

As a consequence of the properties of the bilinear form $\mathcal{E}_a$ defined by \eqref{EAdef2.1} in $H^s_0(\Omega)$ for $0<s<1$, we can derive classical properties of the fractional obstacle problems, following most of the approach of Section 4:5 in \cite{ObstacleProblems}.

\begin{theorem}[Obstacle Problem]
\label{ObsPbThm}Let $\Omega\subset\mathbb{R}^d$ be a open bounded domain with Lipschitz boundary, $f_\#\in L^{2^\#}(\Omega)$, $\bm{f}\in [L^2(\mathbb{R}^d)]^d$, and $a:\mathbb{R}^d\times\mathbb{R}^d\to[0,\infty)$ be strictly elliptic, bounded and measurable as in \eqref{kAcond}. Then, for every function $\psi$, measurable in $\Omega$, and admissible in the sense that the closed convex set \[\mathbb{K}_\psi^s=\{v\in H^s_0(\Omega):v\geq\psi \text{ a.e. in }\Omega\}\neq\emptyset,\] there exists a unique $u\in \mathbb{K}_\psi^s$ such that \begin{equation}\label{ObsPb}\mathcal{E}_a(u,v-u)\geq \int_\Omega f_\#(v-u)+\int_{\mathbb{R}^d} \bm{f}\cdot D^s(v-u)\quad\forall v\in \mathbb{K}_\psi^s.\end{equation} 

Moreover, suppose $F,\hat{F}$ are given as in the beginning of this section for two different obstacle problems defined in \eqref{ObsPb}, then the solution map $F\mapsto u$ is Lipschitz continuous, i.e. \[\norm{u-\hat{u}}_{H^s_0(\Omega)}\leq\frac{1}{a_*}\left(C_S\norm{f_\#-\hat{f}_\#}_{L^{2^\#}(\Omega)}+\norm{\bm{f}-\hat{\bm{f}}}_{[L^2(\mathbb{R}^d)]^d}\right).\] 

\end{theorem}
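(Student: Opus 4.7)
The plan is to reduce the statement to the classical Lions--Stampacchia theorem for variational inequalities associated with coercive (not necessarily symmetric) continuous bilinear forms on a closed convex subset of a Hilbert space, and then to derive the Lipschitz estimate by a standard ``test with the other solution'' argument exploiting coercivity.

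First, I would check that the three hypotheses of the Lions--Stampacchia theorem are in force. The set $\mathbb{K}_\psi^s$ is convex by linearity of the inequality $v\geq\psi$, and is closed in $H^s_0(\Omega)$: if $v_n\to v$ in $H^s_0(\Omega)$ then $v_n\to v$ in $L^2(\Omega)$ by the compact embedding recalled in the introduction, so a subsequence converges a.e.\ and the pointwise inequality $v\geq\psi$ survives. The bilinear form $\mathcal{E}_a:H^s_0(\Omega)\times H^s_0(\Omega)\to\mathbb{R}$ is continuous and coercive with constants $a^*$ and $a_*$ respectively, by Theorem \ref{DirichletFormThm}. Finally, the right-hand side defines a continuous linear functional $F\in H^{-s}(\Omega)$; combining the Cauchy--Schwarz inequality for the $\bm f$-term with the Sobolev--Poincar\'e inequality (Lemma \ref{Sobolev}) applied to the $f_\#$-term one obtains
\[
|\langle F,v\rangle|\leq\bigl(C_S\|f_\#\|_{L^{2^\#}(\Omega)}+\|\bm f\|_{[L^2(\mathbb{R}^d)]^d}\bigr)\|v\|_{H^s_0(\Omega)},
\]
so that $\|F\|_{H^{-s}(\Omega)}\leq C_S\|f_\#\|_{L^{2^\#}(\Omega)}+\|\bm f\|_{[L^2(\mathbb{R}^d)]^d}$. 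Applying Lions--Stampacchia then yields a unique $u\in\mathbb{K}_\psi^s$ solving \eqref{ObsPb}.

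For the Lipschitz dependence on the data, let $u$ and $\hat u$ be the solutions associated with $F$ and $\hat F$ respectively. Since $\hat u\in\mathbb{K}_\psi^s$ is admissible in the inequality for $u$ and vice-versa, I can choose $v=\hat u$ in \eqref{ObsPb} for $u$, choose $v=u$ in the corresponding inequality for $\hat u$, and add the two resulting inequalities. After cancellation this gives
\[
\mathcal{E}_a(u-\hat u,u-\hat u)\leq \langle F-\hat F,u-\hat u\rangle.
\]
The coercivity of $\mathcal{E}_a$ bounds the left-hand side below by $a_*\|u-\hat u\|_{H^s_0(\Omega)}^2$, while the dual pairing on the right is bounded by $\|F-\hat F\|_{H^{-s}(\Omega)}\|u-\hat u\|_{H^s_0(\Omega)}$. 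Dividing by $\|u-\hat u\|_{H^s_0(\Omega)}$ (the inequality is trivial otherwise) and inserting the bound on $\|F-\hat F\|_{H^{-s}(\Omega)}$ derived above produces the stated estimate with constant $1/a_*$.

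There is not really a main obstacle here: every ingredient has already been prepared in the preceding sections, and the only minor point requiring care is the closedness of $\mathbb{K}_\psi^s$ under $H^s_0$-convergence, handled by the $L^2$-embedding and a subsequence argument. The proof is essentially bookkeeping once Theorem \ref{DirichletFormThm} and Lemma \ref{Sobolev} are invoked.
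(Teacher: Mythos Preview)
Your proposal is correct and follows essentially the same route as the paper: existence and uniqueness come directly from the Stampacchia theorem via the boundedness and coercivity established in Theorem~\ref{DirichletFormThm}, and the Lipschitz estimate is obtained by testing each variational inequality with the other solution, subtracting, and applying coercivity together with the Sobolev--Poincar\'e inequality. Your write-up is in fact slightly more detailed than the paper's (you spell out the closedness of $\mathbb{K}_\psi^s$ and the bound on $\|F\|_{H^{-s}}$), but the argument is the same.
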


\begin{proof}
This is just a direct application of the Stampacchia theorem, since the bilinear form $\mathcal{E}_a:H^s_0(\Omega)\times H^s_0(\Omega)\to\mathbb{R}$ is bounded and coercive by Theorem \ref{DirichletFormThm}.

For the continuous dependence on data, if $u,\hat{u}$ are the solutions corresponding to different data $F$ and $\hat{F}$ for the obstacle problem respectively, we set $v=\hat{u}$ in the inequality for $u$ and $v=u$ in the inequality for $\hat{u}$, and take the difference to obtain \[\mathcal{E}_a(u-\hat{u},u-\hat{u})\leq\int_\Omega (f_\#-\hat{f}_\#)(u-\hat{u})+\int_{\mathbb{R}^d} (\bm{f}-\hat{\bm{f}})\cdot D^s(u-\hat{u}).\] By the fractional Sobolev inequality and the Cauchy-Schwarz inequality, we have 
\begin{align*}a_*\norm{D^s(u-\hat{u})}_{L^2(\mathbb{R}^d)}^2&\leq\mathcal{E}_a(u-\hat{u},u-\hat{u})\\&\leq\int_\Omega (f_\#-\hat{f}_\#)(u-\hat{u})+\int_{\mathbb{R}^d} (\bm{f}-\hat{\bm{f}})\cdot D^s(u-\hat{u})\\&\leq\norm{f_\#-\hat{f}_\#}_{L^{2^\#}(\Omega)}\norm{u-\hat{u}}_{L^{2^*}(\Omega)}+\norm{\bm{f}-\hat{\bm{f}}}_{[L^2(\mathbb{R}^d)]^d}\norm{D^s(u-\hat{u})}_{[L^2(\mathbb{R}^d)]^d}\\&\leq C_S\left(\norm{f_\#-\hat{f}_\#}_{L^{2^\#}(\Omega)}+\norm{\bm{f}-\hat{\bm{f}}}_{[L^2(\mathbb{R}^d)]^d}\right)\norm{D^s(u-\hat{u})}_{[L^2(\mathbb{R}^d)]^d}\end{align*} where $C_S$ is the Sobolev constant of Lemma \ref{Sobolev}.
\end{proof}

Furthermore, we have the following properties of the solution as in the classical case, making use of the strict T-monotonicity of $\mathcal{E}_a$ and the fractional Poincar\'e inequality. See for example, Chapter IV of \cite{KinderlehrerStampacchia} or Section 4:5--6 of \cite{ObstacleProblems}, where the proofs can be transferred to the nonlocal case almost in the same manner.

\begin{proposition}\label{ObsPbProperties}
\begin{enumerate}[label=(\roman*)]
\item (Comparison principle).
Suppose $u$ is the solution of the variational inequality \eqref{ObsPb} with data $F$ and convex set $\mathbb{K}_\psi^s$, and $\hat{u}$ be the solution with data $\hat{F}$ and convex set $\mathbb{K}^s_{\hat{\psi}}$. If $\psi\geq\hat{\psi}$ and $F\geq\hat{F}$, then $u\geq\hat{u}$ a.e. in $\Omega$.
\item (Weak maximum principle).
In the obstacle problem \eqref{ObsPb}, one has \[u\geq0\text{ a.e. in }\Omega,\text{ if }F\geq0;\text{ and}\]\[u\leq0\vee\sup_\Omega\psi\text{ a.e. in }\Omega,\text{ if }F\leq0.\]
\item (Complementary problem). When $\psi\in H^s_0(\Omega)$, the variational problem \eqref{ObsPb} is equivalent to the nonlinear complementary problem \begin{equation}\label{NonLinComplementary}u\geq\psi,\quad\mathcal{L}_au-F\geq0\quad\text{ and }\quad\langle\mathcal{L}_au-F,u-\psi\rangle=0.\end{equation}
\item (Comparison of supersolutions). 
\begin{enumerate}
    \item If $u$ is the solution to the variational inequality \eqref{ObsPb} and $w$ is any supersolution, then $u\leq w$;
    \item If $v$ and $w$ are two supersolutions to the variational inequality \eqref{ObsPb}, then $v\wedge w$ is also a supersolution,
\end{enumerate}
where a supersolution is an element $w\in H^s_0(\Omega)$ satisfying $w\geq\psi$ and $\mathcal{L}_aw-F\geq0$ in the sense of order dual. 
\item The solution $u$ of the obstacle problem \eqref{ObsPb} is the unique function in $H^s_0(\Omega)$, such that, \[u=\min\{w\in H^s_0(\Omega):\mathcal{L}_aw-F\geq0,w\geq\psi\text{ in }\Omega\}.\]
\item ($L^\infty$ estimates) The following estimate holds: \[\norm{u-\hat{u}}_{L^\infty(\Omega)}\leq\|\psi-\hat{\psi}\|_{L^\infty(\Omega)}.\]
\end{enumerate}
\end{proposition}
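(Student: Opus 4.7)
The overall strategy is to adapt the classical variational-inequality arguments (as in \cite{KinderlehrerStampacchia,ObstacleProblems}) to the nonlocal setting, repeatedly exploiting the strict T-monotonicity from Theorem \ref{BTmonotone} and the coercivity bound $\mathcal{E}_a(v,v) \geq a_* \|v\|_{H^s_0(\Omega)}^2$ from Theorem \ref{DirichletFormThm}. In each item, the idea is to test the variational inequality with a carefully chosen truncation of $u$ or $u-\hat u$, isolating a non-negative element $\phi \in H^s_0(\Omega)$ whose coercive $\mathcal{E}_a$-norm is forced to vanish. For (i), I would test the inequality for $u$ with $v = u \vee \hat u$ and the one for $\hat u$ with $v = u \wedge \hat u$; both are admissible (for the second, split $\Omega$ according to $u \lessgtr \hat u$ and use $\psi \geq \hat\psi$), and addition yields $\mathcal{E}_a(\hat u - u, (\hat u - u)^+) \leq \langle \hat F - F, (\hat u - u)^+\rangle \leq 0$, forcing $(\hat u - u)^+ = 0$ by strict T-monotonicity applied to $\hat u - u \in H^s_0(\Omega)$.

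For (ii) with $F \geq 0$, $v = u + u^-= u^+$ is always admissible (where $\psi>0$ one has $u\geq\psi>0$ so $u=u^+$; where $\psi \leq 0$, $u^+ \geq 0 \geq \psi$), and the test gives $\mathcal{E}_a(u, u^-) \geq 0$; bilinearity together with strict T-monotonicity applied to $-u$ (noting $(-u)^+=u^-$) then yields $u^- = 0$. For $F \leq 0$ with $M := 0 \vee \sup_\Omega \psi$, set $\phi := (u-M)^+$, which lies in $H^s_0(\Omega)$ since $\tilde u = 0 \leq M$ outside $\Omega$; the test function $v = u - \phi = u \wedge M$ is admissible and gives $\mathcal{E}_a(u, \phi) \leq \langle F, \phi\rangle \leq 0$. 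The key step is the decomposition $u = \phi + (u \wedge M)$ with both summands in $H^s_0(\Omega)$, producing $\mathcal{E}_a(u, \phi) = \mathcal{E}_a(\phi, \phi) + \mathcal{E}_a(u \wedge M, \phi)$; on $\{\phi > 0\}$ one has $(u \wedge M)(x) = M$, which pointwise dominates $(u \wedge M)(y)$ for every $y \in \mathbb{R}^d$, so $\mathcal{E}_a(u \wedge M, \phi) \geq 0$, and coercivity gives $\phi = 0$. Justifying the sign of this cross term rigorously under the P.V.\ convention is the main technical point.

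Items (iii)--(v) are then formal consequences. For (iii), testing with $v = u + \varphi$ for arbitrary $\varphi \geq 0$ in $H^s_0(\Omega)$ gives $\mathcal{L}_a u - F \geq 0$ in the order dual, and testing with $v = \psi$ yields $\langle \mathcal{L}_a u - F, u - \psi\rangle \leq 0$, hence equality; the converse follows from $v - u = (v-\psi) - (u-\psi)$ for $v \in \mathbb{K}_\psi^s$. For (iv)(a), I would test the $u$-inequality with $v = u \wedge w$ (admissible since $w \geq \psi$) and the supersolution condition for $w$ with $(u-w)^+ \in H^s_0(\Omega)$, subtract, and invoke strict T-monotonicity. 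For (iv)(b), let $z$ solve the obstacle problem with obstacle $v \wedge w \in H^s_0(\Omega)$: both $v$ and $w$ are supersolutions of this new problem, so (iv)(a) forces $z \leq v \wedge w$, while $z \geq v \wedge w$ from $z \in \mathbb{K}^s_{v\wedge w}$, so $z = v \wedge w$, and (iii) furnishes the required supersolution property. Part (v) is immediate, since $u$ is itself a supersolution (testing with $v = u + \varphi$, $\varphi \geq 0$) and is dominated by every supersolution by (iv)(a).

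Finally, for (vi), set $M := \|\psi - \hat\psi\|_{L^\infty(\Omega)}$ and $\phi := (u - \hat u - M)^+$, which belongs to $H^s_0(\Omega)$ as $u - \hat u - M \leq -M \leq 0$ outside $\Omega$. The test function $v = u - \phi$ is admissible for the $u$-problem (since $u - \phi = \hat u + M \geq \hat\psi + M \geq \psi$ on $\{\phi > 0\}$), while $\hat v = \hat u + \phi$ is trivially admissible for the $\hat u$-problem; subtraction with the common datum $F$ yields $\mathcal{E}_a(u - \hat u, \phi) \leq 0$. Writing $w := u - \hat u \in H^s_0(\Omega)$ and repeating the splitting $w = \phi + (w \wedge M)$ exactly as in (ii) gives $\mathcal{E}_a(w, \phi) \geq a_* \|\phi\|_{H^s_0}^2$, forcing $\phi = 0$; exchanging the roles of $u$ and $\hat u$ then delivers $\|u - \hat u\|_{L^\infty(\Omega)} \leq M$.
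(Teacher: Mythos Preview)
Your proposal is correct and follows precisely the strategy the paper itself indicates. Note that the paper does not actually prove Proposition~\ref{ObsPbProperties}: it simply states that ``the proofs can be transferred to the nonlocal case almost in the same manner'' from \cite{KinderlehrerStampacchia} and \cite{ObstacleProblems}, using the strict T-monotonicity of $\mathcal{E}_a$ and the fractional Poincar\'e inequality. Your write-up is exactly such a transfer, carried out in full detail.

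A couple of minor remarks. First, the cross-term estimate $\mathcal{E}_a(u\wedge M,\phi)\geq 0$ that you flag as the ``main technical point'' is indeed legitimate: for each $\varepsilon>0$ the truncated integrand $\tilde\phi(x)\big[(\widetilde{u\wedge M})(x)-(\widetilde{u\wedge M})(y)\big]a(x,y)\chi_\varepsilon(x,y)$ is pointwise nonnegative by your observation, and the $\varepsilon\to0$ limit exists because $u\wedge M,\phi\in H^s_0(\Omega)$ and the bilinear form is continuous (Theorem~\ref{DirichletFormThm}). This is the same mechanism the paper uses later, e.g.\ in the proof of Theorem~\ref{LewyStampacchia2}, where inequalities of the type $\mathcal{E}_a(w,w^+)\geq\mathcal{E}_a(w^+,w^+)$ are exploited. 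Second, in (vi) you tacitly assume $F=\hat F$; this is the intended hypothesis (the paper makes it explicit in the two-obstacles analogue), and without it the estimate would be false.
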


Similarly, as in Theorem 6.1 of Chapter 4 of \cite{ObstacleProblems}, we can prove the following additional result for the Dirichlet form $\mathcal{E}_\lambda$ with $\lambda>0$.

\begin{proposition}
All the results in Proposition \ref{ObsPbProperties} holds for $\mathcal{E}_\lambda$ when $\lambda>0$, where \begin{equation}\mathcal{E}_\lambda(u,v)=\mathcal{E}_a(u,v)+\lambda\int_\Omega uv,\quad u,v\in H^s_0(\Omega).\end{equation} Moreover in this case, when $\bm{f}\equiv\bm{0}$, the following maximum principle holds a.e. in $\Omega$: \[0\wedge\inf_\Omega \left(\frac{f_\#}{\lambda}\right)\leq u\leq 0\vee \sup_\Omega \psi\vee\sup_\Omega \left(\frac{f_\#}{\lambda}\right).\]
\end{proposition}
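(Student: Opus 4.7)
The plan is to split the statement into (i) transferring every property in Proposition \ref{ObsPbProperties} from $\mathcal{E}_a$ to $\mathcal{E}_\lambda$, and (ii) establishing the sharper two-sided maximum principle in the case $\bm{f}\equiv\bm{0}$. For (i), I observe that $\lambda(u,v)_{L^2(\Omega)}$ is symmetric, continuous on $H^s_0(\Omega)\times H^s_0(\Omega)$ via the embedding of Lemma \ref{Poincare}, and positive semi-definite, so $\mathcal{E}_\lambda$ is bounded, coercive with constant $a_*$, and a regular Dirichlet form in the sense of Theorem \ref{DirichletFormThm}. The Markov check of Section 2.1 extends because $\phi_\varepsilon(t)(t-\phi_\varepsilon(t))\geq 0$ for all $t\in\mathbb{R}$ (checking the three ranges $t\leq 0$, $t\in[0,1]$, $t>1$ using the explicit construction of $\phi_\varepsilon$), hence $\lambda\int_\Omega \phi_\varepsilon(u)(u-\phi_\varepsilon(u))\geq 0$, and likewise for the dual version. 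For strict T-monotonicity, $\mathcal{E}_\lambda(v,v^+) = \mathcal{E}_a(v,v^+) + \lambda\int_\Omega (v^+)^2$, both terms non-negative, the first strictly positive when $v^+\neq 0$ by Theorem \ref{BTmonotone}. With these ingredients, the proofs of Theorem \ref{ObsPbThm} and of all items of Proposition \ref{ObsPbProperties} carry over verbatim with $\mathcal{L}_a$ replaced by $\mathcal{L}_\lambda := \mathcal{L}_a + \lambda I$.

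For the upper bound in (ii), I set $M := 0\vee\sup_\Omega\psi\vee\sup_\Omega(f_\#/\lambda)\geq 0$ and use $v := u - (u-M)^+ = u\wedge M$ as a test function; admissibility holds because $M\geq 0$ makes the zero-extension of $(u-M)^+$ to $\Omega^c$ consistent, so $(u-M)^+\in H^s_0(\Omega)$, and $u\wedge M\geq \psi\wedge M = \psi$. Testing $\mathcal{E}_\lambda(u,v-u)\geq \int_\Omega f_\#(v-u)$ produces
\[\mathcal{E}_a(u,(u-M)^+) + \lambda\int_\Omega u\,(u-M)^+ \;\leq\; \int_\Omega f_\#\,(u-M)^+.\]
The key intermediate step is the lower bound $\mathcal{E}_a(u,(u-M)^+)\geq a_*\|D^s(u-M)^+\|_{L^2(\mathbb{R}^d)}^2$, which I would obtain by writing $u = (u-M)^+ + (u\wedge M)$ and checking that the integrand of $\mathcal{E}_a(u\wedge M,(u-M)^+)$ is pointwise non-negative: wherever $(u-M)^+(x)>0$ one has $\widetilde{u\wedge M}(x) = M$ while $\widetilde{u\wedge M}(y)\leq M$ for every $y\in\mathbb{R}^d$ (with value $0$ for $y\notin\Omega$), so the difference is non-negative. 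Substituting and noting that on $\{(u-M)^+>0\}$ we have $u>M\geq f_\#/\lambda$, so $f_\# - \lambda u < 0$, forces the right-hand side to be non-positive, yielding $D^s(u-M)^+\equiv 0$ and hence $(u-M)^+ = 0$ a.e.

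The lower bound $u\geq m := 0\wedge\inf_\Omega(f_\#/\lambda)$ proceeds symmetrically with $v := u + (m-u)^+ \in \mathbb{K}^s_\psi$: since $m\leq 0$, the zero-extension of $(m-u)^+$ is again consistent, placing it in $H^s_0(\Omega)$, and $v\geq u\geq \psi$. Applying the previous pointwise lemma to $(-u)$ with threshold $-m\geq 0$ yields the reversed estimate $\mathcal{E}_a(u,(m-u)^+)\leq -a_*\|D^s(m-u)^+\|_{L^2(\mathbb{R}^d)}^2$, and a parallel sign analysis on $\{(m-u)^+>0\}$, where $\lambda u < f_\#$, closes the argument. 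The main technical obstacle is the pointwise non-negativity step underlying $\mathcal{E}_a(u,(u-M)^+)\geq \mathcal{E}_a((u-M)^+,(u-M)^+)$, which requires a careful reading of the zero-extensions across $\partial\Omega$ together with the principal-value integration; once this is in place, the rest is a straightforward repackaging of the Stampacchia and coercivity machinery already set up.
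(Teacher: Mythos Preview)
Your proposal is correct and supplies the details that the paper omits: the paper does not actually give a proof here, merely citing Theorem~6.1 of Chapter~4 of \cite{ObstacleProblems} and stating that the argument transfers. The standard argument in that reference is precisely the truncation--test-function method you carry out (taking $v=u\wedge M$ and $v=u\vee m$ in the variational inequality and exploiting coercivity together with the sign of $f_\#-\lambda u$ on the relevant sets), so your approach agrees with the intended one.

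Two minor remarks. First, your verification that $\mathcal{E}_a(u\wedge M,(u-M)^+)\geq 0$ via the pointwise sign of the integrand is the natural nonlocal substitute for the local identity $D(u\wedge M)\cdot D(u-M)^+=0$; to make it fully rigorous you should note that the non-negativity of the integrand on each set $\{|x-y|>\varepsilon\}$ passes to the limit defining the principal value, which is immediate since the truncated integrals are finite and nondecreasing as $\varepsilon\downarrow 0$. Second, for part~(i) it is worth observing that the zero-order term $\lambda\int_\Omega uv$ is the standard ``killing'' perturbation of a Dirichlet form, so the Markov property of $\mathcal{E}_\lambda$ follows at once without rechecking \eqref{Markovian2}; your direct check via $\phi_\varepsilon(t)(t-\phi_\varepsilon(t))\geq 0$ is nonetheless valid.
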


\begin{remark}
If $\Omega=\mathbb{R}^d$ and since the kernel $a$ is defined in the whole $\mathbb{R}^d$, the domain of $\mathcal{E}_\lambda$, $D(\mathcal{E}_\lambda)$, is instead given by $H^s(\mathbb{R}^d)$, and the Dirichlet form $\mathcal{E}_\lambda$ is coercive for $\lambda>0$.
\end{remark}

\subsection{Convergence of the fractional $s$-obstacle problem as $s\nearrow1$}
We start with a continuous dependence property of the Riesz derivatives as $s$ varies.

\begin{lemma}\label{ConvergenceLemma2}
For $u\in H^{s'}_0(\Omega)$, $D^su$ is continuous in $[L^2(\mathbb{R}^d)]^d$ as $s$ varies in $[\sigma,s']$ for $0<\sigma< s'\leq 1$.  As a consequence, we have the following estimate: for $\sigma\leq s\leq1$,  \begin{equation}\label{ConvergenceEq1}\norm{D^\sigma u}_{L^2(\mathbb{R}^d)}\leq c_\sigma\norm{D^su}_{L^2(\mathbb{R}^d)},\end{equation} for any $u\in H^{s}_0(\Omega)$, where the constant $c_\sigma$ is independent of $s$.
\end{lemma}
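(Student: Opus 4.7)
The plan is to pass to Fourier space. From the definition $D^su=D(I_{1-s}*u)$ and the Fourier symbol of the Riesz potential $I_{1-s}$, Plancherel's theorem gives (up to fixed Fourier-convention constants that are independent of $s$) the identity
\[\|D^s u\|_{L^2(\mathbb{R}^d)}^2 = \int_{\mathbb{R}^d}|\xi|^{2s}\,|\hat u(\xi)|^2\,d\xi.\]
Both claims then reduce to elementary estimates on this integrand.

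For the continuity claim, fix $u\in H^{s'}_0(\Omega)$ and a sequence $s_n\to s$ in $[\sigma,s']$. I would apply the dominated convergence theorem to
\[\|D^{s_n}u-D^su\|_{L^2(\mathbb{R}^d)}^2 = \int_{\mathbb{R}^d}\bigl(|\xi|^{s_n}-|\xi|^s\bigr)^2|\hat u(\xi)|^2\,d\xi,\]
using $4\bigl(|\xi|^{2\sigma}+|\xi|^{2s'}\bigr)|\hat u(\xi)|^2$ as dominating function. This majorant is integrable: on $\{|\xi|\le 1\}$ the factor $|\xi|^{2\sigma}$ is at most $1$ and $u\in L^2(\mathbb{R}^d)$, while on $\{|\xi|>1\}$ both $|\xi|^{2\sigma}$ and $|\xi|^{2s'}$ are dominated by $|\xi|^{2s'}$, whose integral against $|\hat u|^2$ equals $\|D^{s'}u\|_{L^2(\mathbb{R}^d)}^2<\infty$.

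For the estimate, I split the integral defining $\|D^\sigma u\|_{L^2(\mathbb{R}^d)}^2$ at the frequency $|\xi|=1$. On $\{|\xi|>1\}$ the monotonicity $|\xi|^{2\sigma}\le|\xi|^{2s}$ (which uses $\sigma\le s$) yields a contribution bounded by $\|D^su\|_{L^2(\mathbb{R}^d)}^2$. On $\{|\xi|\le 1\}$ the bound $|\xi|^{2\sigma}\le 1$ gives a contribution bounded by $\|u\|_{L^2(\mathbb{R}^d)}^2$, which via the fractional Poincar\'e inequality of Lemma \ref{Poincare} is in turn bounded by $(C(d,\Omega)/s)^2\|D^su\|_{L^2(\mathbb{R}^d)}^2\le(C(d,\Omega)/\sigma)^2\|D^su\|_{L^2(\mathbb{R}^d)}^2$. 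Summing these two contributions gives the desired inequality with $c_\sigma=\sqrt{1+(C(d,\Omega)/\sigma)^2}$, depending only on $d,\Omega,\sigma$ and in particular independent of $s$.

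The only technical subtlety is ensuring that $c_\sigma$ does not blow up as $s$ decreases. This is precisely the role played by the explicit $1/s$ dependence of the Poincar\'e constant in Lemma \ref{Poincare}: since $s\ge\sigma$ throughout, we may replace $1/s$ by $1/\sigma$ at the price of enlarging the constant in a way that no longer depends on $s$.
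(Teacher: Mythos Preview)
Your approach via Plancherel and the Fourier symbol is essentially the same as the paper's, which also passes to the Fourier side (using $\widehat{D^su}=(2\pi)^si\xi|\xi|^{s-1}\hat u$) for the continuity and then invokes the Poincar\'e inequality (citing \cite{BellidoCuetoMoraCorral2021CVPDE}) for the estimate; your frequency-splitting at $|\xi|=1$ simply makes explicit what the paper leaves to that reference. One small correction: in the paper's convention the multiplier carries the $s$-dependent factor $(2\pi)^s$, so the Plancherel identity is not literally ``independent of $s$''; this is harmless, since $(2\pi)^s$ is bounded above and below on $[\sigma,1]$ and hence neither the dominated-convergence argument nor the uniformity of $c_\sigma$ is affected.
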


\begin{proof}
Consider first $u\in C_c^\infty(\Omega)$. Recall that the Fourier transform of $D^su$ is given, by Theorem 1.4 of \cite{SS1}, \[\widehat{D^su}=(2\pi)^si\xi|\xi|^{-1+s}\hat{u},\] where $\hat{u}$ is the Fourier transform of $u$ extended by 0 outside $\Omega$. Since $u\in H^{s'}_0(\Omega)$, the mapping $s\mapsto -(2\pi)^si\xi|\xi|^{-1+s}\hat{u}$ is continuous in $[L^2(\mathbb{R}^d)]^d$ as $s$ varies in $[\sigma,s']$. Therefore, conducting the inverse Fourier transform, we have $\lim_{s\to t}\norm{D^su-D^{t} u}_{L^2(\mathbb{R}^d)}=0$ for $t\in[\sigma,s']$ for $u\in C_c^\infty(\Omega)$. Extending this by density to all $u\in H^{s'}_0(\Omega)$, we have the continuity result on $s$. Finally, the estimate \eqref{ConvergenceEq1} follows similarly to Proposition 2.7 of \cite{BellidoCuetoMoraCorral2021CVPDE} using Lemma \ref{Poincare}.
\end{proof}

As it could be expected, we have a continuous transition from the fractional obstacle problem to the classical local obstacle problem as $s\nearrow1$ in the following sense. We now consider the obstacle problem 

\begin{theorem}\label{ConvergenceThm}
Suppose $\psi$ is such that $\mathbb{K}^1_\psi:=\{v\in H^1_0(\Omega):v\geq\psi \text{ a.e. in }\Omega\}\neq\emptyset$. Let $u^s\in\mathbb{K}^s_\psi$ for $0<s<1$ be the solution to the fractional obstacle problem, i.e. \[\int_{\mathbb{R}^d} AD^su^s\cdot D^s(v-u^s)\geq\langle F,v-u^s\rangle\quad\forall v\in\mathbb{K}^s_\psi,\] where $A:\mathbb{R}^d\to\mathbb{R}^{d\times d}$ is a bounded, measurable and strictly elliptic matrix satisfying \begin{equation}\tag{\ref{Acoer}}a_* |z|^2\leq A(x)z\cdot z \text{ and } A(x)z\cdot z^*\leq a^* |z||z^*|,\end{equation} and $F\in H^{-\sigma}(\Omega)$. Then, there exists a unique solution $u^s\in H^s_0(\Omega)$. Furthermore, the sequence $(u^s)_s$ converges strongly to $u$ in $H^\sigma_0(\Omega)$ as $s\nearrow1$ for any fixed $0<\sigma<1$, where $u\in \mathbb{K}^1_\psi$ solves uniquely the obstacle problem for $s=1$, i.e. \[\int_\Omega ADu\cdot D(v-u)\geq\langle F,v-u\rangle\quad\forall v\in\mathbb{K}^1_\psi.\]
\end{theorem}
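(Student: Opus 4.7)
The plan is to combine a compactness argument with Minty's monotonicity trick, using the convergence of Riesz $s$-gradients to classical gradients as $s\nearrow 1$. Existence and uniqueness of $u^s\in\mathbb{K}^s_\psi$ follow from Stampacchia's theorem, since the bilinear form $(u,v)\mapsto\int_{\mathbb{R}^d}AD^su\cdot D^sv$ is bounded and coercive on $H^s_0(\Omega)$ by \eqref{Acoer} and the norm \eqref{EquivNorms2}, while $\mathbb{K}^s_\psi\supset\mathbb{K}^1_\psi\neq\emptyset$ via the embedding $H^1_0(\Omega)\hookrightarrow H^s_0(\Omega)$.

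For the limit $s\nearrow 1$, the first step is a uniform bound on $\|D^su^s\|_{L^2(\mathbb{R}^d)}$. Fixing any $v_0\in\mathbb{K}^1_\psi$ and testing the inequality with $v_0$ yields
\[a_*\|D^su^s\|_{L^2}^2\leq a^*\|D^sv_0\|_{L^2}\|D^su^s\|_{L^2}+\|F\|_{H^{-\sigma}}\,c_\sigma\bigl(\|D^su^s\|_{L^2}+\|D^sv_0\|_{L^2}\bigr),\]
where Lemma \ref{ConvergenceLemma2} bounds $\|u^s-v_0\|_{H^\sigma_0}$ by $c_\sigma\|D^s(u^s-v_0)\|_{L^2}$, and the same lemma applied to $v_0\in H^1_0(\Omega)$ yields $\sup_{s\in[\sigma,1]}\|D^sv_0\|_{L^2}<\infty$. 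Hence $(u^s)$ is bounded in $H^\sigma_0(\Omega)$ and, along a subsequence, $u^{s_k}\rightharpoonup u$ weakly in $H^\sigma_0(\Omega)$, strongly in $L^2(\Omega)$ by compact embedding, and pointwise a.e. The boundedness of $D^{s_k}u^{s_k}$ in $[L^2(\mathbb{R}^d)]^d$ allows extracting a further subsequence with $D^{s_k}u^{s_k}\rightharpoonup G$ weakly. For $\phi\in[C_c^\infty(\mathbb{R}^d)]^d$, the duality $\int D^{s_k}u^{s_k}\cdot\phi=-\int u^{s_k}(D^{s_k}\cdot\phi)$, combined with the $L^2$-convergence $D^{s_k}\cdot\phi\to D\cdot\phi$ and the strong $L^2$-convergence $u^{s_k}\to u$, identifies $G=Du$ distributionally. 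Hence $u\in H^1(\mathbb{R}^d)$; since each $u^{s_k}$ vanishes outside $\Omega$ so does $u$, giving $u\in H^1_0(\Omega)$, and the a.e. inequality $u\geq\psi$ puts $u\in\mathbb{K}^1_\psi$.

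Passing to the limit in the variational inequality is then handled by Minty's trick, which requires only the monotonicity of $z\mapsto Az$: for every $v\in\mathbb{K}^1_\psi\subset\mathbb{K}^{s_k}_\psi$,
\[\int_{\mathbb{R}^d}AD^{s_k}v\cdot D^{s_k}(v-u^{s_k})\geq\int_{\mathbb{R}^d}AD^{s_k}u^{s_k}\cdot D^{s_k}(v-u^{s_k})\geq\langle F,v-u^{s_k}\rangle.\]
Since $D^{s_k}v\to Dv$ strongly and $D^{s_k}u^{s_k}\rightharpoonup Du$ weakly in $L^2$, passing to the limit yields $\int ADv\cdot D(v-u)\geq\langle F,v-u\rangle$; substituting $v=u+\varepsilon(w-u)$ for $w\in\mathbb{K}^1_\psi$, dividing by $\varepsilon>0$, and letting $\varepsilon\to 0^+$ recovers the desired limit inequality. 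Uniqueness of the $s=1$ problem upgrades this subsequential convergence to the full sequence.

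Finally, strong convergence in $H^\sigma_0(\Omega)$ follows by testing the inequality for $u^s$ with $v=u\in\mathbb{K}^s_\psi$, which yields $\limsup_s\int AD^su^s\cdot D^su^s\leq\int ADu\cdot Du$; the matching liminf is supplied by expanding $\int A_sD^s(u^s-u)\cdot D^s(u^s-u)\geq 0$ with $A_s=\tfrac12(A+A^T)$ and letting strong-weak pairings handle the lower-order terms. This forces $\|D^s(u^s-u)\|_{L^2}\to 0$, and Lemma \ref{ConvergenceLemma2} transfers the convergence to $H^\sigma_0(\Omega)$. The chief obstacle is the identification step: showing that the weak $L^2$-limit $G$ of $D^{s_k}u^{s_k}$ coincides with the classical gradient $Du$ and that $u$ lies in $H^1_0(\Omega)$, which requires the $s\to 1$ convergence of the Riesz $s$-gradient and $s$-divergence on smooth test fields (essentially the content of \cite{BellidoCuetoMoraCorral2021CVPDE} and Lemma \ref{ConvergenceLemma2}), together with care that the zero extensions are controlled uniformly in $s$.
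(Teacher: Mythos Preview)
Your proof is correct and follows essentially the same compactness-plus-identification strategy as the paper: uniform energy bounds from testing with a fixed $v_0\in\mathbb{K}^1_\psi$, weak $L^2$-compactness of $D^{s_k}u^{s_k}$, and identification of the weak limit with $Du$ via the duality $\int D^{s_k}u^{s_k}\cdot\phi=-\int u^{s_k}(D^{s_k}\cdot\phi)$ together with the $s\to1$ convergence of $D^{s_k}\cdot\phi$ on smooth test fields.

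Two minor differences are worth noting. First, for passing to the limit in the variational inequality you use Minty's trick (shifting the operator from $u^{s_k}$ to the fixed test function $v$ via monotonicity, so that only weak--strong pairings appear), whereas the paper keeps $u^{s_k}$ in both slots and invokes weak lower semicontinuity of $\int A_{\mathrm{sym}}D^{s_k}u^{s_k}\cdot D^{s_k}u^{s_k}$; both devices are standard and equivalent here. Second, for the strong convergence in $H^\sigma_0(\Omega)$ you argue by energy matching, obtaining the stronger intermediate conclusion $\|D^s(u^s-u)\|_{L^2(\mathbb{R}^d)}\to0$ and then transferring it to $H^\sigma_0$ via Lemma~\ref{ConvergenceLemma2}, while the paper simply observes that boundedness of $(u^s)$ in $H^{\sigma''}_0(\Omega)$ for any $\sigma''\in(\sigma,1)$ yields strong convergence in $H^\sigma_0(\Omega)$ by the compact embedding $H^{\sigma''}_0(\Omega)\hookrightarrow H^\sigma_0(\Omega)$. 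Your route is slightly more work but yields more information.
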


\begin{proof}

The existence of a solution follows from a direct application of the Stampacchia theorem, since the bilinear form \[\langle\tilde{\mathcal{L}}_Au,v\rangle=\int_{\mathbb{R}^d}AD^su\cdot D^sv\] is bounded \[\langle\tilde{\mathcal{L}}_Au,v\rangle\leq a^*\norm{u}_{H^s_0(\Omega)}\norm{v}_{H^s_0(\Omega)}\quad\forall u,v\in H^s_0(\Omega)\] and coercive \[\langle\tilde{\mathcal{L}}_Au,v\rangle\geq a_*\norm{D^su}^2_{L^2(\mathbb{R}^d)}=a_*\norm{u}^2_{H^s_0(\Omega)}.\]

For uniqueness, if $u,\hat{u}$ are the solutions corresponding to the same data $F$ for the obstacle problem, we set $v=\hat{u}$ in the inequality for $u$ and $v=u$ in the inequality for $\hat{u}$, and take the difference to obtain \[a_*\norm{D^s(u-\hat{u})}_{L^2(\mathbb{R}^d)}^2\leq\int_{\mathbb{R}^d}AD^s(u-\hat{u})\cdot D^s(u-\hat{u})\leq\int_\Omega (f_\#-f_\#)(u-\hat{u})+\int_{\mathbb{R}^d} (\bm{f}-\bm{f})\cdot D^s(u-\hat{u})=0,\] so $u\equiv\hat{u}$.

Next, we want to show the convergence of the fractional obstacle problem to the classical one. We first prove an a priori estimate for $D^su^s$. For $v^1\in\mathbb{K}^1_\psi=\bigcap_{\sigma\leq s<1}\mathbb{K}^s_\psi$, by Cauchy-Schwarz inequality and Sobolev inequality,
\begin{align*}a_* \norm{D^su^s}_{L^2(\mathbb{R}^d)}^2\leq&\int_{\mathbb{R}^d} AD^su^s\cdot D^su^s\\\leq&\int_{\mathbb{R}^d} AD^su^s\cdot D^sv^1-\langle F,v^1-u^s\rangle\\\leq&\frac{{a^*}^2\epsilon}{2}\norm{D^su^s}_{L^2(\mathbb{R}^d)}^2+\frac{1}{2\epsilon}\norm{D^sv^1}_{L^2(\mathbb{R}^d)}^2-\langle F,v^1\rangle+\frac{1}{2\epsilon'}\norm{F}_{H^{-s}(\Omega)}^2+\frac{\epsilon'}{2}\norm{D^su^s}_{L^2(\mathbb{R}^d)}^2\\\leq&\frac{a_*}{4}\norm{D^su^s}_{L^2(\mathbb{R}^d)}^2+\frac{{a^*}^2}{a_*}\norm{D^sv^1}_{L^2(\mathbb{R}^d)}^2-\langle F,v^1\rangle+\frac{c_\sigma^2}{a_*}\norm{F}_{H^{-\sigma}(\Omega)}^2+\frac{a_*}{4}\norm{D^su^s}_{L^2(\mathbb{R}^d)}^2\end{align*} by taking $\epsilon=\frac{a_*}{2{a^*}^2}$ and $\epsilon'=\frac{a_*}{2}$ and $c_\sigma^2$ may be chosen independent of $s$ for $0<s\leq1$, as a consequence of \eqref{ConvergenceEq1} for the dual norms $\norm{\cdot}_{H^{-s}(\Omega)}$. Therefore, we have \[\norm{D^su^s}_{L^2(\mathbb{R}^d)}\leq C,\] where the constant  $C=C(\sigma,a_*,a^*)>0$ is independent of $s\geq\sigma$.

Also by \eqref{ConvergenceEq1}, for $\sigma\leq s<1$, we have \begin{equation}\norm{D^\sigma u^s}_{L^2(\mathbb{R}^d)}\leq c_\sigma\norm{D^su^s}_{L^2(\mathbb{R}^d)}\leq C,\end{equation} for some constant $C$ independent of $s$, $\sigma\leq s<1$, and we may take a sequence \[D^su^s\xrightharpoonup[s\nearrow1]{}\zeta\quad \text{ in }[L^2(\mathbb{R}^d)]^d\text{-weak}\] for some $\zeta$. By compactness, since $u^s$ is also uniformly bounded in $H^\sigma_0(\Omega)$, there exists a subsequence and a limit $u\in L^2(\Omega)$ such that \[u^s\xrightarrow[s\nearrow1]{}u\quad\text{ strongly in }L^2(\Omega).\]

Now, by Lemma \ref{ConvergenceLemma2}, for all $\Phi\in [C_c^\infty(\Omega)]^d$, denoting by $\tilde{\Phi}$ the zero extension of $\Phi$ outside $\Omega$, \[D^s\cdot\tilde{\Phi}\to D\cdot\tilde{\Phi}\quad\text{ in }[L^2(\mathbb{R}^d)]^d,\] therefore \[\int_{\mathbb{R}^d} D^su^s\cdot\tilde{\Phi}=-\int_{\mathbb{R}^d} \tilde{u}^s(D^s\cdot\tilde{\Phi})\xrightarrow[s\nearrow1]{}-\int_{\mathbb{R}^d} \tilde{u}(D\cdot\tilde{\Phi}).\]  But by the a priori estimate on $D^su^s$, \[\left|\int_{\mathbb{R}^d} D^su^s\cdot\tilde{\Phi}\right|\leq C\norm{\Phi}_{[L^2(\Omega)]^d},\] which implies that \[\left|\int_{\mathbb{R}^d} \tilde{u}(D\cdot\tilde{\Phi})\right|\leq C\norm{\Phi}_{[L^2(\Omega)]^d}.\] This means that $D\tilde{u}\in[L^2(\mathbb{R}^d)]^d$, and since $\Omega$ has a Lipschitz boundary, $\widetilde{Du}=D\tilde{u}$, so $Du\in[L^2(\Omega)]^d$. Together with the first inequality in \eqref{ConvergenceEq1} which implies that $u\in \mathbb{K}^\sigma_\psi$ for any $\sigma<1$, we have $u\in\mathbb{K}^1_\psi$.

Furthermore, by Lemma \ref{ConvergenceLemma2}, $D^su\to \widetilde{Du}$ strongly in $[L^2(\mathbb{R}^d)]^d$ as $s\nearrow1$, so \[\int_{\mathbb{R}^d} D^s(u^s-u)\cdot\tilde{\Phi}=-\int_{\mathbb{R}^d}(\widetilde{u^s-u})(D^s\cdot\Phi)\to0,\] therefore \[\zeta=\lim_{s\nearrow1}D^su^s=\widetilde{Du}.\]

Finally, it remains to show that $u$ satisfies the obstacle problem for $s=1$. For any $v\in \mathbb{K}^1_\psi\subset\mathbb{K}^s_\psi$, since $D^su^s$ are uniformly bounded, we have, up to a subsequence and using the lower-semicontinuity of $A_{sym}=\frac{1}{2}(A+A^T)$, \begin{align*}\int_\Omega ADu\cdot Dv&=\int_{\mathbb{R}^d} A\widetilde{Du}\cdot \widetilde{Dv}\\&=\lim_{s\nearrow1}\int_{\mathbb{R}^d} AD^su^s\cdot D^sv\\&\geq\lim_{s\nearrow1}\langle F,v-u^s\rangle+\liminf_{s\nearrow1}\int_{\mathbb{R}^d} AD^su^s\cdot D^su^s\\&=\langle F,v-u\rangle+\liminf_{s\nearrow1}\int_{\mathbb{R}^d} A_{sym}D^su^s\cdot D^su^s\\&\geq\langle F,v-u\rangle+\int_{\mathbb{R}^d} A_{sym}\widetilde{Du}\cdot \widetilde{Du}\\&=\langle F,v-u\rangle+\int_\Omega ADu\cdot Du\end{align*} since $D^su^s\rightharpoonup \widetilde{Du}$ weakly in $[L^2(\mathbb{R}^d)]^d$ and $D^sv\to \widetilde{Dv}$ strongly in $[L^2(\mathbb{R}^d)]^d$. The conclusion follows by the compactness of the inclusion of $H^\sigma_0(\Omega)$ in $H^{\sigma'}_0(\Omega)$ when $\sigma>\sigma'$.

\end{proof}

\begin{remark}
The case with $A=\mathbb{I}$ corresponds to the obstacle problem for the fractional Laplacian and was first considered by Silvestre in \cite{SilvestreThesis}. Indeed, from \eqref{EquivNorms}, since \[\int_{\mathbb{R}^d} D^s u\cdot D^s v= \frac{c_{d,s}^2}{2}\int_{\mathbb{R}^d}\int_{\mathbb{R}^d}\frac{(u(x)-u(y))(v(x)-v(y))}{|x-y|^{d+2s}}\,dx\,dy,\] also holds for $u,v\in H^s_0(\Omega)$, Theorem \ref{ConvergenceThm} gives the convergence of the solution $u^s$ to the nonlocal obstacle problem \eqref{ObsPb}, which is equivalent, up to a constant, to \[u^s\in\mathbb{K}^s_\psi:\int_{\mathbb{R}^d} D^su^s\cdot D^s(v-u^s)\geq\langle F,v-u^s\rangle\quad\forall v\in\mathbb{K}^s_\psi\] towards the solution $u$ of the classical problem \[u\in\mathbb{K}^1_\psi:\int_\Omega Du\cdot D(v-u)\geq\langle F,v-u\rangle\quad\forall v\in\mathbb{K}^1_\psi.\]
\end{remark}

\begin{remark}
A similar convergence result as $s\nearrow1$ remains open for the general nonlocal operator $\mathcal{L}_a$.
\end{remark}

\section{Lewy-Stampacchia Inequalities and Local Regularity}

In this section, we take $f=f_\#$ and $\bm{f}=0$. We give a direct proof of the Lewy-Stampacchia inequalities. This will follow much of the approach of Section 5:3.3 in \cite{ObstacleProblems} or Chapter IV of \cite{KinderlehrerStampacchia}. The Lewy-Stampacchia inequalities will allow us to apply the results of \cite{KassmannDyda}\cite{FelsingerKassmannVoigt} to obtain local H\"older regularity of the solutions when $a$ is symmetric, and additional regularity on fractional Sobolev spaces when $\mathcal{L}_a=(-\Delta)^s$ using \cite{BiccariWarmaZuazua}.

\subsection{Bounded penalization of the obstacle problem in $H^s_0(\Omega)$}
Assume now that the obstacle $\psi\in H^s(\mathbb{R}^d)$, so that we may define $\mathcal{L}_a\psi\in H^{-s}(\Omega)$ by \eqref{LAdef} for any test function $v\in H^s_0(\Omega)$, and $\psi$ is such that the convex set $\mathbb{K}_\psi^s\neq\emptyset$. Consider the approximation to the obstacle problem, where the penalization is based on any nondecreasing Lipschitz function $\theta:\mathbb{R}\to[0,1]$ such that \[\theta\in C^{0,1}(\mathbb{R}),\quad\theta'\geq0,\quad\theta(+\infty)=1\quad\text{ and }\theta(t)=0\text{ for }t\leq0;\]\[\exists C_\theta>0:[1-\theta(t)]t\leq C_\theta,\quad t>0.\] Then, for any $\varepsilon>0$, consider the family of functions $\theta_\varepsilon(t)=\theta\left(\frac{t}{\varepsilon}\right),\quad t\in\mathbb{R}$, which converges as $\varepsilon\to0$ to the multi-valued Heaviside graph. Examples of such sequences of functions include $\theta(t)=t/(1+t)$, $\theta(t)=(2/\pi)\arctan t$, or from any non-decreasing Lipschitz function $0\leq\theta\leq1$ such that $\theta(t)=1$ for $t\geq t_*>0$.

Assume that \begin{equation}\label{fAssump}f,(\mathcal{L}_a\psi-f)^+\in L^{2^\#}(\Omega).\end{equation} For $\zeta\in L^{2^\#}(\Omega)$ such that \[\zeta\geq(\mathcal{L}_a\psi-f)^+\text{ a.e. in }\Omega,\] consider now the one parameter family of approximating semilinear problems in variational form
\[u_\varepsilon\in H^s_0(\Omega):\quad\mathcal{E}_a(u_\varepsilon,v)+\int_\Omega\zeta\theta_\varepsilon(u_\varepsilon-\psi)v=\int_\Omega (f+\zeta)v\quad\forall v\in H^s_0(\Omega).\tag{\ref{ObsPb}$_\varepsilon$}\] Arguing as in the proof of Theorem 5:3.1 of \cite{ObstacleProblems}, we have the following theorem.

\begin{theorem}\label{PenalizedProbSoln}
The unique solution $u_\varepsilon$ of the semilinear boundary value problem (\ref{ObsPb}$_\varepsilon$) is such that $u_\varepsilon\in\mathbb{K}^s_\psi$ for each $\varepsilon>0$, and it defines a monotone decreasing sequence, converging as $\varepsilon\to0$ to the solution $u$ of the obstacle problem \eqref{ObsPb} with the error estimate \begin{equation}\label{PenalizationErrorEst}\norm{u-u_\varepsilon}_{H^s_0(\Omega)}^2\leq\varepsilon(C_\theta/a_*)\norm{\zeta}_{L^1(\Omega)}.\end{equation}
\end{theorem}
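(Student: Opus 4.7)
\bigskip

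\noindent\textbf{Proof proposal.} My plan splits into four steps: existence and uniqueness of $u_\varepsilon$, the $\mathbb{K}^s_\psi$-admissibility $u_\varepsilon\geq\psi$, monotonicity in $\varepsilon$, and the energy estimate \eqref{PenalizationErrorEst} which already yields strong convergence.

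For the first step, the nonlinear form $v\mapsto \mathcal{E}_a(v,\cdot)+\int_\Omega \zeta\theta_\varepsilon(v-\psi)\,\cdot$ defines an operator $T_\varepsilon:H^s_0(\Omega)\to H^{-s}(\Omega)$ which is Lipschitz (since $\theta_\varepsilon$ is Lipschitz and $\zeta\in L^{2^\#}(\Omega)$, using the Sobolev embedding $H^s_0(\Omega)\hookrightarrow L^{2^*}(\Omega)$) and strongly monotone: indeed $\mathcal{L}_a$ is coercive by Theorem~\ref{DirichletFormThm}, while the penalization contribution is nonnegative because $\theta_\varepsilon$ is nondecreasing. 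The right-hand side $f+\zeta$ lies in $L^{2^\#}(\Omega)\subset H^{-s}(\Omega)$, so existence and uniqueness of $u_\varepsilon$ follow from the Browder--Minty theorem.

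For the second step, observe that on $\{u_\varepsilon<\psi\}$ one has $\theta_\varepsilon(u_\varepsilon-\psi)=0$, and rewrite \eqref{ObsPb}$_\varepsilon$ as
\[
\mathcal{E}_a(u_\varepsilon-\psi,v)=\int_\Omega (f+\zeta-\zeta\theta_\varepsilon(u_\varepsilon-\psi))v-\langle\mathcal{L}_a\psi,v\rangle.
\]
Testing with the nonnegative function $w:=(\psi-u_\varepsilon)^+\wedge v_0\in H^s_0(\Omega)$ for some fixed $v_0\in\mathbb{K}^s_\psi$ (which handles the extension issue of $\psi$ outside $\Omega$, and can then be refined by a truncation/monotone convergence argument to the full $(\psi-u_\varepsilon)^+$), the right-hand side becomes $\int_\Omega(f+\zeta-\mathcal{L}_a\psi)w\geq 0$ because $\zeta\geq(\mathcal{L}_a\psi-f)^+$ and $w$ is supported where $\theta_\varepsilon$ vanishes. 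On the other hand, the strict T-monotonicity (Theorem~\ref{BTmonotone}) applied to $v:=u_\varepsilon-\psi$ yields $\mathcal{E}_a(u_\varepsilon-\psi,-(u_\varepsilon-\psi)^-)\leq -a_*\|D^s(u_\varepsilon-\psi)^-\|_{L^2(\mathbb{R}^d)}^2$. Combining the two inequalities forces $(u_\varepsilon-\psi)^-\equiv 0$, hence $u_\varepsilon\in\mathbb{K}^s_\psi$. The monotonicity in $\varepsilon$ is analogous: for $\varepsilon_1<\varepsilon_2$, the pointwise inequality $\theta_{\varepsilon_1}\geq\theta_{\varepsilon_2}$ (which follows from $\theta$ nondecreasing with $\theta(t)=0$ for $t\leq 0$) combined with monotonicity of each $\theta_{\varepsilon_i}$ gives, after subtracting the two equations and testing with $(u_{\varepsilon_1}-u_{\varepsilon_2})^+$, that the penalization contribution has the sign making T-monotonicity force $u_{\varepsilon_1}\leq u_{\varepsilon_2}$ a.e.

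Finally, for the error estimate, I plug $v-u_\varepsilon$ (for any $v\in\mathbb{K}^s_\psi$) into \eqref{ObsPb}$_\varepsilon$ and rearrange:
\[
\mathcal{E}_a(u_\varepsilon,v-u_\varepsilon)-\int_\Omega f(v-u_\varepsilon)=\int_\Omega \zeta[1-\theta_\varepsilon(u_\varepsilon-\psi)](v-u_\varepsilon).
\]
Writing $v-u_\varepsilon=(v-\psi)-(u_\varepsilon-\psi)$, the first summand is nonnegative since $v\geq\psi$, $\zeta\geq 0$ and $\theta_\varepsilon\leq 1$, while the second is controlled using the structural bound $[1-\theta_\varepsilon(t)]t=\varepsilon[1-\theta(t/\varepsilon)](t/\varepsilon)\leq\varepsilon C_\theta$ for $t>0$, yielding
\[
\mathcal{E}_a(u_\varepsilon,v-u_\varepsilon)\geq\int_\Omega f(v-u_\varepsilon)-\varepsilon C_\theta\|\zeta\|_{L^1(\Omega)}\qquad\forall v\in\mathbb{K}^s_\psi.
\]
Choosing $v=u$ here (the unique solution of \eqref{ObsPb} provided by Theorem~\ref{ObsPbThm}) and $v=u_\varepsilon$ in \eqref{ObsPb}, adding the two resulting inequalities collapses the $f$ terms and leaves
\[
\mathcal{E}_a(u_\varepsilon-u,u_\varepsilon-u)\leq\varepsilon C_\theta\|\zeta\|_{L^1(\Omega)},
\]
and coercivity $\mathcal{E}_a(w,w)\geq a_*\|w\|_{H^s_0(\Omega)}^2$ immediately gives \eqref{PenalizationErrorEst}, which in turn proves strong convergence $u_\varepsilon\to u$ in $H^s_0(\Omega)$. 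The main technical obstacle I anticipate is Step~2 (justifying $u_\varepsilon\geq\psi$), because the test function $(\psi-u_\varepsilon)^+$ need not lie in $H^s_0(\Omega)$ when $\psi$ does not vanish outside $\Omega$; the truncation-by-$v_0$ device above is exactly what circumvents this, but it requires a monotone convergence step to pass to the full positive part.
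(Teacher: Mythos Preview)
Your proposal is correct and follows essentially the same route as the paper, which does not give a self-contained proof but simply invokes Theorem~5:3.1 of \cite{ObstacleProblems}; the same argument is spelled out explicitly in the paper's proof of Theorem~\ref{LewyStampacchia2} for the two-obstacle case, and your Steps~2--4 match that structure line by line. Your extra care in Step~2 about whether $(\psi-u_\varepsilon)^+$ lies in $H^s_0(\Omega)$ is in fact more scrupulous than the paper itself, which simply asserts $(\psi-u_\varepsilon)^+\in H^s_0(\Omega)$ without comment in the analogous two-obstacle computation.
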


\begin{remark} 
\begin{enumerate}[label=(\roman*)]
    \item We can formally interpret the variational equation (\ref{ObsPb}$_\varepsilon$) as corresponding to the following semilinear boundary value problem:  \[\mathcal{L}_au_\varepsilon+\zeta\theta_\varepsilon(u_\varepsilon-\psi)=f+\zeta\text{ in }\Omega, \quad u_\varepsilon=0 \text{ on }\mathbb{R}^d\backslash\Omega.\]
    \item One can also consider the translated penalization, given by \[\bar{\theta}_\varepsilon(t)=1-\theta\left(-\frac{t}{\varepsilon}\right)=\bar{\theta}\left(-\frac{t}{\varepsilon}\right)\] for $t\in\mathbb{R}$ and $\varepsilon>0$, to approach the solution of the obstacle problem from below using monotonicity. Then we have that the  unique solution $\bar{u}_\varepsilon$ of the penalized problem \[\bar{u}_\varepsilon\in H^s_0(\Omega):\quad\mathcal{E}_a(\bar{u}_\varepsilon,v)+\int_\Omega\zeta\bar{\theta}_\varepsilon(\bar{u}_\varepsilon-\psi)v=\int_\Omega (f+\zeta)v\quad\forall v\in H^s_0(\Omega).\tag{\ref{ObsPb}$_\varepsilon^\sim$}\] defines a monotone increasing sequence converging to the solution $u$ of the obstacle problem \eqref{ObsPb} weakly in $H^s_0(\Omega)$.
    \item For special choices of the function $\theta$, we can estimate the uniform convergence for the approximations of $u$ by penalization. Suppose that, in addition to the conditions on $\theta$, \[\theta(t)=1\text{ for }t\geq1,\] then the approximating solution $u_\varepsilon$ of (\ref{ObsPb}$_\varepsilon$) verifies, for each $\varepsilon>0$, \[u_\varepsilon-\varepsilon\leq u\leq u_\varepsilon\text{ a.e. in }\Omega.\] If, additionally, $\theta$ verifies \[t\leq\theta(t)\leq1\text{ if }0\leq t\leq1,\] the approximating solution $\bar{u}_\varepsilon$ yields \[\bar{u}_\varepsilon\leq u\leq u_\varepsilon\text{ and }0\leq u_\varepsilon-\bar{u}_\varepsilon\leq\varepsilon\text{ a.e. in }\Omega.\]
\end{enumerate}

\end{remark}

From this theorem, we can derive the Lewy-Stampacchia inequality.

\begin{theorem}[Lewy-Stampacchia inequality]\label{LewyStampacchia}
Under the assumptions \eqref{fAssump}, the solution $u$ of the obstacle problem satisfies \[f\leq \mathcal{L}_au\leq f\vee \mathcal{L}_a\psi\quad\text{ a.e. in }\Omega.\] In particular,  $\mathcal{L}_au\in L^{2^\#}(\Omega)$.
\end{theorem}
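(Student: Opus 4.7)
The plan is to extract the Lewy--Stampacchia inequalities from the bounded penalization scheme of Theorem \ref{PenalizedProbSoln} by making a specific choice of the majorant $\zeta$ and then passing to the limit. Under assumption \eqref{fAssump}, the natural candidate is
\[
\zeta = (\mathcal{L}_a\psi - f)^+ \in L^{2^\#}(\Omega),
\]
which is admissible since it satisfies $\zeta \geq (\mathcal{L}_a\psi-f)^+$ with equality. With this choice, the penalized identity \eqref{ObsPb}$_\varepsilon$ reads, in $H^{-s}(\Omega)$,
\[
\mathcal{L}_a u_\varepsilon \;=\; f + \zeta\bigl(1-\theta_\varepsilon(u_\varepsilon-\psi)\bigr).
\]

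First I would obtain the two-sided bound at the penalized level. Because $\theta_\varepsilon$ takes values in $[0,1]$ and $\zeta \geq 0$ a.e. in $\Omega$, the right-hand side is an $L^{2^\#}(\Omega)$ function which is sandwiched pointwise between $f$ and $f+\zeta$. Using the elementary identity $f + (\mathcal{L}_a\psi-f)^+ = f\vee\mathcal{L}_a\psi$, this yields
\[
f \;\leq\; \mathcal{L}_a u_\varepsilon \;\leq\; f \vee \mathcal{L}_a \psi \quad\text{a.e. in }\Omega,
\]
for every $\varepsilon>0$. In particular, $\mathcal{L}_a u_\varepsilon$ is uniformly bounded in $L^{2^\#}(\Omega)$, independently of $\varepsilon$.

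Next I would pass to the limit $\varepsilon \to 0$. From \eqref{PenalizationErrorEst} we have $u_\varepsilon \to u$ strongly in $H^s_0(\Omega)$, hence $\mathcal{L}_a u_\varepsilon \to \mathcal{L}_a u$ in $H^{-s}(\Omega)$. By the uniform $L^{2^\#}$-bound, a subsequence of $\mathcal{L}_a u_\varepsilon$ converges weakly in $L^{2^\#}(\Omega)$ to some limit $g$, which must coincide with $\mathcal{L}_a u$ by uniqueness of the limit in $H^{-s}(\Omega)$. Since the convex set $\{g\in L^{2^\#}(\Omega): f\leq g\leq f\vee\mathcal{L}_a\psi \text{ a.e.}\}$ is closed in the weak topology, the inequalities pass to the limit, giving $f \leq \mathcal{L}_a u \leq f\vee\mathcal{L}_a\psi$ a.e. in $\Omega$, together with $\mathcal{L}_a u \in L^{2^\#}(\Omega)$.

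The main obstacle is the identification of the weak $L^{2^\#}$-limit of $\mathcal{L}_a u_\varepsilon$ with $\mathcal{L}_a u$ and the preservation of the a.e. pointwise inequalities in the limit. This delicate point is handled precisely by the combination of the a priori $L^{2^\#}$-bound coming from the penalization identity with the $H^{-s}(\Omega)$-convergence coming from \eqref{PenalizationErrorEst}, so that the two convergences together upgrade the variational bounds to pointwise inequalities of $L^{2^\#}(\Omega)$ functions. Note that the lower bound $\mathcal{L}_a u \geq f$ could alternatively be read directly off the variational inequality \eqref{ObsPb} by testing with $v = u + \phi$ for $\phi \in H^s_0(\Omega)$, $\phi \geq 0$, but the penalization route has the advantage of yielding both bounds and the regularity $\mathcal{L}_a u \in L^{2^\#}(\Omega)$ in one shot.
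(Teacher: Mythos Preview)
Your proposal is correct and follows essentially the same approach as the paper: choose $\zeta=(\mathcal{L}_a\psi-f)^+$ in the penalized problem, read off the pointwise sandwich $f\leq\mathcal{L}_au_\varepsilon\leq f+\zeta=f\vee\mathcal{L}_a\psi$ from the identity $\mathcal{L}_au_\varepsilon=f+\zeta(1-\theta_\varepsilon)$ and $0\leq1-\theta_\varepsilon\leq1$, then let $\varepsilon\to0$ using Theorem~\ref{PenalizedProbSoln}. The only minor difference is in the limit passage: the paper tests the inequalities against arbitrary $v\in H^s_0(\Omega)$, $v\geq0$, and lets $\varepsilon\to0$ in the integrated form (using $\mathcal{E}_a(u_\varepsilon,v)\to\mathcal{E}_a(u,v)$), whereas you argue via weak $L^{2^\#}$-compactness and weak closedness of the order interval; both are valid and yield the same conclusion.
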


\begin{proof}
Choosing $\zeta=(\mathcal{L}_a\psi-f)^+$ in (\ref{ObsPb}$_\varepsilon$), and making use of the property of $\theta$ that $0\leq1-\theta_\varepsilon\leq1$, then for any $\varepsilon>0$ and any $v\in H^s_0(\Omega)$, $v\geq0$, we have \[\int_\Omega\mathcal{L}_au_\varepsilon v = \int_\Omega  [f+(\mathcal{L}_a\psi-f)^+(1-\theta_\varepsilon)]v \leq \int_\Omega [f+(\mathcal{L}_a\psi-f)^+]v\] from the variational form, and on the other hand \[\mathcal{L}_au_\varepsilon=f+\zeta-\zeta\theta_\varepsilon=f+\zeta(1-\theta_\varepsilon)\geq f\] holds a.e. in $\Omega$. Together, these give \[\int_\Omega f v \leq \int_\Omega \mathcal{L}_au_\varepsilon v \leq \int_\Omega [f+(\mathcal{L}_a\psi-f)^+] v =\int_\Omega [f\vee(\mathcal{L}_a\psi)]v.\] Letting $\varepsilon\to0$, this holds for $u$. Since $v$ is arbitrary, we have the result.
\end{proof}

\begin{remark}
The Lewy-Stampacchia inequalities for nonlocal obstacle problems have been first obtained in \cite{ServadeiValdinoci2013RMILewyStampacchia} for a class of symmetric integrodifferential operators $\mathcal{L}_K$, with even kernels $K$, which are also strictly T-monotone and include the fractional Laplacian, and with $f$ and $\mathcal{L}_K\psi\in L^\infty(\Omega)$.
\end{remark}

\subsection{Multiple obstacles problem}
\subsubsection{Two obstacles problem}
We next consider the two obstacles problem with a Dirichlet boundary condition in a bounded domain $\Omega\subset\mathbb{R}^d$ with Lipschitz boundary, which consists of finding $u\in\mathbb{K}^s_{\psi,\varphi}$ such that \begin{equation}\label{2Obstacles}\mathcal{E}_a(u,v-u)\geq\int_\Omega f(v-u),\quad\forall v\in \mathbb{K}^s_{\psi,\varphi},\end{equation} where $f\in L^{2^\#}(\Omega)$ and \begin{equation}\label{2ObstaclesConvex}\mathbb{K}^s_{\psi,\varphi}=\{v\in H^s_0(\Omega):\psi\leq v\leq\varphi\text{ a.e. in }\Omega\}.\end{equation} Assume that $\varphi$ and $\psi$ are measurable and admissible obstacles in $\Omega$ such that $\mathbb{K}^s_{\psi,\varphi}\neq\emptyset$. When $\varphi,\psi\in H^s(\mathbb{R}^d)$, a sufficient condition for these two assumptions to hold is to assume $\varphi\geq\psi$ a.e. in $\Omega$ and $\varphi\geq0\geq\psi$ a.e. in $\Omega^c$.

\begin{theorem} The two obstacles problem has a unique solution. Moreover, if $u=u(f,\varphi,\psi)$ and $\hat{u}=u(\hat{f},\hat{\varphi},\hat{\psi})$ are solutions in $\mathbb{K}^s_{\psi,\varphi}$ and in $\mathbb{K}^s_{\hat{\psi},\hat{\varphi}}$, respectively, of the two obstacles problem, then \[f\geq\hat{f},\varphi\geq\hat{\varphi},\psi\geq\hat{\psi}\text{ implies }u\geq\hat{u}\text{ a.e. in }\Omega.\] In addition, if $f=\hat{f}$, we have the $L^\infty$ estimate \[\norm{u-\hat{u}}_{L^\infty(\Omega)}\leq\|\psi-\hat{\psi}\|_{L^\infty(\Omega)}+\norm{\varphi-\hat{\varphi}}_{L^\infty(\Omega)}.\]
\end{theorem}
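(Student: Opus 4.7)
Existence and uniqueness follow directly from the Stampacchia theorem, since $\mathbb{K}^s_{\psi,\varphi}$ is a nonempty closed convex subset of $H^s_0(\Omega)$ and $\mathcal{E}_a$ is continuous and coercive on $H^s_0(\Omega)\times H^s_0(\Omega)$ by Theorem \ref{DirichletFormThm}.

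For the comparison statement, the plan is to exploit the lattice structure of the two convex sets. Under $\psi\geq\hat\psi$ and $\varphi\geq\hat\varphi$ one has $u\vee\hat u\in\mathbb{K}^s_{\psi,\varphi}$ (because $u\vee\hat u\geq u\geq\psi$ and $u\vee\hat u\leq \varphi\vee\hat\varphi=\varphi$), and symmetrically $u\wedge\hat u\in\mathbb{K}^s_{\hat\psi,\hat\varphi}$. Testing the variational inequality for $u$ with $v=u+(\hat u-u)^+$ and that for $\hat u$ with $v=\hat u-(\hat u-u)^+$, and subtracting, yields
\[
\mathcal{E}_a\bigl(\hat u-u,(\hat u-u)^+\bigr)\leq \int_\Omega (\hat f-f)(\hat u-u)^+\leq 0,
\]
whereupon strict T-monotonicity of $\mathcal{L}_a$ (Theorem \ref{BTmonotone}) forces $(\hat u-u)^+=0$, giving $u\geq\hat u$ a.e.

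For the $L^\infty$ estimate, assuming $f=\hat f$, I would set $\kappa:=\|\psi-\hat\psi\|_{L^\infty(\Omega)}+\|\varphi-\hat\varphi\|_{L^\infty(\Omega)}$ (finite, otherwise the claim is trivial) and $w:=(u-\hat u-\kappa)^+$; since $t\mapsto(t-\kappa)^+$ is a normal contraction fixing $0$, the corollary to Theorem \ref{DirichletFormThm} ensures $w\in H^s_0(\Omega)$. The shifted candidates
\[
v:=u\wedge(\hat u+\kappa)=u-w,\qquad \hat v:=\hat u\vee(u-\kappa)=\hat u+w
\]
are admissible in $\mathbb{K}^s_{\psi,\varphi}$ and $\mathbb{K}^s_{\hat\psi,\hat\varphi}$ respectively: the conditions $v\geq\psi$ and $\hat v\leq\hat\varphi$ reduce to $\hat\psi+\kappa\geq\psi$ and $\varphi-\kappa\leq\hat\varphi$, both of which hold by the definition of $\kappa$. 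Plugging $v,\hat v$ into the two variational inequalities and subtracting reduces the matter to showing that $\mathcal{E}_a(u-\hat u,w)\leq 0$ forces $w\equiv 0$.

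The main obstacle is that strict T-monotonicity cannot be invoked directly, since $u-\hat u-\kappa\notin H^s_0(\Omega)$ (its extension outside $\Omega$ equals $-\kappa$). I would circumvent this by the decomposition $g:=u-\hat u=w+r$ with $r:=g\wedge\kappa\in H^s_0(\Omega)$, which gives
\[
\mathcal{E}_a(g,w)=\mathcal{E}_a(w,w)+\mathcal{E}_a(r,w)\geq a_*\|D^sw\|_{L^2(\mathbb{R}^d)}^2+\mathcal{E}_a(r,w).
\]
The cross term $\mathcal{E}_a(r,w)$ is non-negative by a pointwise argument: on $\{w(x)>0\}$ we have $r(x)=\kappa\geq r(y)$ for every $y$, so the integrand $w(x)(r(x)-r(y))a(x,y)$ is pointwise non-negative, while on $\{w(x)=0\}$ it vanishes. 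Consequently $w\equiv 0$, i.e.\ $u-\hat u\leq\kappa$ a.e., and swapping the roles of $u$ and $\hat u$ delivers the full bound.
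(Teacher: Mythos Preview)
Your argument is correct and follows the same route the paper points to (Stampacchia for existence/uniqueness, strict T-monotonicity of $\mathcal{L}_a$ via the lattice test functions $u\vee\hat u$, $u\wedge\hat u$ for the comparison, and the shifted test functions $u\wedge(\hat u+\kappa)$, $\hat u\vee(u-\kappa)$ for the $L^\infty$ bound). The paper's own proof merely cites the classical one-obstacle argument without elaboration, so your explicit treatment of the constant shift --- the decomposition $g=w+r$ with $r=g\wedge\kappa$ and the pointwise verification that $\mathcal{E}_a(r,w)\geq0$ --- is a genuine improvement in rigor: in the nonlocal setting one cannot simply say ``$D^s\kappa=0$'' for $\kappa\notin H^s_0(\Omega)$, and your cross-term analysis is exactly what is needed to close the gap (the paper itself relies on the analogous manoeuvre, without comment, in the proof of Theorem~\ref{LewyStampacchia2}).
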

\begin{proof}
The existence and uniqueness follows as, in the previous sections, from the monotonicity, coercivity, continuity and boundedness of the operator $\mathcal{L}_a$ and the Stampacchia theorem. The comparison property follows also as previous by the T-monotonicity of $\mathcal{L}_a$.

The $L^\infty$ estimate follows as well, as in the classical one obstacle problem.
\end{proof}

Corresponding to the two obstacles problem, we also have the Lewy-Stampacchia inequality.

\begin{theorem}\label{LewyStampacchia2}
The solution $u$ of the two obstacles problem, for $f,\mathcal{L}_a\varphi,\mathcal{L}_a\psi\in L^{2^\#}(\Omega)$ such that $\varphi,\psi\in H^s(\mathbb{R}^d)$ are compatible and $\mathcal{L}_a\varphi,\mathcal{L}_a\psi$ are given by \eqref{LAdef}, satisfies \begin{equation}\label{LewyStampacchia2eq}f\wedge\mathcal{L}_a\varphi\leq \mathcal{L}_au\leq f\vee\mathcal{L}_a\psi\quad\text{ a.e. in }\Omega,\end{equation} and therefore $\mathcal{L}_au\in L^{2^\#}(\Omega)$.
\end{theorem}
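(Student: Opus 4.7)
The plan is to mimic the bounded-penalization approach used for Theorem \ref{LewyStampacchia}, but now introduce a \emph{double} penalization that pushes $u_\varepsilon$ up off the lower obstacle $\psi$ and down off the upper obstacle $\varphi$ simultaneously. Set $\zeta := (\mathcal{L}_a \psi - f)^+$ and $\eta := (f - \mathcal{L}_a \varphi)^+$, both of which lie in $L^{2^\#}(\Omega)$ by hypothesis. With $\theta_\varepsilon$ as in Section 4.1, I would consider the semilinear problem: find $u_\varepsilon \in H^s_0(\Omega)$ such that
\begin{equation*}
\mathcal{E}_a(u_\varepsilon, v) + \int_\Omega \zeta\, \theta_\varepsilon(u_\varepsilon - \psi)\, v \,-\, \int_\Omega \eta\, \theta_\varepsilon(\varphi - u_\varepsilon)\, v = \int_\Omega (f + \zeta - \eta)\, v, \qquad \forall v \in H^s_0(\Omega).
\end{equation*}
Existence and uniqueness of $u_\varepsilon$ follow as in Theorem \ref{PenalizedProbSoln} by the Browder-Minty machinery, since the left hand side defines a bounded, coercive, strictly monotone operator on $H^s_0(\Omega)$: the first penalty term is monotone nondecreasing in $u_\varepsilon$, while the second, due to the minus sign coupled with $\theta_\varepsilon(\varphi - u_\varepsilon)$ being \emph{non-increasing} in $u_\varepsilon$, is also monotone in the correct direction.

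The key observation is that rewriting the penalized equation pointwise gives
\begin{equation*}
\mathcal{L}_a u_\varepsilon \,=\, f + \zeta\bigl(1 - \theta_\varepsilon(u_\varepsilon - \psi)\bigr) \,-\, \eta\bigl(1 - \theta_\varepsilon(\varphi - u_\varepsilon)\bigr) \quad \text{a.e. in } \Omega.
\end{equation*}
Since $0 \leq 1 - \theta_\varepsilon \leq 1$, the first correction term is nonnegative and bounded by $\zeta$, and the second is nonpositive and bounded in modulus by $\eta$. Hence the two-sided bound
\begin{equation*}
f - \eta \,\leq\, \mathcal{L}_a u_\varepsilon \,\leq\, f + \zeta \quad \text{a.e. in } \Omega
\end{equation*}
holds for every $\varepsilon > 0$, which, by the choice of $\zeta$ and $\eta$, is exactly $f \wedge \mathcal{L}_a \varphi \leq \mathcal{L}_a u_\varepsilon \leq f \vee \mathcal{L}_a \psi$. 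Passing to the limit in the duality pairing $\langle \mathcal{L}_a u_\varepsilon, v\rangle$ against nonnegative test functions $v \in H^s_0(\Omega)$ then transfers the inequality to the limit $u$.

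The main work is in showing that $u_\varepsilon \to u$ in $H^s_0(\Omega)$, where $u$ is the unique solution of \eqref{2Obstacles}. To do this I would first test the penalized problem with $v = (u_\varepsilon - \varphi)^+$ and with $v = -(\psi - u_\varepsilon)^+$ (both of which lie in $H^s_0(\Omega)$ under the compatibility of the obstacles). Using the strict T-monotonicity of $\mathcal{E}_a$ (Theorem \ref{BTmonotone}), together with the signs of the penalty terms on these sets, one shows that $\|(u_\varepsilon - \varphi)^+\|_{H^s_0(\Omega)}$ and $\|(\psi - u_\varepsilon)^+\|_{H^s_0(\Omega)}$ vanish at a controlled rate in $\varepsilon$, so that any weak limit point lies in $\mathbb{K}^s_{\psi,\varphi}$. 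Then one tests with $v = u_\varepsilon - w$ for an arbitrary $w \in \mathbb{K}^s_{\psi,\varphi}$ and uses the signs of $\zeta\theta_\varepsilon(u_\varepsilon-\psi)(u_\varepsilon - w)$ and $-\eta\theta_\varepsilon(\varphi - u_\varepsilon)(u_\varepsilon - w)$ on $\{u_\varepsilon \leq \psi\}\cup\{u_\varepsilon \geq \varphi\}$ (negligible in the limit) to recover the variational inequality \eqref{2Obstacles} in the limit. Uniqueness then forces the whole family to converge. The delicate step is this last one: unlike the one-obstacle situation where monotonicity of $u_\varepsilon$ in $\varepsilon$ was available, here the two penalizations interact and monotone convergence is not automatic, so I expect to rely on the energy estimate plus weak compactness together with a Minty-type argument to identify the limit.
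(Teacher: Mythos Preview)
Your proposal is correct and follows the same double bounded-penalization approach as the paper: the penalized problem you wrote down is exactly the one used there (with your $\zeta,\eta$ playing the role of $\zeta_\psi,\zeta_\varphi$), and the Lewy--Stampacchia inequality is extracted from the pointwise identity $\mathcal{L}_a u_\varepsilon = f + \zeta(1-\theta_\varepsilon) - \eta(1-\theta_\varepsilon)$ in the same way.

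Where you diverge is in the convergence step. The paper imposes the extra condition $\theta_\varepsilon(t)=1$ for $t\geq\varepsilon$ and proves the \emph{pointwise} bounds $\psi\leq u_\varepsilon\leq\varphi+\varepsilon$ directly: testing with $(\psi-u_\varepsilon)^+$ gives $u_\varepsilon\geq\psi$ exactly (not merely at a vanishing rate), while testing with $(u_\varepsilon-\varphi-\varepsilon)^+$ --- note the $\varepsilon$-shift, which is needed so that on the support of this test function one has $u_\varepsilon-\psi\geq\varepsilon$ and hence $\theta_\varepsilon(u_\varepsilon-\psi)=1$, killing the $\zeta$-term --- gives $u_\varepsilon\leq\varphi+\varepsilon$. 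With $u_\varepsilon$ essentially in the convex set, the paper then derives the quantitative estimate $a_*\|u-u_\varepsilon\|_{H^s_0(\Omega)}^2\leq\varepsilon C_\theta\int_\Omega(\zeta+\eta)$ by a direct energy comparison with the variational inequality, bypassing any compactness or Minty-type argument. Your weak-compactness route would also reach the conclusion, but it is less direct, forfeits the explicit rate, and your unshifted test function $(u_\varepsilon-\varphi)^+$ does not close cleanly for the reason just noted.
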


\begin{proof}
The proof is similar to that of the classical case $s=1$, now for two obstacles. Consider the penalized problem given by \[u_\varepsilon\in H^s_0(\Omega):\quad\langle \mathcal{L}_au_\varepsilon,v\rangle+\int_\Omega\zeta_\psi\theta_\varepsilon(u_\varepsilon-\psi)v-\int_\Omega\zeta_\varphi\theta_\varepsilon(\varphi-u_\varepsilon)v=\int_\Omega (f+\zeta_\psi-\zeta_\varphi)v\quad\forall v\in H^s_0(\Omega)\] for \[\zeta_\psi\geq(\mathcal{L}_a\psi-f)^+,\quad\zeta_\varphi\geq(\mathcal{L}_a\varphi-f)^-,\] with $\theta_\varepsilon(t)=1$ for $t\geq\varepsilon$. Then, there is a unique solution $u_\varepsilon\in H^s_0(\Omega)$ such that $\psi\leq u_\varepsilon\leq\varphi+\varepsilon$ for each $\varepsilon>0$. Indeed, we obtain the existence and uniqueness of the solution by the Stampacchia theorem as before. To show that $\psi\leq u_\varepsilon\leq\varphi+\varepsilon$, we have \[\langle\mathcal{L}_a\psi,v\rangle\leq\langle(\mathcal{L}_a\psi-f)^++f,v\rangle\leq\int_\Omega(\zeta_\psi+f)v\quad\forall v\in H^s_0(\Omega),v\geq0,
\quad\text{ and }\]\[ \langle\mathcal{L}_a\varphi,v\rangle=\langle (\mathcal{L}_a\varphi-f)+f,v\rangle\geq\langle f-(\mathcal{L}_a\varphi-f)^-,v\rangle\geq\int_\Omega (f-\zeta_\varphi)v\quad\forall v\in H^s_0(\Omega),v\geq0.\] Taking $v=(\psi-u_\varepsilon)^+\in H^s_0(\Omega)$ and using the strict T-monotonicity of $\mathcal{L}_a$, we have \begin{align*} a_*\norm{(\psi-u_\varepsilon)^+}_{H^s_0(\Omega)}^2\leq&\mathcal{E}_a(\psi,(\psi-u_\varepsilon)^+)-\mathcal{E}_a(u_\varepsilon,(\psi-u_\varepsilon)^+)\\\leq&\int_\Omega (f+\zeta_\psi)(\psi-u_\varepsilon)^+-\int_\Omega \{f+\zeta_\psi[1-\theta_\varepsilon(u_\varepsilon-\psi)]-\zeta_\varphi[1-\theta_\varepsilon(\varphi-u_\varepsilon)]\}(\psi-u_\varepsilon)^+\\=&\int_\Omega\zeta_\psi\theta_\varepsilon(u_\varepsilon-\psi)(\psi-u_\varepsilon)^+-\int_\Omega \{\zeta_\varphi[1-\theta_\varepsilon(\varphi-u_\varepsilon)]\}(\psi-u_\varepsilon)^+\\\leq&0\end{align*} because the first term is non-positive, while the factors in the second term are all non-negative. Therefore, $u_\varepsilon\geq\psi$. Similarly, taking $v=(u_\varepsilon-\varphi-\varepsilon)^+\in H^s_0(\Omega)$ gives 
\begin{align*}a_*\norm{(u_\varepsilon-\varphi-\varepsilon)^+}_{H^s_0(\Omega)}^2=&\,a_*\norm{D^s(u_\varepsilon-\varphi-\varepsilon)^+}_{L^2(\mathbb{R}^d)}^2\\\leq&\,\mathcal{E}_a((u_\varepsilon-\varphi-\varepsilon)^+,(u_\varepsilon-\varphi-\varepsilon)^+)\\\leq&\,\mathcal{E}_a(u_\varepsilon-\varphi-\varepsilon,(u_\varepsilon-\varphi-\varepsilon)^+)\\\leq&\,\mathcal{E}_a(u_\varepsilon-\varphi,(u_\varepsilon-\varphi-\varepsilon)^+)\\=&\,\left[\mathcal{E}_a(u_\varepsilon,(u_\varepsilon-\varphi-\varepsilon)^+)-\mathcal{E}_a(\varphi,(u_\varepsilon-\varphi-\varepsilon)^+)\right]\\\leq&\,\int_\Omega \{f+\zeta_\psi[1-\theta_\varepsilon(u_\varepsilon-\psi)]-\zeta_\varphi[1-\theta_\varepsilon(\varphi-u_\varepsilon)]\}(u_\varepsilon-\varphi-\varepsilon)^+\\&\,-\int_\Omega (f-\zeta_\varphi)(u_\varepsilon-\varphi-\varepsilon)^+\\=&\,\int_\Omega\{\zeta_\psi[1-\theta_\varepsilon(u_\varepsilon-\psi)]\}(u_\varepsilon-\varphi-\varepsilon)^++\int_\Omega [\zeta_\varphi\theta_\varepsilon(\varphi-u_\varepsilon)](u_\varepsilon-\varphi-\varepsilon)^+\\=&\,\int_\Omega\zeta_\psi[1-\theta_\varepsilon(u_\varepsilon-\psi)](u_\varepsilon-\varphi-\varepsilon)^+\\\leq&\,\int_\Omega\zeta_\psi[1-\theta_\varepsilon(u_\varepsilon-\psi)](u_\varepsilon-\psi-\varepsilon)^+\text{ since }\varphi\geq\psi\\=&\,0
\end{align*}Therefore, $u_\varepsilon\leq\varphi+\varepsilon$.

Now, we can show that $u_\varepsilon\to u$, so $\psi\leq u_\varepsilon\leq\varphi+\varepsilon$ converges to $\psi\leq u\leq\varphi$. Take $v=w-u_\varepsilon$ in the penalized problem above for arbitrary $w\in\mathbb{K}^s_{\psi,\varphi}$, then \begin{align*}\mathcal{E}_a(u_\varepsilon,w-u_\varepsilon)=&\int_\Omega f(w-u_\varepsilon)+\int_\Omega\zeta_\psi[1-\theta_\varepsilon(u_\varepsilon-\psi)](w-u_\varepsilon)-\int_\Omega\zeta_\varphi[1-\theta_\varepsilon(\varphi-u_\varepsilon)](w-u_\varepsilon)\\\geq&\int_\Omega f(w-u_\varepsilon)+\int_\Omega\zeta_\psi[1-\theta_\varepsilon(u_\varepsilon-\psi)](\psi-u_\varepsilon)-\int_\Omega\zeta_\varphi[1-\theta_\varepsilon(\varphi-u_\varepsilon)](\varphi-u_\varepsilon)\\=&\int_\Omega f(w-u_\varepsilon)-\varepsilon\int_\Omega\zeta_\psi[1-\theta_\varepsilon(u_\varepsilon-\psi)]\frac{u_\varepsilon-\psi}{\varepsilon}-\varepsilon\int_\Omega\zeta_\varphi[1-\theta_\varepsilon(\varphi-u_\varepsilon)]\frac{\varphi-u_\varepsilon}{\varepsilon}\\\geq&\int_\Omega f(w-u_\varepsilon)-\varepsilon C_\theta\int_\Omega(\zeta_\psi+\zeta_\varphi).\end{align*} Now, taking $w=u\in\mathbb{K}^s_{\psi,\varphi}$, we obtain \[\mathcal{E}_a(u_\varepsilon,u-u_\varepsilon)\geq\int_\Omega f(u-u_\varepsilon)-\varepsilon C_\theta\int_\Omega(\zeta_\psi+\zeta_\varphi),\] but taking $v=u_\varepsilon\in\mathbb{K}^s_\psi$ in the original obstacle problem \eqref{ObsPb}, we have \[\mathcal{E}_a(u,u_\varepsilon-u)\geq \int_\Omega  f(u_\varepsilon-u).\] Taking the difference of these two equations, by the linearity of $\mathcal{E}_a$, we have \[\mathcal{E}_a(u_\varepsilon-u,u_\varepsilon-u)\leq\varepsilon C_\theta\int_\Omega(\zeta_\psi+\zeta_\varphi).\] Using the ellipticity of $a$, we have \[\varepsilon C_\theta\int_\Omega(\zeta_\psi+\zeta_\varphi)\geq\mathcal{E}_a(u_\varepsilon-u,u_\varepsilon-u)\geq a_*\norm{D^s(u_\varepsilon-u)}_{L^2({\mathbb{R}^d})}^2=a_*\norm{u_\varepsilon-u}_{H^s_0(\Omega)}^2.\] Therefore, $u_\varepsilon\to u$ in $H^s_0(\Omega)$.

Choosing $\zeta_\psi=(\mathcal{L}_a\psi-f)^+$ and $\zeta_\varphi=(\mathcal{L}_a\varphi-f)^-$, and making use of the property of $\theta$ that $0\leq1-\theta_\varepsilon\leq1$, then for any $\varepsilon>0$ and any $v\in H^s_0(\Omega)$, $v\geq0$, we have \[\int_\Omega(\mathcal{L}_au_\varepsilon)v=\int_\Omega[ f+(\mathcal{L}_a\psi-f)^+(1-\theta_\varepsilon)-(\mathcal{L}_a\varphi-f)^-(1-\theta_\varepsilon)]v\leq\int_\Omega[f+(\mathcal{L}_a\psi-f)^+]v\] and \[\int_\Omega(\mathcal{L}_au_\varepsilon)v=\int_\Omega[f+(\mathcal{L}_a\psi-f)^+(1-\theta_\varepsilon)-(\mathcal{L}_a\varphi-f)^-(1-\theta_\varepsilon)]v\geq\int_\Omega[ f-(\mathcal{L}_a\varphi-f)^-]v.\] Therefore, \[\int_\Omega[f\wedge(\mathcal{L}_a\varphi)]v=\int_\Omega[f-(\mathcal{L}_a\varphi-f)^-]v\leq\int_\Omega(\mathcal{L}_au_\varepsilon)v\leq\int_\Omega[f+(\mathcal{L}_a\psi-f)^+]v=\int_\Omega[f\vee(\mathcal{L}_a\psi)]v.\] Letting $\varepsilon\to0$, this holds for $u$. Since $v$ is arbitrary, we conclude \eqref{LewyStampacchia2eq}.
\end{proof}

\subsubsection{N-membranes problem}

We consider now the N-membranes problem, which consists of: To find $u=(u_1,u_2,\dots,u_N)\in\mathbb{K}_N^s$ satisfying \begin{equation}\label{NObstacles}\sum_{i=1}^N\mathcal{E}_a(u_i,v_i-u_i)\geq\sum_{i=1}^N\int_\Omega f^i(v_i-u_i),\quad\forall (v_1,\dots,v_N)\in \mathbb{K}^s_N,\end{equation} where $\mathbb{K}^s_N$ is the convex subset of $[H^s_0(\Omega)]^N$ defined by \begin{equation}\label{NObstaclesConvex}\mathbb{K}^s_N=\{(v_1,\dots,v_N)\in [H^s_0(\Omega)]^N:v_1\geq\dots\geq v_N\text{ a.e. in }\Omega\}\end{equation} and $f^i,\dots,f^N\in L^{2^\#}(\Omega)$. As in the previous sections, the existence and uniqueness follows easily. Furthermore, the following Lewy-Stampacchia type inequality also holds.

\begin{theorem}\label{LewyStampacchiaN}
The solution $u=(u_1,\dots u_N)$ of the N-membranes problem satisfies a.e. in $\Omega$ \begin{align*}f^1\wedge \mathcal{L}_au_1&\leq f^1\vee\dots\vee f^N\\f^1\wedge f^2\leq \mathcal{L}_au_2&\leq f^2\vee\dots\vee f^N\\\vdots&\\f^1\wedge\dots\wedge f^{N-1}\leq \mathcal{L}_au_{N-1}&\leq f^{N-1}\vee f^N\\f^1\wedge\dots\wedge f^N\leq \mathcal{L}_au_N&\leq f^N,\end{align*} and $\mathcal{L}_au\in [L^{2^\#}(\Omega)]^N$.
\end{theorem}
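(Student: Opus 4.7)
I would mirror the strategy of the two-obstacle proof (Theorem~\ref{LewyStampacchia2}) and then iterate the resulting one- and two-obstacle Lewy-Stampacchia inequalities along the chain $u_1 \geq u_2 \geq \cdots \geq u_N$. The starting observation is that, fixing $v_j = u_j$ for all $j \neq i$ in~\eqref{NObstacles}, each component $u_i$ satisfies
$$\mathcal{E}_a(u_i, v-u_i) \geq \int_\Omega f^i(v - u_i) \quad \forall v \in \mathbb{K}^s_{u_{i+1}, u_{i-1}},$$
i.e.\ $u_i$ solves a two-obstacle problem with lower obstacle $u_{i+1}$ and upper obstacle $u_{i-1}$, reducing to the one-obstacle $\mathbb{K}^s_{u_2}$ at $i = 1$ and to the symmetric one-obstacle from above at $i = N$. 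Testing with $\phi \leq 0$ in the $i = N$ equation (resp.\ $\phi \geq 0$ for $i = 1$) yields the trivial one-sided endpoint bounds $\mathcal{L}_a u_N \leq f^N$ and $\mathcal{L}_a u_1 \geq f^1$ a.e.\ in $\Omega$, which serve as the base of the iteration.

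The bulk of the work is to establish that $\mathcal{L}_a u_i \in L^{2^\#}(\Omega)$ for every $i$, so that Theorems~\ref{LewyStampacchia} and~\ref{LewyStampacchia2} can be invoked on the subproblems above. To this end I would introduce a bounded penalization of the coupled $N$-membrane system, extending the scheme of Theorem~\ref{LewyStampacchia2}: for each adjacency constraint $u_i \geq u_{i+1}$, a relaxation term $\zeta_i[1 - \theta_\varepsilon(u_i^\varepsilon - u_{i+1}^\varepsilon)]$ is added to the $i$-th equation and subtracted from the $(i+1)$-th, with weights $\zeta_i \geq 0$ chosen in $L^{2^\#}(\Omega)$ large enough to dominate the (unknown) Lagrange multipliers, for example $\zeta_i := \sum_{j=1}^N |f^j|$, which depends only on the data. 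Existence, uniqueness, and the coordinatewise ordering $u_1^\varepsilon \geq \cdots \geq u_N^\varepsilon$ (up to an $O(\varepsilon)$ error) for the penalized system follow from the Stampacchia theorem combined with the strict T-monotonicity of $\mathcal{E}_a$, exactly as in the two-obstacle argument applied to each pair.

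Strong convergence $u^\varepsilon \to u$ in $[H^s_0(\Omega)]^N$ is then obtained as in Theorem~\ref{LewyStampacchia2}, by comparing $u^\varepsilon$ tested against $u$ with $u$ tested against $u^\varepsilon$ and invoking $[1-\theta_\varepsilon(t)]t \leq \varepsilon C_\theta$. The penalized equation together with $0 \leq 1 - \theta_\varepsilon \leq 1$ gives immediately the pointwise a priori sandwich $f^i - \zeta_i \leq \mathcal{L}_a u_i^\varepsilon \leq f^i + \zeta_{i-1}$ (with $\zeta_0 = \zeta_N = 0$), so that in the limit $\mathcal{L}_a u_i \in L^{2^\#}(\Omega)$ for every $i$. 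With this regularity in hand, Theorem~\ref{LewyStampacchia2} applied to each subproblem (and Theorem~\ref{LewyStampacchia} at the endpoints) delivers
$$f^i \wedge \mathcal{L}_a u_{i-1} \leq \mathcal{L}_a u_i \leq f^i \vee \mathcal{L}_a u_{i+1},$$
and iterating upward from $\mathcal{L}_a u_1 \geq f^1$ and downward from $\mathcal{L}_a u_N \leq f^N$ telescopes to the advertised chains $f^1 \wedge \cdots \wedge f^i \leq \mathcal{L}_a u_i \leq f^i \vee \cdots \vee f^N$.

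The main obstacle will be the design of the penalty weights $\zeta_i$: unlike in Theorem~\ref{LewyStampacchia2}, where $\zeta_\psi,\zeta_\varphi$ are dictated by the assumed $L^{2^\#}$-regularity of $\mathcal{L}_a\psi$ and $\mathcal{L}_a\varphi$, here the obstacles $u_{i\pm 1}$ are themselves unknowns. I must therefore commit to a data-driven majorant and verify a posteriori both the convergence to the genuine $N$-membrane solution and the uniform $L^{2^\#}$ control of the quantities $\zeta_i[1-\theta_\varepsilon(u_i^\varepsilon - u_{i+1}^\varepsilon)]$. Once the $L^{2^\#}$-regularity of $\mathcal{L}_a u_i$ is secured, the iterative step is purely algebraic and presents no further difficulty.
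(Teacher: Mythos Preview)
Your plan is correct and matches the paper's strategy: decompose the $N$-membranes solution into one- and two-obstacle subproblems for each $u_i$, apply Theorems~\ref{LewyStampacchia} and~\ref{LewyStampacchia2} to obtain $f^i\wedge\mathcal{L}_au_{i-1}\leq\mathcal{L}_au_i\leq f^i\vee\mathcal{L}_au_{i+1}$, and then iterate from the endpoints (the paper cites \cite{RodriguesTeymurazyan} for this last step). Your explicit coupled-penalization step to secure $\mathcal{L}_au_i\in L^{2^\#}(\Omega)$ before invoking those theorems is exactly what the remark following the paper's proof does, though there (following \cite{Rodrigues2005NMembrane}) the weights are taken as $\zeta_i=i\zeta_0-(f^1+\cdots+f^i)$ with $\zeta_0=\max_i\{(f^1+\cdots+f^i)/i\}$, which is the choice engineered to deliver the approximate ordering you flag as the main obstacle.
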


\begin{proof}
Choosing $(v,u_2,\dots,u_N)\in\mathbb{K}_N^s$ with $v\in\mathbb{K}_{u_2}^s$, we see that $u_1\in\mathbb{K}_{u_2}^s$ solves \eqref{ObsPb} with $f=f^1$ and by Theorem \ref{LewyStampacchia} \[f^1\leq\mathcal{L}_au_1\leq f^1\vee\mathcal{L}_au_2\text{ a.e. in }\Omega.\] Analogously, $u_j\in\mathbb{K}^s_{u_{j+1},u_{j-1}}$ solves the two obstacles problem with $f=f^j$, $j=2,3,\dots,N-1$, and satisfies, by \eqref{LewyStampacchia2eq}, \[f^j\wedge\mathcal{L}_au_{j-1}\leq \mathcal{L}_au_j\leq f^j\vee\mathcal{L}_au_{j+1}\quad\text{ a.e. in }\Omega.\] Finally,  $u_N$ solves the one obstacle problem with an upper obstacle $\varphi= u_{N-1}$, and so by the symmetric Lewy-Stampacchia estimates given in Theorem \ref{LewyStampacchia}, we have \[f^N\wedge\mathcal{L}_au_{N-1}\leq \mathcal{L}_au_N\leq f^N\quad\text{ a.e. in }\Omega.\] The proof concludes by simple iteration (see Theorem 5.1 of \cite{RodriguesTeymurazyan}).

\end{proof}

\begin{remark}
The solution can also approximated by the bounded penalization given in \cite{Rodrigues2005NMembrane} by \[u_\varepsilon\in H^s_0(\Omega):\quad\langle \mathcal{L}_au^\varepsilon_i,v_i\rangle+\int_\Omega\zeta_i\theta_\varepsilon(u^\varepsilon_i-u^\varepsilon_{i+1})v_i-\int_\Omega\zeta_{i-1}\theta_\varepsilon(u^\varepsilon_{i-1}-u^\varepsilon_i)v_i=\int_\Omega (f^i+\zeta_i-\zeta_{i-1})v_i\quad\forall v_i\in H^s_0(\Omega)\] for \begin{align*}\zeta_0=&\,\max\left\{\frac{f^1+\cdots+f^i}{i}:i=1,\dots,N\right\},\\\zeta_i=&\,i\zeta_0-(f^1+\cdots+f^i)\quad\text{ for }i=1,\dots,N\end{align*} with $\theta_\varepsilon(t)=1$ for $t\geq\varepsilon$ and $u^\varepsilon_0=+\infty,u^\varepsilon_{N+1}=-\infty.$

As in \cite{Rodrigues2005NMembrane}, we then have the strong convergence of $u^\varepsilon\to u$ in $[H^s_0(\Omega)]^N$, thereby giving the Lewy-Stampacchia inequalities in Theorem \ref{LewyStampacchiaN}.
\end{remark}

\subsection{Local regularity of solutions}
We make use of the Lewy-Stampacchia inequalities to show local regularity for the three types of nonlocal obstacle problems, but first it is useful to obtain the global boundedness of the solutions.

Let $s\in(0,1)$. Suppose that 
\begin{enumerate}[label=(\alph*)]
    \item $f,\mathcal{L}_a\psi\in L^p(\Omega)$ for some $p>\frac{d}{2s}$ for the one obstacle problem,
    \item $f\wedge\mathcal{L}_a\varphi$ and $f\vee\mathcal{L}_a\psi$ are in $L^p(\Omega)$ for some $p>\frac{d}{2s}$ for the two obstacles problem, or
    \item $f^i\in L^p(\Omega)$ for $i=1,\dots,N$ for some $p>\frac{d}{2s}$ for the N-membranes problem.
\end{enumerate}

\begin{theorem} Let $u$ denote the solutions of the one obstacle problem \eqref{ObsPb}, or the two obstacles problem \eqref{2Obstacles}, or $u=u_i$ for $i=1,\dots,N$ of the N-membranes problem \eqref{NObstacles}, respectively, under the assumptions (a), (b) or (c) above. Then $g=\mathcal{L}_au\in L^p(\Omega)$, with $p>\frac{d}{2s}$ and there exists a constant $C$, depending only on $a_*$, $a^*$, $d$, $\Omega$, $\norm{u}_{H^s_0(\Omega)}$, $\norm{g}_{L^p(\Omega)}$ and $s$, such that \[\norm{u}_{L^\infty(\Omega)}\leq C.\]
\end{theorem}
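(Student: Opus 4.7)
The plan is to reduce each of the three obstacle problems to an elliptic equation and then apply the Stampacchia truncation method. The Lewy-Stampacchia inequalities of Theorems \ref{LewyStampacchia}, \ref{LewyStampacchia2}, and \ref{LewyStampacchiaN} give, under assumptions (a), (b), and (c) respectively, that $\mathcal{L}_a u = g$ in $H^{-s}(\Omega)$ for some $g \in L^p(\Omega)$ whose $L^p$ norm is controlled by the data: in case (a), $|g| \le |f| + |\mathcal{L}_a\psi|$; in case (b), $|g| \le |f \wedge \mathcal{L}_a \varphi| + |f \vee \mathcal{L}_a \psi|$; in case (c), $|g|$ is bounded by a sum of the $|f^j|$'s. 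Since $g \in L^p(\Omega)$, the identity $\mathcal{E}_a(u,\phi) = \int_\Omega g \phi$ extends by density to every $\phi \in H^s_0(\Omega)$, not merely to admissible increments $v - u$ of the variational inequality, so one is free to choose test functions by truncation.

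I would then test with $\phi = (u-k)^+ \in H^s_0(\Omega)$ for $k \geq 0$ (which lies in $H^s_0(\Omega)$ by the normal contraction property established in the corollary following Theorem \ref{DirichletFormThm}). Decomposing $u = u \wedge k + (u-k)^+$,
\[
\mathcal{E}_a(u, (u-k)^+) = \mathcal{E}_a((u-k)^+, (u-k)^+) + \mathcal{E}_a(u \wedge k, (u-k)^+).
\]
The first term is bounded below by $a_* \|D^s (u-k)^+\|_{L^2(\mathbb{R}^d)}^2$ by coercivity. The crucial point, given the non-symmetry of $a$, is that the cross term is non-negative: wherever $(u-k)^+(x) > 0$ one has $\widetilde{u \wedge k}(x) = k$ while $\widetilde{u \wedge k}(y) \le k$ for every $y \in \mathbb{R}^d$, so the integrand in \eqref{EAdef2.1} is pointwise $\ge 0$. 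Setting $A_k := \{u > k\}$, combining this with Hölder's inequality on the right-hand side, the Sobolev embedding of Lemma \ref{Sobolev}, and the elementary inequality $(h-k)|A_h|^{1/2^*} \le \|(u-k)^+\|_{L^{2^*}}$ for $h > k$ yields the recursive bound
\[
|A_h| \le \left(\frac{C \|g\|_{L^p}}{a_*(h-k)}\right)^{\!2^*} |A_k|^{2^*/p' - 1},
\]
whose exponent $2^*/p' - 1$ exceeds $1$ precisely because $p > d/(2s)$.

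Stampacchia's iteration lemma then produces a threshold $k_0$, depending quantitatively on $a_*, a^*, d, s, |\Omega|, \|u\|_{H^s_0}$, and $\|g\|_{L^p}$, such that $|A_{k_0}| = 0$; the same argument applied to $-u$ (using that $a \ge 0$ gives the analogous non-negativity of the cross term $\mathcal{E}_a((-u) \wedge k, (-u-k)^+)$) produces the symmetric lower bound and hence the desired $L^\infty$ estimate. The main obstacle is precisely the non-negativity of the cross term in the absence of a symmetric decomposition: the standard De Giorgi computation would rewrite it as a sum of non-negative squared increments, but here it must be handled by the disjoint-support argument above, which is of the same flavour as the proof of strict T-monotonicity in Theorem \ref{BTmonotone}. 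Finally, the borderline cases $s \ge d/2$ are dealt with by replacing $2^*$ by any sufficiently large finite exponent available in the Sobolev-Poincaré embeddings recalled in the introduction, which still gives an iteration exponent strictly larger than $1$ in Stampacchia's lemma.
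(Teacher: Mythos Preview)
Your argument is correct. The reduction to an equation via the Lewy--Stampacchia inequalities is exactly what the paper does (implicitly), and your disjoint-support computation showing $\mathcal{E}_a(u\wedge k,(u-k)^+)\ge0$ is valid for the non-symmetric kernel because, as you observe, wherever $(u-k)^+(x)>0$ one has $\widetilde{u\wedge k}(x)=k\ge\widetilde{u\wedge k}(y)$ for every $y$, so the integrand is pointwise non-negative; this is indeed the same mechanism as in the T-monotonicity proof.

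The paper, however, takes a different iteration route. Rather than truncating and invoking Stampacchia's level-set lemma, it first records the Kato-type inequality
\[
\mathcal{E}_a(\Phi(u),v)\le\mathcal{E}_a(u,v\,\Phi'(u)),\qquad v\ge0,
\]
for Lipschitz convex $\Phi$ with $\Phi(0)=0$ (which follows from the pointwise convexity inequality $\Phi(u(x))-\Phi(u(y))\le\Phi'(u(x))(u(x)-u(y))$ and $a\ge0$, again without symmetry), and then runs the \emph{Moser} iteration of Leonori--Peral--Primo--Soria. Your Stampacchia/De Giorgi argument is more self-contained (no external reference needed) and arguably cleaner for a global $L^\infty$ bound on a bounded domain; the Moser route, on the other hand, is the one that naturally localises and feeds into the subsequent weak Harnack and H\"older estimates in Theorems~\ref{WeakHarnack} and~\ref{Holder}. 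Both yield the stated result.
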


\begin{proof}
Assume that $\Phi:\mathbb{R}\to\mathbb{R}$ is a Lipschitz convex function such that
$\Phi(0) = 0$, then if $u\in H^s_0(\Omega)$, we have, by repeating the proof of Proposition 4 of \cite{BoundedSolnEstimates}, \[\mathcal{E}_a(\Phi(u),v)\leq\mathcal{E}_a(u,v\Phi'(u))\quad\text{ for }v\geq0,v\in H^s_0(\Omega),\text{ weakly in }\Omega.\] 
We can then repeat the proof of Theorem 13 of \cite{BoundedSolnEstimates} using the Moser technique to obtain the theorem.
\end{proof}

Observe that in general our kernel does not satisfy the usual regularity of the kernel of the fractional Laplacian \cite{RosOton2016Survey}\cite{RosOton2014Regularity} or other commonly considered fractional kernels \cite{CaffarelliDeSilvaSavin}\cite{Fall2020CVPDE}, since in general it does not satisfy the ``symmetry'' condition $a(x,y)=a(x,-y)$ unless $a$ is a constant multiple of the kernel of the fractional Laplacian. However, it will still be possible to obtain local  H\"older regularity on the solution with the properties of our kernel, if we assume it is symmetric, i.e. if it satisfies \begin{equation}\label{SymmetryAssump}a(x,y)=a(y,x).\end{equation} Then, we can make use of the symmetric form as given in \eqref{UnweightedGreenEq}, and apply the results of \cite{BoundedSolnEstimates}.

By the Lewy-Stampacchia inequalities, as long as the upper and lower bounds are in $L^p(\Omega)$ for some $p>\frac{d}{2s}$, we can make use of the Dirichlet form nature of the bilinear form, and obtain H\"older regularity on the solutions on balls independently of the boundary conditions and of the regularity of $\partial\Omega$. Then, by Theorem 1.6 of \cite{KassmannDyda}, since the bilinear form satisfies \eqref{kAcond} and \eqref{Markovian1}--\eqref{Markovian2}, in the symmetric case, we have the weak Harnack inequality. Furthermore, as in the classical de Giorgi-Nash-Moser theory, the weak Harnack inequality implies a decay of oscillation-result and local H\"older regularity estimates for weak solutions.

\begin{theorem}[Weak Harnack inequality]\label{WeakHarnack} Let $u$ denote the solutions of the one obstacle problem \eqref{ObsPb}, or the two obstacles problem \eqref{2Obstacles}, or $u=u_i$ for $i=1,\dots,N$ of the N-membranes problem \eqref{NObstacles}, respectively, under the assumptions (a), (b) or (c) above. Suppose the unit ball about the origin $B_1$ is a subset of $\Omega$, and $a$ is symmetric. Then, \[\inf_{B_{1/4}}u\geq c\left(\fint_{B_{1/2}}u(x)^{p_0}\,dx\right)^\frac{1}{p_0}-\sup_{x\in B_{15/16}}\int_{\mathbb{R}^d\backslash B_1}u^-(z)d\mu_x(dz)-\norm{\mathcal{L}_au}_{L^p(B_{15/16})},\] for $d\mu_x(dz)$ a measure depending on $a$ as defined in \cite{KassmannDyda} and \cite{KassmannHarnack}, and the positive constants $p_0\in(0,1)$ and $c$ depend only on $d,s,a_*,a^*$.
\end{theorem}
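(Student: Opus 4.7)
The plan is to exploit the Lewy--Stampacchia inequalities (Theorems \ref{LewyStampacchia}, \ref{LewyStampacchia2} and \ref{LewyStampacchiaN}) just established, which reduce each of the three obstacle-type problems to a linear equation $\mathcal{L}_a u = g$ with right-hand side $g\in L^p(\Omega)$, $p>d/(2s)$, and then to invoke Theorem 1.6 of \cite{KassmannDyda} as a black box. The hypotheses of the Kassmann--Dyda result are essentially our standing assumptions \eqref{kAcond} together with the symmetry \eqref{SymmetryAssump} and the Dirichlet-form structure from Theorem \ref{DirichletFormThm}, so the bulk of the work is the identification of the effective right-hand side in each case.

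First I would fix the function $g$. Under (a), Theorem \ref{LewyStampacchia} gives
\[
f\leq \mathcal{L}_a u\leq f\vee \mathcal{L}_a\psi\quad\text{a.e.\ in }\Omega,
\]
so $\mathcal{L}_a u =:g\in L^p(\Omega)$ with $\|g\|_{L^p(B_{15/16})}\leq\|f\|_{L^p}+\|\mathcal{L}_a\psi\|_{L^p}$. Under (b), Theorem \ref{LewyStampacchia2} yields $f\wedge\mathcal{L}_a\varphi\leq\mathcal{L}_a u\leq f\vee\mathcal{L}_a\psi$, producing an analogous $L^p$ bound. Under (c), applying Theorem \ref{LewyStampacchiaN} to each component $u_i$ expresses $\mathcal{L}_a u_i$ as bounded between finite lattice operations on $\{f^1,\dots,f^N\}$, all of which lie in $L^p(\Omega)$. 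In every case we arrive at a weak identity $\mathcal{E}_a(u,v)=\int_\Omega g\, v$ for all $v\in H^s_0(\Omega)$ with $\operatorname{supp}v\Subset\Omega$, with $g\in L^p(\Omega)$ for some $p>d/(2s)$.

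Next I would verify the structural hypotheses required by \cite{KassmannDyda}. By Theorem \ref{DirichletFormThm}, $(\mathcal{E}_a,H^s_0(\Omega))$ is a regular Dirichlet form satisfying \eqref{Markovian1}--\eqref{Markovian2}; by \eqref{kAcond} the kernel $a$ is pointwise comparable to $c_{d,s}^2|x-y|^{-d-2s}$ with constants $a_*,a^*$; and by hypothesis $a$ is symmetric, so by Proposition \ref{UnweightedGreen} the form $\mathcal{E}_a$ admits the manifestly symmetric quadratic representation used in \cite{KassmannDyda}. These are precisely the assumptions of Theorem 1.6 there, whose application to the supersolution $u$ on $B_1\subset\Omega$ with datum $g=\mathcal{L}_a u$ gives, for the measure $d\mu_x(dz)$ built from $a$ in \cite{KassmannHarnack} and some $p_0\in(0,1)$, $c>0$ depending only on $d,s,a_*,a^*$, the stated weak Harnack inequality with $\|\mathcal{L}_a u\|_{L^p(B_{15/16})}$ absorbed into the tail/source term.

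The main obstacle I expect is not the Harnack step itself, but the bookkeeping verification that the Lewy--Stampacchia inequality -- which a priori lives in $L^{2^\#}$-duality -- is strong enough to turn the variational inequality into a genuine pointwise equation $\mathcal{L}_a u = g$ in the strong distributional sense demanded by \cite{KassmannDyda}. For the one-obstacle and two-obstacle cases this is immediate from the way the $L^{2^\#}$ bound was derived via bounded penalization (Theorem \ref{PenalizedProbSoln}), since the penalized equation is a genuine equation and the penalization term converges in $L^{2^\#}$; for the N-membranes problem one uses the iteration scheme indicated at the end of the proof of Theorem \ref{LewyStampacchiaN}, reducing each $u_i$ to a two-obstacle problem with obstacles $u_{i-1},u_{i+1}$ and data $f^i$, for which the previous argument applies. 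Once this identification is in place, the conclusion follows from \cite{KassmannDyda} verbatim.
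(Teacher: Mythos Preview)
Your proposal is correct and follows essentially the same approach as the paper: the paper does not give a separate proof of Theorem \ref{WeakHarnack} but simply observes, in the paragraph preceding it, that by the Lewy--Stampacchia inequalities $\mathcal{L}_a u\in L^p(\Omega)$ with $p>d/(2s)$, and that since the symmetric bilinear form satisfies \eqref{kAcond} and \eqref{Markovian1}--\eqref{Markovian2}, Theorem 1.6 of \cite{KassmannDyda} applies directly. Your additional care in verifying that the variational inequality becomes a genuine equation $\mathcal{L}_a u=g$ via the bounded penalization is a welcome clarification, but the underlying strategy is identical.
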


\begin{theorem}[H\"older regularity]\label{Holder} Let $u$ denote the solutions of the one obstacle problem \eqref{ObsPb}, or the two obstacles problem \eqref{2Obstacles}, or $u=u_i$ for $i=1,\dots,N$ of the N-membranes problem \eqref{NObstacles}, respectively, under the assumptions (a), (b) or (c) above. Suppose $B_\rho\subset\subset\Omega$ is a ball of radius $\rho$ and $a$ is symmetric. Then, there exists $c_\rho\geq0$ and $\beta\in(0,1)$, independent of $u$, such that the following H\"older estimate holds for almost every $x,y\in B_{\rho/2}$: \[|u(x)-u(y)|\leq c_1|x-y|^\beta\left(\norm{u}_{L^\infty(\Omega)}+\norm{\mathcal{L}_au}_{L^p(\Omega)}\right).\]
\end{theorem}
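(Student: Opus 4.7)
The plan is to derive the H\"older estimate from the weak Harnack inequality (Theorem~\ref{WeakHarnack}) via the classical De~Giorgi--Nash--Moser oscillation-decay scheme, using the Lewy--Stampacchia inequalities to control the right-hand side $\mathcal{L}_a u$.

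First, depending on which obstacle problem $u$ solves, I invoke Theorem~\ref{LewyStampacchia}, Theorem~\ref{LewyStampacchia2}, or Theorem~\ref{LewyStampacchiaN}. Combined with the integrability hypotheses (a), (b), or (c), these imply $g:=\mathcal{L}_a u \in L^p(\Omega)$ with $p>d/(2s)$ and $\|g\|_{L^p(\Omega)}$ controlled by the data. The previous $L^\infty$ theorem furthermore gives $\|u\|_{L^\infty(\Omega)}\leq C$. Hence the quantities appearing on the right-hand side of the desired H\"older estimate are finite.

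Next, I fix $x_0\in B_{\rho/2}$ and $0<r\leq \rho/4$, so that $B_{r}(x_0)\subset B_\rho\subset\subset\Omega$. I set
\[
v_-:=u-\inf_{B_r(x_0)}u\geq 0,\qquad v_+:=\sup_{B_r(x_0)}u-u\geq 0,
\]
both of which satisfy $\mathcal{L}_a v_\pm = \pm g$ pointwise inside $B_r(x_0)$ (since constants lie in the kernel of $\mathcal{L}_a$ when computed against test functions supported in $B_r(x_0)$). After rescaling $B_r(x_0)$ to the unit ball, I apply Theorem~\ref{WeakHarnack} to $v_+$ and to $v_-$ separately. The source scales as $r^{2s-d/p}\|g\|_{L^p(\Omega)}$, while the tail term $\sup_{x\in B_{15/16}}\int_{\mathbb{R}^d\setminus B_1}v_\pm^-(z)\,d\mu_x(dz)$ is controlled by $\|u\|_{L^\infty(\Omega)}$ uniformly in $r$, using the decay of $d\mu_x$ forced by the kernel bound~\eqref{kAcond}. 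Adding the two resulting inequalities produces the standard oscillation reduction
\[
\mathop{\mathrm{osc}}_{B_{r/4}(x_0)} u \;\leq\; \gamma\,\mathop{\mathrm{osc}}_{B_r(x_0)} u \;+\; C\,r^{2s-d/p}\bigl(\|u\|_{L^\infty(\Omega)}+\|g\|_{L^p(\Omega)}\bigr),
\]
with $\gamma\in(0,1)$ and $C>0$ depending only on $d$, $s$, $a_*$, $a^*$, $p$.

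Iterating this decay inequality along the geometric sequence $r_k=4^{-k}(\rho/4)$ via the classical Campanato--Morrey lemma yields
\[
\mathop{\mathrm{osc}}_{B_r(x_0)} u \;\leq\; c_\rho\, r^\beta\bigl(\|u\|_{L^\infty(\Omega)}+\|\mathcal{L}_au\|_{L^p(\Omega)}\bigr), \qquad 0<r\leq \rho/4,
\]
for some $\beta\in(0,\min(2s-d/p,1))$ determined by $\gamma$. Taking $x_0=x$ and $r=|x-y|$ for arbitrary $x,y\in B_{\rho/2}$ finishes the proof.

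The main obstacle, and the reason the symmetry assumption on $a$ is used, is the careful rescaled application of Theorem~\ref{WeakHarnack}: one must verify that after the change of variables $B_r(x_0)\to B_1$, both the non-local tail terms involving $v_\pm^-$ outside the rescaled ball and the $L^p$-source contribution scale in a way compatible with an $r^\beta$-rate. The tail is handled by the global $L^\infty$ bound on $u$ (extended by $0$ outside $\Omega$) combined with the uniform decay estimate on $d\mu_x$ built from~\eqref{kAcond}; the source contribution picks up precisely the scaling exponent $2s-d/p>0$ coming from the assumption $p>d/(2s)$, which is exactly the margin needed to close the iteration and obtain a positive H\"older exponent $\beta$.
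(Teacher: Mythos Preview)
Your proposal is correct and follows essentially the same route as the paper. The paper's own proof is a one-sentence sketch: it invokes the Lewy--Stampacchia inequalities to place $g=\mathcal{L}_au$ in $L^p(\Omega)$ with $p>d/(2s)$, and then simply appeals to Theorem~\ref{WeakHarnack} ``following the classical approach''; you have spelled out that classical oscillation-decay iteration in detail, including the rescaling, the tail control via the global $L^\infty$ bound, and the source scaling $r^{2s-d/p}$.
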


\begin{proof}
Since the Lewy-Stampacchia inequalities in Theorems \ref{LewyStampacchia}, \ref{LewyStampacchia2} and \ref{LewyStampacchiaN} hold a.e. in $B_\rho\subset\Omega$ for $\mathcal{L}_au$ for the one obstacle, the two obstacles problem and the N-membranes problem respectively, and $\mathcal{L}_au=g$ in $B_\rho$, therefore $g$ lies in $L^p(\Omega)$ for $p>\frac{d}{2s}$, and we have the result making use of Theorem \ref{WeakHarnack} following the classical approach.
\end{proof}

\begin{remark}
\begin{enumerate}[label=(\roman*)]
    \item The local H\"older continuity of the solutions of a two membranes problem was obtained for different operators with translation invariant kernels in \cite{CaffarelliDeSilvaSavin}, as well as the local $C^{1,\gamma}$ regularity in the case of the fractional Laplacian as in the case of the regular obstacles of \cite{SilvestrePaper}.
    \item The results in Theorems \ref{WeakHarnack} and \ref{Holder} can be generalized to arbitrary family of measures satisfying \eqref{kAcond} and \eqref{Markovian1}--\eqref{Markovian2}, as given in \cite{KassmannChaker}.
    \item Since the $L^\infty$ bound also works in the non-symmetric case, we conjecture that the weak Harnack inequality and the H\"older continuity are also true without the symmetry assumption, but this is an open problem.
\end{enumerate}
\end{remark}

In the case where $a$ corresponds to the kernel of the fractional Laplacian, $\mathcal{L}_a=(-\Delta)^s$, we can use Corollary 1.15 of \cite{SS1} and apply local elliptic regularity of weak solutions in fractional Sobolev spaces $W^{r,p}$ associated to Dirichlet fractional Laplacian problems as in Theorem 1.4 of \cite{BiccariWarmaZuazua}.

\begin{theorem} Let $u$ denote the solutions of the one obstacle problem \eqref{ObsPb} for $f\in L^{2^\#}(\Omega)$ in the form \[u\in\mathbb{K}^s_\psi(\Omega):\int_{\mathbb{R}^d}D^su\cdot D^s(v-u)\,dx\geq\int_\Omega f(v-u)\quad\forall v\in\mathbb{K}^s_\psi,\] or the corresponding two obstacles problem \eqref{2Obstacles}, or $u=u_i$ for $i=1,\dots,N$ of the corresponding N-membranes problem \eqref{NObstacles}, respectively, under the assumptions (a), (b) or (c) above, with $2^\# \leq p < \infty$ and $0<s<1$. Then, $(-\Delta)^su\in L^p_{loc}(\Omega)$ and $u\in W^{2s,p}_{loc}(\Omega)$. In particular, $u\in C^1(\Omega)$ if $s>1/2$ and $p>d/(2s-1)$, by Theorem 7.57(c) of \cite{Adams}.
\end{theorem}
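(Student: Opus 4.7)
The plan is to chain together three ingredients: (i) the Lewy-Stampacchia inequalities established earlier in this section, which promote each variational inequality to a pointwise $L^p$ bound on $(-\Delta)^s u$; (ii) the local linear regularity theory for the fractional Dirichlet Laplacian of Biccari-Warma-Zuazua; and (iii) the fractional Sobolev embedding from Adams.

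First I would fix one of the three cases and invoke the corresponding Lewy-Stampacchia estimate (Theorem \ref{LewyStampacchia} for the one obstacle problem, Theorem \ref{LewyStampacchia2} for the two obstacles problem, or Theorem \ref{LewyStampacchiaN} for the N-membranes problem). In each case the solution satisfies pointwise a.e.\ in $\Omega$
\begin{equation*}
g_- \leq (-\Delta)^s u \leq g_+,
\end{equation*}
where $g_{\pm}$ are explicit measurable functions that belong to $L^p(\Omega)$ under hypotheses (a), (b), or (c) respectively. Setting $g := (-\Delta)^s u$, this gives $g \in L^p(\Omega) \subset L^p_{loc}(\Omega)$, which yields the first conclusion.

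Next I would use the identity $\langle (-\Delta)^s u, v \rangle = \int_{\mathbb{R}^d} D^s u \cdot D^s v\, dx$ arising from \eqref{EquivNorms} (extended by density to $H^s_0(\Omega)$) to view $u$ as a weak solution of the linear Dirichlet problem $(-\Delta)^s u = g$ in $\Omega$ with $u \equiv 0$ in $\mathbb{R}^d \setminus \Omega$, in the sense required by \cite{BiccariWarmaZuazua}. Applying their Theorem 1.4 with right-hand side $g \in L^p(\Omega)$ then gives $u \in W^{2s,p}_{loc}(\Omega)$ for every $p \in [2^\#, \infty)$. Finally, under the extra assumption $s > 1/2$ and $p > d/(2s-1)$ we have $2s - d/p > 1$, and Theorem 7.57(c) of \cite{Adams} supplies the local embedding $W^{2s,p}_{loc}(\Omega) \hookrightarrow C^1(\Omega)$.

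The one step requiring some care is the passage from the operator inequality $f \leq \mathcal{L}_a u \leq f \vee \mathcal{L}_a \psi$, which a priori lives in the dual space $H^{-s}(\Omega)$, to an a.e.\ pointwise inequality for a bona fide $L^p$ representative of $\mathcal{L}_a u$. This is exactly the content of the second conclusion in Theorems \ref{LewyStampacchia}--\ref{LewyStampacchiaN}, namely that $\mathcal{L}_a u \in L^{2^\#}(\Omega)$, which upgraded by the bounding functions becomes $\mathcal{L}_a u \in L^p(\Omega)$. Once this functional reinterpretation is secured, the remainder is a direct citation chain and should not require additional work specific to the obstacle structure.
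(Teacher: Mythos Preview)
Your proposal is correct and matches the paper's own argument essentially line for line: the paper likewise invokes the Lewy--Stampacchia inequalities (Theorems \ref{LewyStampacchia}, \ref{LewyStampacchia2}, \ref{LewyStampacchiaN}) to place $(-\Delta)^s u$ in $L^p(\Omega)$, then cites Theorem 1.4 of \cite{BiccariWarmaZuazua} for the $W^{2s,p}_{loc}$ regularity and Theorem 7.57(c) of \cite{Adams} for the $C^1$ embedding. The only minor addition in the paper is an explicit reference to Corollary 1.15 of \cite{SS1} for the identification of the bilinear form with the fractional Laplacian, which you handle via \eqref{EquivNorms}.
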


This theorem, which seems new, is an extension to nonlocal obstacle type problems of the well-known $W^{2,p}_{loc}(\Omega)$ regularity of solutions of the classical local obstacle problem corresponding to $s=1$.

\section{$s$-capacity and Lewy-Stampacchia Inequalities in $H^{-s}(\Omega)$}
In this section, we extend the results on the Lewy-Stampacchia inequalities obtained in the previous section to data in the dual space $H^{-s}(\Omega)$. We first characterize the order dual of $H^s_0(\Omega)$, which is related to the theory of the $s$-capacity. This follows much of the results in the classical obstacle problem \cite{Stampacchia1965}, \cite{AdamsCapacityObstacle} and \cite{ObstacleProblems}. In \cite{AdamsHedberg} and \cite{FukushimaDirichletForms} the more general capacities are considered for general bilinear forms. Recently the fractional capacity for the Neumann problem was considered in \cite{WarmaFracCap}.  In order to extend the results in Theorems \ref{LewyStampacchia}, \ref{LewyStampacchia2} and \ref{LewyStampacchiaN} to data in $H^{-s}(\Omega)$, we may apply the general results of \cite{Mosco1976} for the one obstacle problem and \cite{RodriguesTeymurazyan} for two obstacles.

\subsection{A characterization of the order dual $H^{-s}_\prec(\Omega)$ of $H^s_0(\Omega)$}
Associated with any Dirichlet form, there is a \emph{Choquet capacity}. We denote by $C_s$ the capacity associated to the norm of $H^s_0(\Omega)$. For any compact set $K\subset\Omega$, it is defined by 
\[C_s(K)=\inf\left\{\norm{u}_{H^s_0(\Omega)}^2:u\in H^s_0(\Omega),u\geq1 \text{ a.e. in }K\right\}.\] For an arbitrary open set $G\subset\Omega$, \[C_s(G)=\sup\left\{C_s(K):K\text{ is a compact set in }G\right\}.\]

A function $u\in H^s_0(\Omega)$ is said to be \emph{quasi-continuous} if for every $\varepsilon>0$, there exists an open set $G\subset\Omega$ such that $C_s(G)<\varepsilon$ and $u|_{\Omega\backslash G}$ is continuous. A property is said to hold \emph{quasi-everywhere} (q.e.) if it holds except for a set of capacity zero. 

It is well-known (by \cite{AdamsHedberg} Proposition 6.1.2 page 156 or \cite{FukushimaDirichletForms} Theorem 2.1.3 page 71) that for every $u\in H^s_0(\Omega)$, there exists a unique (up to a set of capacity 0) quasi-continuous function $\bar{u}:\Omega\to\mathbb{R}$ such that $\bar{u}=u$ a.e. on $\Omega$. 
%This property holds also when we restrict to an arbitrary open set $G\subset\Omega$ (see \cite{FukushimaDirichletForms} page 70). 
Therefore, we have the following theorem (see also Theorem 3.7 of \cite{WarmaFracCap}).
%or Lemma 2.1.4 of \cite{FukushimaDirichletForms})

\begin{theorem}\label{QCApprox} For every function $u\in H^s_0(\Omega)$, there exists a unique (up to q.e. equivalence) $\bar{u}:\Omega\to\mathbb{R}$ quasi-continuous function such that $u=\bar{u}$ a.e. in $\Omega$.
\end{theorem}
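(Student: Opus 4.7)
The plan is to realise $\bar u$ as the pointwise (q.e.) limit of a carefully chosen subsequence of smooth approximations, exploiting a capacitary weak-type inequality to convert $H^s_0$-convergence into quasi-uniform convergence. The main tool is the estimate
\begin{equation*}
C_s\bigl(\{x\in\Omega:|v(x)|>\lambda\}\bigr)\leq \frac{1}{\lambda^{2}}\,\|v\|_{H^s_0(\Omega)}^{2},\qquad v\in H^s_0(\Omega),\ \lambda>0,
\end{equation*}
which follows directly from the definition of $C_s$ since $\lambda^{-1}|v|$ is an admissible competitor on $\{|v|>\lambda\}$ (using that $v\mapsto|v|$ is a normal contraction, hence acts on $\mathcal E_a$, and in particular preserves the $H^s_0$-norm up to a constant).

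First, by density of $C_c^\infty(\Omega)$ in $H^s_0(\Omega)$, pick $u_n\in C_c^\infty(\Omega)$ with $u_n\to u$ in $H^s_0(\Omega)$, and extract a subsequence (not relabelled) with $\|u_{n+1}-u_n\|_{H^s_0(\Omega)}\leq 2^{-2n}$. Applying the capacitary inequality to $v=u_{n+1}-u_n$ with $\lambda=2^{-n}$ gives
\begin{equation*}
C_s(A_n)\leq 2^{-2n},\qquad A_n:=\{x\in\Omega:|u_{n+1}(x)-u_n(x)|>2^{-n}\}.
\end{equation*}
Set $G_N:=\bigcup_{n\geq N}A_n$; by countable subadditivity of the Choquet capacity $C_s(G_N)\leq 2^{-2N+1}\to 0$. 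On $\Omega\setminus G_N$ the estimate $|u_{n+1}-u_n|\leq 2^{-n}$ holds for every $n\geq N$, so the sequence $(u_n)$ converges uniformly on $\Omega\setminus G_N$. Define
\begin{equation*}
\bar u(x):=\lim_{n\to\infty}u_n(x)\quad\text{for }x\in\Omega\setminus N_\infty,\qquad N_\infty:=\bigcap_N G_N,
\end{equation*}
and extend $\bar u$ arbitrarily on the $C_s$-null set $N_\infty$. Since each $u_n$ is continuous and the convergence is uniform on $\Omega\setminus G_N$, the restriction $\bar u|_{\Omega\setminus G_N}$ is continuous; as $C_s(G_N)\to0$, $\bar u$ is quasi-continuous. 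Passing to a further a.e.-convergent subsequence of $u_n\to u$ in $L^2(\Omega)$ (guaranteed by the Poincaré embedding $H^s_0(\Omega)\hookrightarrow L^2(\Omega)$ of Lemma \ref{Poincare}) identifies the pointwise and $L^2$ limits, so $\bar u=u$ a.e.\ in $\Omega$.

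For uniqueness, suppose $\bar u_1,\bar u_2$ are both quasi-continuous with $\bar u_1=\bar u_2$ a.e. Fix $\varepsilon>0$ and pick an open $G\subset\Omega$ with $C_s(G)<\varepsilon$ such that both $\bar u_i|_{\Omega\setminus G}$ are continuous. The set $E:=\{\bar u_1\neq\bar u_2\}\cap(\Omega\setminus G)$ is then relatively open in $\Omega\setminus G$ and of Lebesgue measure zero; since $G$ is open, $E$ is open in $\Omega$. A nonempty open set of zero measure is impossible in $\mathbb R^d$, so $E=\emptyset$, i.e.\ $\{\bar u_1\neq\bar u_2\}\subset G$, whence $C_s(\{\bar u_1\neq\bar u_2\})<\varepsilon$. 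Letting $\varepsilon\to 0$ yields $\bar u_1=\bar u_2$ q.e.

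The main technical step is the capacitary weak-type inequality together with the fact that the truncation/normal contraction acts on $\mathcal E_a$ (already established in the Corollary following Theorem \ref{DirichletFormThm}); once it is in hand, the Borel–Cantelli-style argument on $G_N$ converts $H^s_0$-Cauchyness into quasi-uniform convergence, and everything else is a standard topological clean-up.
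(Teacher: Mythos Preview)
The paper does not actually prove this theorem; it records the result as well-known and cites Adams--Hedberg (Proposition~6.1.2) and Fukushima--Oshima--Takeda (Theorem~2.1.3). Your existence argument is precisely the standard one found in those references, and it is correct. Two minor remarks: you only invoke the capacitary weak-type inequality for $v=u_{n+1}-u_n\in C_c^\infty(\Omega)$, so the level sets $A_n$ are genuinely open and no issue arises; and $\||v|\|_{H^s_0}\le\|v\|_{H^s_0}$ holds with constant~$1$ directly from the reverse triangle inequality applied to the Gagliardo seminorm \eqref{EquivNorms2}, so there is no need to appeal to the abstract normal-contraction property.

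There is, however, a genuine gap in your uniqueness argument. You assert that $E:=\{\bar u_1\neq\bar u_2\}\cap(\Omega\setminus G)$, being relatively open in $\Omega\setminus G$, is open in $\Omega$ ``since $G$ is open''. This inference is false: $\Omega\setminus G$ is \emph{closed}, and a relatively open subset of a closed set need not be open (for instance $\{0\}$ is relatively open in $(-\infty,0]\cup[1,\infty)=\mathbb{R}\setminus(0,1)$ but is certainly not open in $\mathbb{R}$). The correct fix is to observe instead that $G\cup E$ is open in $\Omega$: for $x\in E$, continuity of $\bar u_1-\bar u_2$ on $\Omega\setminus G$ yields a ball $B_r(x)$ with $B_r(x)\cap(\Omega\setminus G)\subset E$, hence $B_r(x)\subset G\cup E$. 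Since $|E|=0$, the open sets $G$ and $G\cup E$ differ by a Lebesgue-null set; using that for an open set $U$ one has $C_s(U)=\inf\{\|v\|_{H^s_0}^2:v\ge1\text{ a.e.\ on }U\}$ (a standard consequence of the definition via compact exhaustion), it follows that $C_s(G\cup E)=C_s(G)<\varepsilon$. As $\{\bar u_1\neq\bar u_2\}\subset G\cup E$, one concludes $C_s(\{\bar u_1\neq\bar u_2\})<\varepsilon$ for every $\varepsilon>0$, and the uniqueness follows.
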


Thus, it makes sense to identify a function $u\in H^s_0(\Omega)$ with the class of quasi-continuous functions that are equivalent quasi-everywhere. Denote the space of such equivalent classes by $Q_s(\Omega)$. Then, for every element $u\in H^s_0(\Omega)$, there is an associated $\bar{u}\in Q_s(\Omega)$.

Define the space $L^2_{C_s}(\Omega)$ by 
\[L^2_{C_s}(\Omega)=\{\phi\in Q_s(\Omega):\exists u\in H^s_0(\Omega):\bar{u}\geq|\phi|\text{ q.e. in }\Omega\}\] and \[R_{C_s}(\phi)=\inf\{\norm{u}_{H^s_0(\Omega)}:u\in H^s_0(\Omega),\bar{u}\geq|\phi|\text{ q.e.}\},\] which is a norm that makes $L^2_{C_s}(\Omega)$ a Banach space (see Proposition 1.2 of \cite{AttouchPicard}). We want to show that the dual space of $L^2_{C_s}(\Omega)$ can be identified with the order dual of $H^s_0(\Omega)$, i.e. \[[L^2_{C_s}(\Omega)]'=H^{-s}(\Omega)\cap M(\Omega)=H^{-s}_\prec(\Omega)=[H^{-s}(\Omega)]^+-[H^{-s}(\Omega)]^+,\] where $M(\Omega)$ is the set of bounded measures in $\Omega$. Then we have the following result by Theorem \ref{QCApprox} (corresponding to the classical case in \cite{AttouchPicard} Proposition 1.7).
\begin{proposition}\label{QCDensity}The injection of $H^s_0(\Omega)\cap C_c(\Omega)\hookrightarrow L^2_{C_s}(\Omega)$ is dense.\end{proposition}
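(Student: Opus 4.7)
The plan is to factor the density proof through two steps: first that the canonical embedding $\jmath\colon H^s_0(\Omega)\to L^2_{C_s}(\Omega)$, $u\mapsto\bar u$, has $R_{C_s}$-dense range, and second that $H^s_0(\Omega)\cap C_c(\Omega)$ is $\|\cdot\|_{H^s_0}$-dense in $H^s_0(\Omega)$. The second is exactly the regularity of the Dirichlet form $(\mathcal{E}_a,H^s_0(\Omega))$ from Theorem \ref{DirichletFormThm}, and since the inclusion is continuous (for $u\in H^s_0(\Omega)\cap C_c(\Omega)$, $u$ is already quasi-continuous and $|u|\in H^s_0(\Omega)$ by the unit contraction property in Corollary 3.3, so $R_{C_s}(u)\leq\|u\|_{H^s_0(\Omega)}$), the $\|\cdot\|_{H^s_0}$-density pushes through to $R_{C_s}$-density. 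Thus the substance of the argument is the first step.

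For that, fix $\phi\in L^2_{C_s}(\Omega)$. Splitting $\phi=\phi^+-\phi^-$ (each piece remains in $L^2_{C_s}(\Omega)$ since the lattice operations preserve both quasi-continuity and q.e.\ domination), I reduce to $\phi\geq 0$ and pick $u\in H^s_0(\Omega)$, $u\geq 0$, with $\bar u\geq\phi$ q.e. I then truncate in height: set $u_k=u\wedge k$ and $\phi^{(k)}=\phi\wedge\bar u_k$. By the unit contraction, the Gagliardo integrand of $u-u_k=(u-k)^+$ is pointwise dominated by that of $u$ and vanishes pointwise as $k\to\infty$, so Lebesgue's dominated convergence gives $u_k\to u$ in $H^s_0(\Omega)$. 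Since $0\leq\phi-\phi^{(k)}\leq\overline{u-u_k}$ q.e., this yields $R_{C_s}(\phi-\phi^{(k)})\leq\|u-u_k\|_{H^s_0(\Omega)}\to 0$, so I may henceforth assume $\phi$ is bounded q.e.\ by some $M$.

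For bounded quasi-continuous $\phi$, quasi-continuity furnishes, for every $\delta>0$, an open $G_\delta\subset\Omega$ with $C_s(G_\delta)<\delta$ on whose complement $\phi$ is continuous. I extend this restriction to $\overline\Omega$ by Tietze, truncate its modulus at $M$, multiply by a cutoff $\xi_\delta\in C_c^\infty(\Omega)$ with $\|u-u\xi_\delta\|_{H^s_0(\Omega)}<\delta$, and mollify to obtain $\phi_\delta\in C_c^\infty(\Omega)\subset H^s_0(\Omega)\cap C_c(\Omega)$. Choosing the mollification scale small enough ensures $|\phi-\phi_\delta|\leq\eta(\delta)$ uniformly on $\mathrm{supp}(\xi_\delta)\setminus G_\delta$ with $\eta(\delta)\to 0$, while on $\Omega\setminus\mathrm{supp}(\xi_\delta)$ (where $\phi_\delta=0$) we have $|\phi-\phi_\delta|=|\phi|\leq\bar u$ q.e. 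The definition of $s$-capacity yields $w_\delta\in H^s_0(\Omega)$ with $\bar w_\delta\geq 1$ q.e.\ on $G_\delta$ and $\|w_\delta\|_{H^s_0(\Omega)}\leq 2\sqrt\delta$. The dominator $v_\delta:=2Mw_\delta+(\eta(\delta)/\|\xi_\delta\|_\infty)\xi_\delta+(u-u\xi_\delta)$ then satisfies $\bar v_\delta\geq|\phi-\phi_\delta|$ q.e.\ by piecing together the three regions, with $\|v_\delta\|_{H^s_0(\Omega)}\leq 4M\sqrt\delta+\eta(\delta)\|\xi_\delta\|_{H^s_0(\Omega)}/\|\xi_\delta\|_\infty+\delta$; a coupled choice of the mollification scale, the support of $\xi_\delta$, and $\delta$ drives this to zero, yielding $R_{C_s}(\phi-\phi_\delta)\to 0$.

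The main technical obstacle lies in the middle term of $v_\delta$: the uniform error $\eta(\delta)$ on the ``good region'' $\mathrm{supp}(\xi_\delta)\setminus G_\delta$ cannot be dominated by a constant since no constant lies in $H^s_0(\Omega)$, so it must be absorbed by a cutoff whose $H^s_0$-norm may inflate as its support enlarges to approximate the indicator of a large compact subset of $\Omega$. Balancing the decay of $\eta(\delta)$ against the growth of $\|\xi_\delta\|_{H^s_0(\Omega)}$, while simultaneously controlling the tail $u-u\xi_\delta$ and the capacitary term $w_\delta$, is the crux of the construction; all three ingredients must be tuned together rather than independently.
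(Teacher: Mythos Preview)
The paper's own proof is a one-line invocation of Attouch--Picard's Proposition~1.7: once quasi-continuous representatives exist (Theorem~\ref{QCApprox}) and a.e.\ nonnegativity transfers to q.e.\ nonnegativity, the fractional setting coincides with the classical one and the cited result applies verbatim. Your proposal, by contrast, attempts a fully self-contained Tietze--cutoff--mollification construction, so the two are not comparable in method; what follows concerns the internal correctness of your argument.

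First a structural remark: the two-step plan you announce (density of $\jmath(H^s_0(\Omega))$ in $L^2_{C_s}(\Omega)$, then regularity of the Dirichlet form) is never actually used, since the approximant $\phi_\delta$ you build already lies in $C_c^\infty(\Omega)\subset H^s_0(\Omega)\cap C_c(\Omega)$. This is harmless but worth tidying.

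The substantive gap is in the dominator $v_\delta$. Your claim that $|\phi-\phi_\delta|\le\eta(\delta)$ uniformly on $\mathrm{supp}(\xi_\delta)\setminus G_\delta$ is false on the transition ring $\{0<\xi_\delta<\|\xi_\delta\|_\infty\}\setminus G_\delta$: there, before mollification, the error is $\phi(1-\xi_\delta)$ which can be of order $M$, not $\eta(\delta)$. The term $u(1-\xi_\delta)$ absorbs this piece, but the additional mollification error (of order $\eta(\delta)$, supported on an $\epsilon$-neighbourhood of $\mathrm{supp}(\xi_\delta)$) is \emph{not} covered by your middle term $(\eta(\delta)/\|\xi_\delta\|_\infty)\xi_\delta$, which is strictly smaller than $\eta(\delta)$ precisely on that ring. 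The repair is to replace the middle term by $\eta(\delta)\xi'_\delta$ for a second cutoff $\xi'_\delta\in C_c^\infty(\Omega)$ with $\xi'_\delta\equiv 1$ on an $\epsilon$-neighbourhood of $\mathrm{supp}(\xi_\delta)$. With $\xi'_\delta$ fixed \emph{before} the mollification scale is chosen, $\eta(\delta)\|\xi'_\delta\|_{H^s_0(\Omega)}\to 0$ as the mollification scale shrinks, and the construction becomes a clean sequential choice (first $\delta$ and $G_\delta$, then $\xi_\delta$ so that $\|u-u\xi_\delta\|_{H^s_0}$ is small, then $\xi'_\delta$, then the mollification scale) rather than the ``coupled'' balancing you allude to but do not carry out in the final paragraph.
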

\begin{proof} This simply follows from Theorem \ref{QCApprox}, since $u\geq0$ a.e. on $\Omega$ implies $u\geq0$ q.e. on $\Omega$.\end{proof}

For $K\subset\Omega$, recall that one says that $u\succeq0$ on $K$ (or $u\geq0$ on $K$ in the sense of $H^s_0(\Omega)$) if there exists a sequence of Lipschitz functions $u_k\to u$ in $H^s_0(\Omega)$ such that $u_k\geq0$ on $K$.

Let $K\subset\Omega$ be any compact subset. Define the nonempty closed convex set of $H^s_0(\Omega)$ by \[\mathbb{K}^s_K=\{v\in H^s_0(\Omega):v\succeq1\text{ on }K\}.\] Consider the following variational inequality \begin{equation}\label{CapVarIneq}u\in\mathbb{K}^s_K:\mathcal{E}_a(u,v-u)\geq0,\quad\forall v\in\mathbb{K}^s_K.\end{equation} This variational inequality has clearly a unique solution and consequently we can also extend to the $s$-fractional framework the following theorem which is due to Stampacchia \cite{Stampacchia1965} in the case $s=1$.

\begin{theorem}[Radon measure for the bilinear form $\mathcal{E}_a$]\label{RadonMeasure}
For any compact $K\subset\Omega$, the unique solution $u$ of \eqref{CapVarIneq}, which is called the \emph{$(s,a)$-capacitary potential} of $K$, is such that \[u=1\text{ on }K\text{ (in the sense of }H^s_0(\Omega))\]\[\mu=\mathcal{L}_au\geq0\text{ with }supp(\mu)\subset K.\] Moreover, for the non-negative Radon measure $\mu$, one has \[C_s^a(K)=\mathcal{E}_a(u,u)=\int_\Omega d\mu=\mu(K)\] and this number is called the \emph{$(s,a)$-capacity} of $K$ with respect to $\mathcal{E}_a(\cdot,\cdot)$ (or to the operator $\mathcal{L}_a$).
\end{theorem}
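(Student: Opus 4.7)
The plan is to extract from the variational inequality \eqref{CapVarIneq} four facts in order: $0 \leq u \leq 1$ almost everywhere; $\mu := \mathcal{L}_au$ is a non-negative Radon measure supported in $K$; $\bar u = 1$ quasi-everywhere on $K$; and the identity $\mathcal{E}_a(u,u) = \mu(K)$. Existence and uniqueness of $u$ follow at once from the Stampacchia theorem applied to the bounded, coercive bilinear form $\mathcal{E}_a$ (Theorem~\ref{DirichletFormThm}) on the nonempty closed convex set $\mathbb{K}^s_K$. To obtain $u \geq 0$, I would test with $v = u^+$, which lies in $\mathbb{K}^s_K$ because a sequence of Lipschitz functions $u_k \to u$ in $H^s_0(\Omega)$ with $u_k \geq 1$ on $K$ satisfies $u_k^+ = u_k \geq 1$ on $K$ and $u_k^+ \to u^+$; the VI then gives $\mathcal{E}_a(u, u^-) \geq 0$, and writing $\mathcal{E}_a(u, u^-) = \mathcal{E}_a(u^+, u^-) - \mathcal{E}_a(u^-, u^-)$, the argument of Theorem~\ref{BTmonotone} (showing $\mathcal{E}_a(u^+, u^-) \leq 0$) combined with coercivity forces $u^- = 0$. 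Next, testing $v = u + \psi$ for arbitrary $\psi \in H^s_0(\Omega)$ with $\psi \geq 0$ yields $\mathcal{E}_a(u, \psi) \geq 0$; by Schwartz's theorem on non-negative distributions, $\mu = \mathcal{L}_au$ is thus identified with a non-negative Radon measure on $\Omega$. Testing instead with $v = u \pm \psi$ for $\psi \in C_c^\infty(\Omega \setminus K)$ (both choices are admissible since $\psi$ vanishes near $K$) gives $\mathcal{E}_a(u, \psi) = 0$, whence $\operatorname{supp}(\mu) \subset K$.

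For the bound $u \leq 1$, I would set $\phi := (u-1)^+ \in H^s_0(\Omega)$ and test with $v = u \wedge 1 \in \mathbb{K}^s_K$ (truncation preserves the $\succeq 1$ condition on $K$ by approximation with the same Lipschitz sequence). The VI gives $\mathcal{E}_a(u, \phi) \leq 0$; on the other hand, since $\phi \geq 0$ and $\mu \geq 0$, one has $\mathcal{E}_a(u, \phi) = \langle \mu, \phi \rangle \geq 0$, so in fact $\mathcal{E}_a(u, \phi) = 0$. Decomposing $u = (u \wedge 1) + \phi$ gives $\mathcal{E}_a(u \wedge 1, \phi) + \mathcal{E}_a(\phi, \phi) = 0$. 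On the support of $\phi$ one has $u \wedge 1 = 1$, while $u \wedge 1 \leq 1$ pointwise in $\Omega$ and $\widetilde{u \wedge 1} = 0$ outside $\Omega$, so the $\varepsilon$-truncated integrand defining $\mathcal{E}_a(u \wedge 1, \phi)$ is pointwise non-negative for every $\varepsilon > 0$; hence $\mathcal{E}_a(u \wedge 1, \phi) \geq 0$, and coercivity then forces $\phi = 0$. Combined with $u \succeq 1$ on $K$, this yields $\bar{u} = 1$ quasi-everywhere on $K$.

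The step I expect to be the main obstacle is the final identity $\mathcal{E}_a(u,u) = \mu(K)$: by construction, $\mathcal{E}_a(u,u) = \langle \mathcal{L}_au, u \rangle_{H^{-s}, H^s_0}$, and I need to interpret this duality pairing as the integral $\int_\Omega \bar u \, d\mu$ against the measure $\mu$. The key tool is that $\mu \in H^{-s}(\Omega)$ does not charge sets of zero $s$-capacity, so the pairing $\langle \mu, u \rangle$ is well-defined via the quasi-continuous representative $\bar u$ and coincides with $\int_\Omega \bar{u}\, d\mu$; this is precisely the identification $H^{-s}(\Omega) \cap M(\Omega) = [L^2_{C_s}(\Omega)]'$ discussed in Section~5. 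Once this is established, the identity follows by splitting the integral: $\bar{u} = 1$ quasi-everywhere on $K$ and $\mu$ vanishes on $\Omega \setminus K$, so $\int_\Omega \bar{u}\, d\mu = \int_K 1\, d\mu + \int_{\Omega \setminus K} \bar{u}\, d\mu = \mu(K) + 0 = \mu(K)$, which is by definition the $(s,a)$-capacity $C_s^a(K)$.
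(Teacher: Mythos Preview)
Your proposal is correct and follows the same overall structure as the paper: the same test functions $v=u+\varphi$ (for $\varphi\geq0$) and $v=u\pm\varphi$ (for $\varphi\in\mathcal{D}(\Omega\setminus K)$) to obtain $\mu\geq0$ and $\operatorname{supp}(\mu)\subset K$, and the same test $v=u\wedge1$ for the upper bound. The paper does not bother with your preliminary step $u\geq0$, and for the final identity it simply says it ``follows immediately,'' whereas you spell out the duality argument via $[L^2_{C_s}(\Omega)]'$; your care here is justified.

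The one substantive tactical difference is in proving $u\leq1$. You establish $\mu\geq0$ first and then combine $\mathcal{E}_a(u,\phi)\leq0$ from the variational inequality with $\mathcal{E}_a(u,\phi)=\langle\mu,\phi\rangle\geq0$ to force equality, followed by the decomposition $u=(u\wedge1)+\phi$ and a pointwise sign analysis of the integrand for $\mathcal{E}_a(u\wedge1,\phi)$. The paper instead dispatches this in a single line: testing $v=u\wedge1$ gives $\mathcal{E}_a(u,(u-1)^+)\leq0$, and since $\mathcal{E}_a(1,\cdot)=0$ (constants have zero $s$-gradient) together with strict T-monotonicity one obtains directly
\[
a_*\norm{(u-1)^+}_{H^s_0(\Omega)}^2\leq\mathcal{E}_a(u-1,(u-1)^+)=\mathcal{E}_a(u,(u-1)^+)\leq0.
\]
This avoids needing $\mu\geq0$ as an input and is considerably shorter; your route works but the paper's ordering (upper bound first, measure second) is more economical.
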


\begin{proof}
The proof follows a similar approach to the classical case (\cite{Stampacchia1965} Theorem 3.9 or \cite{ObstacleProblems} Theorem 8.1).
Taking $v=u\wedge1=u-(u-1)^+\in\mathbb{K}^s_K$ in \eqref{CapVarIneq}, one has \[a_*\norm{(u-1)^+}^2_{H^s_0(\Omega)}\leq\mathcal{E}_a(u-1,(u-1)^+)=\mathcal{E}_a(u,(u-1)^+)\leq0\] since the $s$-grad of a constant is zero. Hence $u\preceq1$ in $\Omega$. But $u\in\mathbb{K}^s_K$, so $u\succeq1$ on $K$. Therefore, the first result $u=1$ on $K$ follows.

For the second result, set $v=u+\varphi\in\mathbb{K}^s_K$ in \eqref{CapVarIneq} with an arbitrary $\varphi\in \mathcal{D}(\Omega)$, $\varphi\geq0$. Then, by the Riesz-Schwartz theorem (see for instance \cite{AdamsHedberg} Theorem 1.1.3), there exists a non-negative Radon measure $\mu$ on $\Omega$ such that \[\langle\mathcal{L}_au,\varphi\rangle=\mathcal{E}_a(u,\varphi)=\int_\Omega\varphi \,d\mu,\quad\forall\varphi\in\mathcal{D}(\Omega).\]
Moreover, for $x\in\Omega\backslash K$, there is a neighbourhood $O\subset\Omega\backslash K$ of $x$ so that $u+\varphi\in\mathbb{K}^s_K$ for any $\varphi\in\mathcal{D}(O)$. Therefore,\[\mathcal{E}_a(u,\varphi)=0,\quad\forall\varphi\in\mathcal{D}(\Omega\backslash K)\]which means $\mu=\mathcal{L}_au=0$ in $\Omega\backslash K$. Therefore, $supp(\mu)\subset K$ and the third result follows immediately.
\end{proof}

We observe that when $a$ corresponds to the kernel of the fractional Laplacian, the $(s,a)$-capacity corresponds to the $s$-capacity and the \emph{$s$-capacitary potential} of a compact set $K$
is the solution of the obstacle problem \eqref{CapVarIneq} when the bilinear form is the inner product in $H^s_0(\Omega)$ and we have a simple comparison of the capacities in the following proposition.
\begin{proposition}\label{CapProp5.4}
For any compact subset $E\subset\Omega$, \[a_*C_s(E)\leq C_s^a(E)\leq \frac{{a^*}^2}{a_*}C_s(E).\]
\end{proposition}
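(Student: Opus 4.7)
The plan is to compare both capacities by working directly with the respective capacitary potentials: $u_a \in \mathbb{K}^s_E$ solving \eqref{CapVarIneq} (so $C_s^a(E) = \mathcal{E}_a(u_a, u_a)$ by Theorem~\ref{RadonMeasure}), and the standard $s$-capacitary potential $u_s \in \mathbb{K}^s_E$ that minimizes $\norm{\cdot}_{H^s_0(\Omega)}^2$ over $\mathbb{K}^s_E$ (so $C_s(E) = \norm{u_s}_{H^s_0(\Omega)}^2$). The crucial structural point is that both potentials lie in the common admissible set $\mathbb{K}^s_E$, which is what enables the cross-testing below.

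For the lower bound, the observation is that $u_a \in \mathbb{K}^s_E$ is automatically admissible in the definition of $C_s(E)$, so $C_s(E) \leq \norm{u_a}_{H^s_0(\Omega)}^2$. Invoking the coercivity of $\mathcal{E}_a$ from Theorem~\ref{DirichletFormThm} then produces
\[a_* C_s(E) \leq a_* \norm{u_a}_{H^s_0(\Omega)}^2 \leq \mathcal{E}_a(u_a, u_a) = C_s^a(E),\]
which is the desired left-hand inequality.

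For the upper bound, I would test the variational inequality \eqref{CapVarIneq} for $u_a$ with the admissible choice $v = u_s$, obtaining $\mathcal{E}_a(u_a, u_a) \leq \mathcal{E}_a(u_a, u_s)$. The boundedness of $\mathcal{E}_a$ (again from Theorem~\ref{DirichletFormThm}) then yields $\mathcal{E}_a(u_a, u_s) \leq a^* \norm{u_a}_{H^s_0(\Omega)} \norm{u_s}_{H^s_0(\Omega)}$, and chaining this with coercivity cancels one factor of $\norm{u_a}_{H^s_0(\Omega)}$ to give $\norm{u_a}_{H^s_0(\Omega)} \leq (a^*/a_*) \norm{u_s}_{H^s_0(\Omega)}$. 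Substituting back into the boundedness estimate yields
\[C_s^a(E) = \mathcal{E}_a(u_a, u_a) \leq a^* \norm{u_a}_{H^s_0(\Omega)} \norm{u_s}_{H^s_0(\Omega)} \leq \frac{{a^*}^2}{a_*} C_s(E).\]

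The main conceptual obstacle is that $\mathcal{E}_a$ is not assumed symmetric, so $u_a$ cannot be characterized as a minimizer of $\mathcal{E}_a(\cdot,\cdot)$ and a direct energy-comparison argument is unavailable. The resolution is to replace minimization by the variational inequality, which still delivers the one-sided bound $\mathcal{E}_a(u_a, u_a) \leq \mathcal{E}_a(u_a, u_s)$ and, combined with the two-sided control provided by coercivity and boundedness of $\mathcal{E}_a$, closes the estimate with the expected constant $({a^*}^2/a_*)$.
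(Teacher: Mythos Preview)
Your proof is correct and the upper-bound argument is essentially identical to the paper's: both test the variational inequality \eqref{CapVarIneq} for the $(s,a)$-potential with the $s$-capacitary potential, then combine boundedness and coercivity of $\mathcal{E}_a$; the only cosmetic difference is that the paper absorbs the cross term via Young's inequality, whereas you first isolate $\norm{u_a}_{H^s_0(\Omega)}\leq(a^*/a_*)\norm{u_s}_{H^s_0(\Omega)}$ and substitute back, which yields the same constant.

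For the lower bound your route is slightly more direct than the paper's. The paper argues symmetrically, invoking the variational inequality for the $s$-capacitary potential $\bar u$ (tested with the $(s,a)$-potential) and then applying Cauchy--Schwarz and Young together with coercivity. You instead simply observe that $u_a\in\mathbb{K}^s_E$ is admissible in the infimum defining $C_s(E)$, so $C_s(E)\leq\norm{u_a}_{H^s_0(\Omega)}^2$, and coercivity finishes it in one line. Your argument is shorter and avoids invoking the second variational inequality altogether; the paper's version has the aesthetic advantage of treating both bounds by the same mechanism, but nothing is gained in terms of constants or generality.
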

\begin{proof}
Let $u$ be the $(s,a)$-capacitary potential of $E$, and $\bar{u}$ be the $s$-capacitary potential of $E$. Since $\bar{u}\succeq1$ on $E$, we can choose $v=\bar{u}\in\mathbb{K}^s_{E}$ in \eqref{CapVarIneq} to get
\begin{align*}C_s^a(E)=\mathcal{E}_a(u,u)\leq&\mathcal{E}_a(u,\bar{u})\\\leq& a^*\left(\int_{\mathbb{R}^d}|D^su|^2\right)^\frac{1}{2}\left(\int_{\mathbb{R}^d}|D^s\bar{u}|^2\right)^\frac{1}{2}\\\leq& \frac{a_*}{2}\int_{\mathbb{R}^d}|D^su|^2+\frac{{a^*}^2}{2a_*}\int_{\mathbb{R}^d}|D^s\bar{u}|^2\\\leq&\frac{1}{2} \mathcal{E}_a(u,u)+\frac{{a^*}^2}{2a_*}C_s(E)\\=&\frac{1}{2}C_s^a(E)+\frac{{a^*}^2}{2a_*}C_s(E)\end{align*} by Cauchy-Schwarz inequality and the coercivity of $a$. Similarly, we can choose $v=u\in\mathbb{K}^s_{E}$ for \eqref{CapVarIneq} with $a$ being the identity for $C_s(E)$ to get, using the coercivity of $a$,
\begin{align*}C_s(E)=\mathcal{E}_a(\bar{u},\bar{u})\leq&\mathcal{E}_a(\bar{u},u)\\\leq&\left(\int_{\mathbb{R}^d}|D^s\bar{u}|^2\right)^\frac{1}{2}\left(\int_{\mathbb{R}^d}|D^su|^2\right)^\frac{1}{2}\\\leq& \frac{1}{2}\int_{\mathbb{R}^d} |D^s\bar{u}|^2+\frac{1}{2}\int_{\mathbb{R}^d}|D^su|^2\\\leq& \frac{1}{2}C_s(E)+\frac{1}{2a_*}\mathcal{E}_a(u,u)\\=& \frac{1}{2}C_s(E)+\frac{1}{2a_*}C_s^a(E).\end{align*}
\end{proof}

Using this definition of the Radon measure, we recall that two quasi-continuous functions which are equal (or, $\leq$) $\mu$-a.e. on an open subset of $\mathbb{R}^d$ are also equal (or, $\leq$) q.e. on that set (see \cite{FukushimaDirichletForms} Lemma 2.1.4).

Recall that a Radon measure $\mu$ is said to be of finite energy relatively to $H^s_0(\Omega)$ if its restriction to $H^s_0(\Omega)\cap C_c(\Omega)$ is continuous for the topology of $H^s_0(\Omega)$, by means of \[\langle\mu,v\rangle=\int_\Omega v\,d\mu,\quad\forall v\in H^s_0(\Omega)\cap C_c(\Omega).\] Such a finite energy measure can in fact be defined for any Dirichlet form $\mathcal{E}$ (see \cite{FukushimaDirichletForms} Section 2.2 and Example 2.2.1 pages 87--91). We denote by $E^+(H^s_0(\Omega))$ the cone of positive finite energy measures relative to $H^s_0(\Omega)$. Then $\mu$ is of finite energy if and only if there exists $w_\mu \in H^{-s}(\Omega)$ such that \[\langle w_\mu,v\rangle=\int_\Omega v\,d\mu\quad\forall v\in H^s_0(\Omega)\cap C_c(\Omega),\] and $E^+(H^s_0(\Omega))$ can be identified with $[H^{-s}(\Omega)]^+$, the positive cone of $H^{-s}(\Omega)=[H^s_0(\Omega)]'$, by the mapping $\mu\mapsto w_\mu$. Moreover, whenever $\mu\in E^+(H^s_0(\Omega))$, the mapping $u\in H^s_0(\Omega)\mapsto\bar{u}$ is continuous from $H^s_0(\Omega)$ in $L^1(\mu)$ and whenever $u\in H^s_0(\Omega)$, $\int_\Omega\bar{u}\,d\mu=\langle w_\mu,v\rangle$. Note that in the particular case of the space $H^s_0(\Omega)$, the mapping $u\in H^s_0(\Omega)\mapsto\bar{u}\in L^1(\mu)$ is compact; this follows from the fact that $\int_\Omega|\bar{u}_n|\,d\mu=\langle w_\mu,|u_n|\rangle$ and that if $u_n\rightharpoonup0$ in $H^s_0(\Omega)$ then $|u_n|\rightharpoonup0$ in $H^s_0(\Omega)$.

Extending these results to $L^2_{C_s}(\Omega)$, we have the following result.
\begin{proposition}\label{AttouchPicardProp12} Let $\mu\in E^+(H^s_0(\Omega))$. Then $L^2_{C_s}(\Omega)\subset L^1(\mu)$ and this inclusion is continuous.\end{proposition}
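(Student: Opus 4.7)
The plan is to reduce everything to the identification $\int_\Omega \bar u\, d\mu = \langle w_\mu, u\rangle$ for $u \in H^s_0(\Omega)$, which was stated in the paragraph preceding the proposition, and then pass from quasi-everywhere inequalities to $\mu$-a.e.\ inequalities.

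\textbf{Step 1 (Finite-energy measures do not charge polar sets).} I would first establish the standard fact that any $\mu \in E^+(H^s_0(\Omega))$ vanishes on every set of $s$-capacity zero. Given a compact $N \subset \Omega$ with $C_s(N) = 0$, pick $u_n \in H^s_0(\Omega)$ with $u_n \geq 1$ q.e.\ on $N$ and $\|u_n\|_{H^s_0(\Omega)} \to 0$. Then from $\mu(N) \leq \int_\Omega \bar u_n \, d\mu = \langle w_\mu, u_n\rangle \leq \|w_\mu\|_{H^{-s}(\Omega)}\|u_n\|_{H^s_0(\Omega)}$ one gets $\mu(N) = 0$, and inner regularity extends this to arbitrary Borel polar sets. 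Consequently every quasi-continuous function is $\mu$-measurable, and every q.e.\ inequality between quasi-continuous functions is an $\mu$-a.e.\ inequality.

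\textbf{Step 2 (Pointwise bound from the definition of $R_{C_s}$).} For $\phi \in L^2_{C_s}(\Omega)$ and any admissible $u \in H^s_0(\Omega)$ (i.e.\ $\bar u \geq |\phi|$ q.e.), Step 1 gives $\bar u \geq |\phi|$ $\mu$-a.e., so $|\phi|$ is $\mu$-integrable whenever $\bar u$ is and
\begin{equation*}
\int_\Omega |\phi|\, d\mu \;\leq\; \int_\Omega \bar u\, d\mu \;=\; \langle w_\mu, u\rangle \;\leq\; \|w_\mu\|_{H^{-s}(\Omega)} \|u\|_{H^s_0(\Omega)},
\end{equation*}
using exactly the representation $\int_\Omega \bar u \, d\mu = \langle w_\mu, u\rangle$ quoted above the proposition and the duality $|\langle w_\mu, u\rangle| \leq \|w_\mu\|_{H^{-s}(\Omega)}\|u\|_{H^s_0(\Omega)}$.

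\textbf{Step 3 (Infimum over admissible $u$).} Taking the infimum over all $u \in H^s_0(\Omega)$ with $\bar u \geq |\phi|$ q.e.\ yields
\begin{equation*}
\int_\Omega |\phi|\, d\mu \;\leq\; \|w_\mu\|_{H^{-s}(\Omega)}\, R_{C_s}(\phi),
\end{equation*}
which is exactly the statement that $L^2_{C_s}(\Omega) \hookrightarrow L^1(\mu)$ continuously, with embedding constant at most $\|w_\mu\|_{H^{-s}(\Omega)}$.

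The only real obstacle is Step 1: one has to justify that finite-energy measures do not charge polar sets, which for Dirichlet forms is classical (see e.g.\ \cite{FukushimaDirichletForms} or \cite{AdamsHedberg}) but must be written in the present setting where capacity is defined through the $H^s_0(\Omega)$-norm rather than through $\mathcal{E}_a$ itself; the equivalence of these two capacities is guaranteed by Proposition \ref{CapProp5.4}, so no essentially new estimate is needed. Everything else is a direct manipulation using Theorem \ref{QCApprox} and the representation $\mu \leftrightarrow w_\mu$.
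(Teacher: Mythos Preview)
Your proposal is correct and follows essentially the same route as the paper's proof: pick an admissible majorant $v\in H^s_0(\Omega)$ with $\bar v\ge|\phi|$ q.e., pass from q.e.\ to $\mu$-a.e., and use $\int_\Omega\bar v\,d\mu=\langle w_\mu,v\rangle\le\|w_\mu\|_{H^{-s}(\Omega)}\|v\|_{H^s_0(\Omega)}$. The only cosmetic differences are that you make the ``$\mu$ does not charge polar sets'' step explicit (the paper just writes ``q.e., and therefore $\mu$-q.e.'' without further comment), and you phrase continuity as a direct norm inequality with constant $\|w_\mu\|_{H^{-s}(\Omega)}$ whereas the paper checks continuity at zero via sequences; these are equivalent for a linear inclusion.
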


\begin{proof} Let $u\in L^2_{C_s}(\Omega)$. There exists $v\in H^s_0(\Omega)$ such that $\bar{v}\geq|u|$ a.e., and therefore $\mu$-q.e.. Since $\bar{v}\in L^1(\mu)$, $u\in L^1(\mu)$.

Let $(u_n)$ be a sequence in $L^2_{C_s}(\Omega)$ such that $R_{C_s}(u_n)\to0$. Then there exists $(v_n)\in H^s_0(\Omega)$ such that $\bar{v}_n\geq|u_n|$ q.e., and therefore $\mu$-q.e., and $\norm{v_n}_{H^s_0(\Omega)}\to0$. As a result, $\int_\Omega|u_n|\,d\mu\leq\int_\Omega\bar{v}_n\,d\mu=\langle w_\mu,v_n\rangle\leq\|w_\mu\|_{H^{-s}(\Omega)}\norm{v_n}_{H^s_0(\Omega)}\to0$. Therefore $u_n\to0$ in $L^1(\mu)$.\end{proof}

Having these results, we can now identify the dual space of $L^2_{C_s}(\Omega)$ with the order dual of $H^s_0(\Omega)$, as given in the following theorem.

\begin{theorem}[Characterization of Order Dual]\label{OrderDualChar}The dual of $L^2_{C_s}(\Omega)$ is the space of finite energy measures $E^+(H^s_0(\Omega))-E^+(H^s_0(\Omega))$, that is identified with the order dual $H^{-s}_\prec(\Omega)$ of $H^s_0(\Omega)$. More precisely, $L\in [L^2_{C_s}(\Omega)]'$ if and only if there is a unique $\mu$ such as $|\mu|\in E^+(H^s_0(\Omega))$ and $L(\phi)=\int_\Omega\phi \,d\mu$ for all $\phi\in L^2_{C_s}(\Omega)$. In addition, the norm of $L$ in $[L^2_{C_s}(\Omega)]'$ is such that $\norm{L}=\|w_{|\mu|}\|_{H^{-s}(\Omega)}$.\end{theorem}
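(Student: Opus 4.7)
The sufficient direction is immediate from Proposition \ref{AttouchPicardProp12}: given a signed Radon measure $\mu$ with $|\mu| \in E^+(H^s_0(\Omega))$, for any $\phi \in L^2_{C_s}(\Omega)$ and any $v \in H^s_0(\Omega)$ with $\bar{v} \geq |\phi|$ q.e., one estimates
\[
|L(\phi)| = \left|\int_\Omega \phi \, d\mu\right| \leq \int_\Omega \bar{v} \, d|\mu| = \langle w_{|\mu|}, v \rangle \leq \|w_{|\mu|}\|_{H^{-s}(\Omega)} \|v\|_{H^s_0(\Omega)},
\]
and passing to the infimum over admissible $v$ yields $\|L\|_{[L^2_{C_s}(\Omega)]'} \leq \|w_{|\mu|}\|_{H^{-s}(\Omega)}$.

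For the converse, I would exploit that $L^2_{C_s}(\Omega)$ is a Banach lattice under the q.e.\ pointwise order (with $R_{C_s}(|\phi|)=R_{C_s}(\phi)$), so any $L \in [L^2_{C_s}(\Omega)]'$ admits a Riesz decomposition $L = L^+ - L^-$ into positive functionals, with $|L| = L^+ + L^-$ and the standard lattice identity $\||L|\| = \|L\|$. Next, for each compact $K \subset \Omega$, Theorem \ref{RadonMeasure} provides the $(s,a)$-capacitary potential $u_K \in H^s_0(\Omega)$ with $u_K \geq 1$ q.e.\ on $K$; hence every $\phi \in C_c(\Omega)$ with $\mathrm{supp}\,\phi \subset K$ satisfies $|\phi| \leq \|\phi\|_\infty u_K$ q.e., giving $R_{C_s}(\phi) \leq \|\phi\|_\infty \|u_K\|_{H^s_0(\Omega)}$ and showing that $C_c(\Omega)\hookrightarrow L^2_{C_s}(\Omega)$ continuously on each compact support. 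The Riesz--Markov theorem then produces positive Radon measures $\mu^\pm$ with $L^\pm(\phi) = \int_\Omega \phi\, d\mu^\pm$ for all $\phi \in C_c(\Omega)$. To verify $\mu^\pm \in E^+(H^s_0(\Omega))$, I test against $v \in H^s_0(\Omega) \cap C_c(\Omega)$, $v \geq 0$: since $R_{C_s}(v) \leq \|v\|_{H^s_0(\Omega)}$, one obtains $\int_\Omega v \, d\mu^\pm = L^\pm(v) \leq \|L^\pm\| \|v\|_{H^s_0(\Omega)}$, producing $w_{\mu^\pm} \in [H^{-s}(\Omega)]^+$, so that $|\mu| = \mu^+ + \mu^- \in E^+(H^s_0(\Omega))$. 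The identity $L(\phi) = \int_\Omega \phi\, d\mu$ then extends from $H^s_0(\Omega)\cap C_c(\Omega)$ to all of $L^2_{C_s}(\Omega)$ by the density of Proposition \ref{QCDensity}.

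The norm equality is then established by observing that for $v \geq 0$ in $H^s_0(\Omega)$ one has $\langle w_{|\mu|}, v \rangle = \int_\Omega \bar{v} \, d|\mu| = |L|(\bar{v}) \leq \||L|\| R_{C_s}(\bar{v}) \leq \|L\|\|v\|_{H^s_0(\Omega)}$, giving $\|w_{|\mu|}\|_{H^{-s}(\Omega)} \leq \|L\|$, which combined with the forward bound yields $\|L\| = \|w_{|\mu|}\|_{H^{-s}(\Omega)}$. Uniqueness of $\mu$ is automatic since two representing measures must coincide on $H^s_0(\Omega) \cap C_c(\Omega)$, hence on $C_c(\Omega)$ by density (Proposition \ref{QCDensity}), hence as Radon measures by Riesz--Markov uniqueness. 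The main technical obstacle I expect is ensuring that the abstract Riesz decomposition $L = L^+ - L^-$ in the Banach lattice dual matches, via Riesz--Markov, the Jordan decomposition of the resulting signed measure $\mu$; this compatibility requires that q.e.\ inequalities between quasi-continuous functions translate into $|\mu|$-a.e.\ inequalities, which ultimately rests on the fact that finite-energy measures charge no set of zero $s$-capacity (a classical consequence of the finite-energy hypothesis).
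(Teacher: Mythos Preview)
Your forward direction and the overall strategy are essentially the same as the paper's. The converse, however, is organized differently. You invoke the Banach lattice structure of $L^2_{C_s}(\Omega)$ to first split $L=L^+-L^-$ into positive functionals, then apply Riesz--Markov to each $L^\pm$ separately, and only afterwards check finite energy. The paper proceeds more directly: it observes that $L$ restricted to $C_c(\Omega)$ is continuous (using a test function $\psi\in\mathcal{D}(\Omega)$ with $\psi\ge 1$ on a given compact $K$, rather than your capacitary potential $u_K$), so Riesz--Markov produces a signed Radon measure $\mu$ in one stroke; it then verifies $|\mu|\in E^+(H^s_0(\Omega))$ by the total-variation identity
\[
\int_\Omega |u|\,d|\mu|=\sup\{L(\phi):\phi\in C_c(\Omega),\ |\phi|\le|u|\}\le \|L\|\,R_{C_s}(u)\le \|L\|\,\|u\|_{H^s_0(\Omega)}
\]
for $u\in H^s_0(\Omega)\cap C_c(\Omega)$, which simultaneously yields $\|w_{|\mu|}\|_{H^{-s}(\Omega)}\le\|L\|$.

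What this buys: the paper's route is more elementary (no appeal to Banach lattice duality) and, crucially, it completely sidesteps the ``main technical obstacle'' you flagged at the end. Because the paper works with the \emph{measure-theoretic} total variation $|\mu|$ from the outset via the supremum formula above, there is no need to reconcile the abstract Riesz decomposition $L=L^+-L^-$ in the dual lattice with the Jordan decomposition of $\mu$. Your approach is correct, but that compatibility check---while doable---is genuine extra work that the paper's argument simply does not require.
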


\begin{proof} According to Proposition \ref{QCDensity}, $C_c(\Omega)$ is dense in $L^2_{C_s}(\Omega)$ and moreover this injection is continuous; therefore the dual of $L^2_{C_s}(\Omega)$ is a space of measures.

Let $\mu$ be a Radon measure such that $|\mu|\in E^+(H^s_0(\Omega))$. For any $\phi\in L^2_{C_s}(\Omega)$ ($\phi$ is then $\mu$ integrable by Proposition \ref{AttouchPicardProp12}), set $L(\phi)=\int_\Omega\phi \,d\mu$. For any $v\in H^s_0$ such that $\bar{v}\geq|\phi|$ quasi-everywhere, so $\mu$-a.e., we have \[|L(\phi)|=\left|\int_\Omega\phi \,d\mu\right|\leq\int_\Omega|\phi|d|\mu|\leq\bar{v}d|\mu|=(w_{|\mu|},v)\leq\|w_{|\mu|}\|_{H^{-s}(\Omega)}\norm{v_n}_{H^s_0(\Omega)},\] so $|L(\phi)|\leq\|w_{|\mu|}\|R_{C_s}(\phi)$. Therefore $L\in [L^2_{C_s}(\Omega)]'$ and $\norm{L}\leq\|w_{|\mu|}\|_{H^{-s}(\Omega)}$.

Conversely, suppose $L\in [L^2_{C_s}(\Omega)]'$. Let $K$ be a compact subset of $\Omega$ and $\psi\in\mathcal{D}(\Omega)$ such that $\psi\geq1$ in $K$. Whenever $\phi\in C_s(K)$ such that $\norm{\phi}_\infty\leq1$, we have $\psi\geq|\phi|$ in $\Omega$, therefore \[|L(\phi)|\leq\norm{L}\cdot R_{C_s}(\psi).\] We deduce that there exists a Radon measure $\mu$ in $\Omega$ such that \[\forall\phi\in C_c(\Omega),\quad L(\phi)=\int_\Omega\phi \,d\mu.\] In addition $|\mu|\in E^+(H^s_0)$ because, whenever $u\in H^s_0(\Omega)\cap C_c(\Omega)$, \begin{align*}\left|\int_\Omega ud|\mu|\right|&\leq\int_\Omega|u|d|\mu|\\&=\sup\{L(\phi):\phi\in C_c(\Omega),|\phi|\leq|u|\}\\&\leq\sup\{\norm{L}\cdot R_{C_s}(\phi):|\phi|\leq|u|\}\\&\leq\norm{L}\cdot R_{C_s}(u)\\&\leq\norm{L}\cdot \norm{u}_{H^s_0(\Omega)}\end{align*} and also $\|w_{|\mu|}\|_{H^{-s}(\Omega)}\leq\norm{L}$.

Finally, it follows, from the density of $C_c(\Omega)$ in $L^2_{C_s}(\Omega)$ and Proposition \ref{AttouchPicardProp12}, that \[\forall\phi\in  L^2_{C_s}(\Omega),\quad L(\phi)=\int_\Omega\phi \,d\mu.\]
\end{proof}

\subsection{Lewy-Stampacchia inequalities in $H^{-s}_\prec(\Omega)$}

For completeness, we state the Lewy-Stampacchia inequalities in the dual space $H^{-s}_\prec(\Omega)$.

\begin{theorem}\label{ObsPbH-s} The unique solution $u$ to the obstacle problem \eqref{ObsPb} with compatible obstacle $\psi\in H^s(\mathbb{R}^d)$ and $F,\mathcal{L}_a\psi\in H^{-s}_\prec(\Omega)$, satisfies \begin{equation}\label{ObsPbH-sEq}F\leq \mathcal{L}_au\leq F\vee\mathcal{L}_a\psi\quad\text{ in }H^{-s}_\prec(\Omega).\end{equation}
\end{theorem}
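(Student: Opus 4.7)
The plan is to mirror, in the order-dual setting, the bounded penalization scheme of Theorem~\ref{PenalizedProbSoln} that yielded Theorem~\ref{LewyStampacchia}. By Theorem~\ref{OrderDualChar}, the hypothesis $F,\mathcal{L}_a\psi\in H^{-s}_\prec(\Omega)$ identifies both with finite-energy signed Radon measures, so the lattice operation $\zeta:=(\mathcal{L}_a\psi-F)^+$ produces a positive finite-energy measure on $\Omega$, and this is the object that will replace the $L^{2^\#}(\Omega)$-function $\zeta$ from Section~4. The lower bound is immediate and requires no penalization: testing the variational inequality \eqref{ObsPb} with $v=u+\varphi$ for arbitrary $\varphi\in H^s_0(\Omega)$ with $\varphi\geq 0$ (admissible since $u+\varphi\geq u\geq\psi$) yields $\mathcal{E}_a(u,\varphi)\geq\langle F,\varphi\rangle$, giving $\mathcal{L}_a u-F\in[H^{-s}(\Omega)]^+\subset H^{-s}_\prec(\Omega)$. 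In particular $\mathcal{L}_a u$ itself lies in $H^{-s}_\prec(\Omega)$, so the upper-bound inequality in \eqref{ObsPbH-sEq} is well-posed as an order-dual statement.

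For the upper bound I would solve, for each $\varepsilon>0$, the penalized problem
\begin{equation*}
u_\varepsilon\in H^s_0(\Omega):\quad \mathcal{E}_a(u_\varepsilon,v)+\int_\Omega\theta_\varepsilon(\overline{u_\varepsilon-\psi})\,\bar{v}\,d\zeta=\langle F,v\rangle+\int_\Omega\bar{v}\,d\zeta\quad\forall v\in H^s_0(\Omega),
\end{equation*}
where $\bar{v}$ denotes the quasi-continuous representative (Theorem~\ref{QCApprox}). Integration of quasi-continuous functions against $\zeta$ is continuous on $H^s_0(\Omega)$ by Proposition~\ref{AttouchPicardProp12}, and combined with the coercivity of $\mathcal{E}_a$ and the monotone Lipschitz character of $\theta_\varepsilon$ this yields a unique $u_\varepsilon$. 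Retracing the proof of Theorem~\ref{PenalizedProbSoln}---testing with $(\psi-u_\varepsilon)^+$ and using the strict T-monotonicity from Theorem~\ref{BTmonotone} together with the fact that $\zeta\geq\mathcal{L}_a\psi-F$ in the order sense---one shows $u_\varepsilon\geq\psi$ quasi-everywhere, hence $u_\varepsilon\in\mathbb{K}^s_\psi$, and the energy estimate $a_*\norm{u-u_\varepsilon}_{H^s_0(\Omega)}^2\leq \varepsilon\, C_\theta\,\zeta(\Omega)$, so that $u_\varepsilon\to u$ strongly in $H^s_0(\Omega)$.

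The conclusion is then read off the penalized equation, rewritten in $H^{-s}_\prec(\Omega)$ as $\mathcal{L}_a u_\varepsilon=F+\bigl(1-\theta_\varepsilon(\overline{u_\varepsilon-\psi})\bigr)\zeta$: since $0\leq 1-\theta_\varepsilon\leq 1$ and $\zeta\geq 0$ we obtain $F\leq\mathcal{L}_a u_\varepsilon\leq F+\zeta=F\vee\mathcal{L}_a\psi$ in the order cone, and the boundedness of $\mathcal{L}_a\colon H^s_0(\Omega)\to H^{-s}(\Omega)$ from Theorem~\ref{DirichletFormThm} gives $\mathcal{L}_a u_\varepsilon\to\mathcal{L}_a u$, preserving both inequalities under limits within $[H^{-s}(\Omega)]^+$. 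The main obstacle I anticipate is the rigorous handling of the nonlinear penalization term with measure-valued $\zeta$: verifying that $v\mapsto\theta_\varepsilon(\overline{v-\psi})\,\zeta$ defines a well-posed, monotone, Lipschitz perturbation of $\mathcal{L}_a$ on $H^s_0(\Omega)$ through the quasi-continuous framework of Section~5.1, and that the comparison argument yielding $u_\varepsilon\geq\psi$ goes through when $\zeta$ is a singular measure potentially concentrated on the (a priori unknown) coincidence set $\{u=\psi\}$; the analogous extensions of Theorems~\ref{LewyStampacchia2} and \ref{LewyStampacchiaN} to $H^{-s}_\prec(\Omega)$-data would follow by the same scheme applied symmetrically to $(\mathcal{L}_a\varphi-F)^-$ and to the consecutive pairs $f^i,f^{i+1}$.
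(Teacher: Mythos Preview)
Your lower-bound argument is exactly right and is the standard one. For the upper bound, however, you are working much harder than the paper does. The paper's proof is a single sentence: since $\mathcal{L}_a$ is strictly T-monotone (Theorem~\ref{BTmonotone}), the inequality \eqref{ObsPbH-sEq} is an immediate consequence of Mosco's abstract Lewy-Stampacchia inequality \cite{Mosco1976} (see also Theorem~2.1 of \cite{ObstacleProblems}), which gives the full result in the order dual for any strictly T-monotone coercive operator. No penalization is needed at this level of generality; the heavy lifting was already done in the abstract framework.

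Your route---extending the bounded penalization of Section~4 to measure data $\zeta=(\mathcal{L}_a\psi-F)^+\in[H^{-s}(\Omega)]^+$---is in principle viable and, if carried out, would give a self-contained constructive proof. But the technical obstacle you flag is real: for singular $\zeta$ one must show that $v\mapsto\theta_\varepsilon(\overline{v-\psi})\,\zeta$ is a well-defined, monotone, hemicontinuous map $H^s_0(\Omega)\to H^{-s}(\Omega)$, which requires controlling products of quasi-continuous functions against finite-energy measures and checking that the comparison step $u_\varepsilon\geq\psi$ survives when $\zeta$ may charge the contact set. None of this is insurmountable, but it is genuine additional work that the citation to \cite{Mosco1976} bypasses entirely. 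What your approach would buy is an explicit approximation scheme and an error estimate analogous to \eqref{PenalizationErrorEst} in the measure setting; what the paper's approach buys is brevity and the ability to invoke the same abstract machinery again for Theorems~\ref{2ObstaclesH-s} and \ref{NmembranesH-s} (via \cite{RodriguesTeymurazyan}).
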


\begin{proof}
Since $\mathcal{L}_a$ is strictly T-monotone, this is a direct consequence of the abstract Lewy-Stampacchia inequality obtained by Mosco in \cite{Mosco1976} (see also Theorem 2.1 of \cite{ObstacleProblems}). 
\end{proof}

We next consider the generalizations to the two obstacles problem and to the N-membranes problem.

Similarly, as a direct consequence of Theorem 4.2 of \cite{RodriguesTeymurazyan}, we may also state the Lewy-Stampacchia inequality for the two obstacles problem.

\begin{theorem}\label{2ObstaclesH-s}
The solution $u$ of the two obstacles problem \[u\in \mathbb{K}^s_{\varphi,\psi}:\quad\mathcal{E}_a(u,v-u)\geq\langle F,v-u\rangle\quad\forall v\in\mathbb{K}^s_{\varphi,\psi}\] with $\mathbb{K}^s_{\varphi,\psi}$ given by \eqref{2ObstaclesConvex} with data $F\in H^{-s}_\prec(\Omega)$ and $\mathcal{L}_a\psi,\mathcal{L}_a\varphi\in H^{-s}_\prec(\Omega)$ satisfies \begin{equation}\label{2ObstaclesH-sEq}F\wedge\mathcal{L}_a\varphi\leq \mathcal{L}_au\leq F\vee\mathcal{L}_a\psi\quad\text{ in }H^{-s}_\prec(\Omega).\end{equation} 
\end{theorem}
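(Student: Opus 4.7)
The plan is to view this as a direct application of the abstract Lewy--Stampacchia inequality for two obstacles proved in Theorem 4.2 of \cite{RodriguesTeymurazyan}. That abstract result is formulated for a coercive, continuous, strictly T-monotone bilinear form on a Hilbert lattice $V$, with admissible obstacles generating a nonempty convex set and with data in an order dual where the lattice operations $\vee,\wedge$ are meaningful. My first task is therefore to check that all the ingredients from that abstract framework are in place in our nonlocal setting, and then to apply the result essentially off the shelf.

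Concerning the structural hypotheses: by Theorem \ref{DirichletFormThm}, $(\mathcal{E}_a,H^s_0(\Omega))$ is closed, regular, bounded and coercive, so the variational inequality has a unique solution $u \in \mathbb{K}^s_{\varphi,\psi}$ by the Stampacchia theorem (using that $\mathbb{K}^s_{\varphi,\psi}\neq\emptyset$ thanks to compatibility of $\varphi$ and $\psi$). By Theorem \ref{BTmonotone}, the operator $\mathcal{L}_a$ is strictly T-monotone. The space $H^s_0(\Omega)$ is a Hilbert lattice under pointwise a.e.\ ordering and the standard truncation operations $v^+,v^-,v\wedge w,v\vee w$, which are closed in $H^s_0(\Omega)$ (as recalled right after Theorem \ref{DirichletFormThm}).

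For the lattice structure on the data, I would invoke the characterization of the order dual in Theorem \ref{OrderDualChar}: $H^{-s}_\prec(\Omega) = E^+(H^s_0(\Omega)) - E^+(H^s_0(\Omega))$, identified with the signed finite-energy Radon measures. Since $F,\mathcal{L}_a\psi,\mathcal{L}_a\varphi$ lie in this space, the expressions $F\vee \mathcal{L}_a\psi$ and $F\wedge \mathcal{L}_a\varphi$ are well defined as lattice operations of signed measures, and the inequalities \eqref{2ObstaclesH-sEq} have a precise meaning in the order dual $H^{-s}_\prec(\Omega)$. With these identifications, Theorem 4.2 of \cite{RodriguesTeymurazyan} applies verbatim and gives \eqref{2ObstaclesH-sEq}.

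The main obstacle I expect is the verification that the abstract framework of \cite{RodriguesTeymurazyan} is transferable without modification, in particular that the compatibility assumption on $\psi,\varphi$ combined with $\mathcal{L}_a\psi,\mathcal{L}_a\varphi\in H^{-s}_\prec(\Omega)$ is exactly what is required there. As an alternative, should any technical discrepancy arise, one can give a direct proof by approximation: use Theorem \ref{OrderDualChar} to approximate $F,\mathcal{L}_a\psi,\mathcal{L}_a\varphi$ strongly in $H^{-s}(\Omega)$ by sequences in $L^{2^\#}(\Omega)$ (for example by mollification of the underlying measures), apply the $L^{2^\#}$-version of the Lewy--Stampacchia inequality already obtained in Theorem \ref{LewyStampacchia2} at each level of the approximation, and pass to the limit using the Lipschitz continuous dependence of solutions on the data together with the fact that the lattice operations $\wedge,\vee$ are continuous in $H^{-s}_\prec(\Omega)$.
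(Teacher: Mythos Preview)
Your proposal is correct and follows exactly the paper's approach: the paper states this theorem as a direct consequence of Theorem~4.2 of \cite{RodriguesTeymurazyan}, relying on the strict T-monotonicity of $\mathcal{L}_a$ and the order-dual characterization, precisely as you do. Your additional verification of the abstract hypotheses and your alternative approximation route via Theorem~\ref{LewyStampacchia2} are sound elaborations, but the paper itself gives no further detail beyond the citation.
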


Then, applying the general Lewy-Stampacchia inequalities for the one obstacle and for the two obstacles problem iteratively in the previous theorem as in the proof of Theorem \ref{LewyStampacchiaN}, we obtain 
\begin{theorem}\label{NmembranesH-s}
The solution $u$ of the N-membranes problem \[u\in \mathbb{K}^s_N:\quad\sum_{i=1}^N\mathcal{E}_a(u_i,v_i-u_i)\geq\sum_{i=1}^N\langle F^i,v_i-u_i\rangle\quad\forall (v_1,\dots v_N)\in\mathbb{K}^s_N\] with $\mathbb{K}^s_N$ given by \eqref{NObstaclesConvex} with data $F=(F^1,\dots,F^N)$ for $F^i\in H^{-s}_\prec(\Omega)$ satisfies \begin{align*}F^1\wedge \mathcal{L}_au_1&\leq F^1\vee\dots\vee F^N\\F^1\wedge F^2\leq \mathcal{L}_au_2&\leq F^2\vee\dots\vee F^N\\\vdots\\F^1\wedge\dots\wedge F^{N-1}\leq \mathcal{L}_au_{N-1}&\leq F^{N-1}\vee F^N\\F^1\wedge\dots\wedge F^N\leq \mathcal{L}_au_N&\leq F^N\end{align*} in $H^{-s}_\prec(\Omega)$.
\end{theorem}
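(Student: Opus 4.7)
The plan is to mirror the strategy of the proof of Theorem \ref{LewyStampacchiaN}, replacing the $L^{2^\#}(\Omega)$ estimates of Theorems \ref{LewyStampacchia} and \ref{LewyStampacchia2} with their $H^{-s}_\prec(\Omega)$-valued analogues in Theorems \ref{ObsPbH-s} and \ref{2ObstaclesH-s}. First I would show that each component $u_j$ of the N-membranes solution is itself the unique solution of an associated obstacle-type problem with datum $F^j\in H^{-s}_\prec(\Omega)$. This is obtained by restricting admissible perturbations in $\mathbb{K}^s_N$ to test vectors of the form $(u_1,\dots,u_{j-1},v,u_{j+1},\dots,u_N)$ with $v$ varying only in the $j$-th slot: $u_1$ then solves the one-obstacle problem in $\mathbb{K}^s_{u_2}$; $u_j$ for $1<j<N$ solves the two-obstacle problem in $\mathbb{K}^s_{u_{j+1},u_{j-1}}$; and $u_N$ solves the one-obstacle problem with upper obstacle $u_{N-1}$, for which the symmetric analogue of Theorem \ref{ObsPbH-s} holds in $H^{-s}_\prec(\Omega)$ (by reversal of the order, or equivalently by extracting the upper-obstacle-only case from the proof of Theorem \ref{2ObstaclesH-s}).

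Applying Theorem \ref{ObsPbH-s} to $u_1$, its upper-obstacle analogue to $u_N$, and Theorem \ref{2ObstaclesH-s} to each intermediate $u_j$, yields the nearest-neighbor estimates
\[F^1 \leq \mathcal{L}_a u_1 \leq F^1 \vee \mathcal{L}_a u_2,\quad F^j \wedge \mathcal{L}_a u_{j-1} \leq \mathcal{L}_a u_j \leq F^j \vee \mathcal{L}_a u_{j+1},\quad F^N \wedge \mathcal{L}_a u_{N-1} \leq \mathcal{L}_a u_N \leq F^N,\]
all interpreted in $H^{-s}_\prec(\Omega)$. The global claim then follows by a finite telescoping: for the upper bounds one starts from $\mathcal{L}_a u_N \leq F^N$, substitutes into the estimate on $\mathcal{L}_a u_{N-1}$ to get $\mathcal{L}_a u_{N-1} \leq F^{N-1}\vee F^N$, and iterates upward to reach $\mathcal{L}_a u_j \leq F^j\vee\cdots\vee F^N$; symmetrically, starting from $\mathcal{L}_a u_1 \geq F^1$ and iterating downward produces $\mathcal{L}_a u_j \geq F^1\wedge\cdots\wedge F^j$.

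The main technical obstacle will be ensuring at each step that the relevant $\mathcal{L}_a u_j$ lies in $H^{-s}_\prec(\Omega)$, since the Lewy-Stampacchia theorems in the dual space require the obstacles to have $\mathcal{L}_a$-images in $H^{-s}_\prec(\Omega)$, and here those obstacles are themselves the neighbouring components $u_{j\pm 1}$. This apparent circularity is resolved by using that $H^{-s}_\prec(\Omega)$ is a vector lattice stable under finite $\vee,\wedge$: the one-sided nearest-neighbor inequalities sandwich each $\mathcal{L}_a u_j$ between finite lattice combinations of $F^1,\dots,F^N$, which already belong to $H^{-s}_\prec(\Omega)$, so membership follows by a finite induction from the ends of the chain. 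As a cleaner alternative, one can apply the same iteration to the strongly convergent penalization approximations $u^\varepsilon_i\to u_i$ in $H^s_0(\Omega)$ from the remark following Theorem \ref{LewyStampacchiaN}, at which level each $\mathcal{L}_a u^\varepsilon_i$ is visibly an $H^{-s}_\prec(\Omega)$-element, and pass to the limit using the order-continuity of $\mathcal{L}_a$ and of the lattice operations.
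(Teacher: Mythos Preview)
Your proposal is correct and follows exactly the approach indicated in the paper, which simply states that one applies the general Lewy--Stampacchia inequalities of Theorems~\ref{ObsPbH-s} and~\ref{2ObstaclesH-s} iteratively ``as in the proof of Theorem~\ref{LewyStampacchiaN}.'' In fact you supply more detail than the paper does, in particular by explicitly addressing the circularity in verifying $\mathcal{L}_a u_{j\pm1}\in H^{-s}_\prec(\Omega)$ via induction from the ends of the chain, a point the paper leaves implicit.
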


\begin{remark}
In the symmetric case, the Lewy-Stampacchia inequalities also follow from the general results of \cite{GMosconi}. The application to Theorem \ref{NmembranesH-s} for the N-membranes problem is new.
\end{remark}

\subsection{The $\mathcal{E}_a$ obstacle problem and the $s$-capacity}

As a simple application of $s$-capacity, we consider the corresponding nonlocal obstacle problem extending some results of \cite{Stampacchia1965} and \cite{AdamsCapacityObstacle} (see also \cite{ObstacleProblems}). In this section we start by the following comparison property for the $(s,a)$-capacity which proof is exactly as in Theorem 3.10 of \cite{Stampacchia1965}, which states that in the case when the kernel $a$ is symmetric the $(s,a)$-capacity is an increasing set function.

\begin{proposition}
For any compact subsets $E_1\subset E_2\subset\Omega$, \[C_s^a(E_1)\leq \left(1+\frac{M}{a_*}\right)^2 C_s^a(E_2),\] where $M=\sup\frac{1}{2}(\mathcal{E}_a(u,v)-\mathcal{E}_a(v,u))$ for $u,v$ such that $\norm{u}_{H^s_0(\Omega)}=\norm{v}_{H^s_0(\Omega)}=1$.
\end{proposition}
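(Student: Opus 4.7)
The plan is to mirror the argument of Theorem 3.10 of \cite{Stampacchia1965}, with the skew-symmetric part of $\mathcal{E}_a$ handled via the constant $M$. First I would introduce the $(s,a)$-capacitary potentials $u_1$ and $u_2$ of $E_1$ and $E_2$ given by Theorem \ref{RadonMeasure}, so that $u_i \in \mathbb{K}^s_{E_i}$, $\mathcal{E}_a(u_i,v-u_i)\geq 0$ for every $v\in \mathbb{K}^s_{E_i}$, and $C_s^a(E_i)=\mathcal{E}_a(u_i,u_i)$. Since $E_1\subset E_2$, one has $\mathbb{K}^s_{E_2}\subset \mathbb{K}^s_{E_1}$, so $u_2$ is admissible in the variational inequality for $u_1$. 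Taking $v=u_2$ yields the basic comparison
\[
C_s^a(E_1)=\mathcal{E}_a(u_1,u_1)\leq \mathcal{E}_a(u_1,u_2).
\]

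Next I would decompose $\mathcal{E}_a = \mathcal{E}_a^{\mathrm{sym}}+\mathcal{E}_a^{\mathrm{skew}}$, where $\mathcal{E}_a^{\mathrm{sym}}(u,v)=\tfrac12(\mathcal{E}_a(u,v)+\mathcal{E}_a(v,u))$ and $\mathcal{E}_a^{\mathrm{skew}}(u,v)=\tfrac12(\mathcal{E}_a(u,v)-\mathcal{E}_a(v,u))$. By the coercivity of $\mathcal{E}_a$ in Theorem \ref{DirichletFormThm}, the symmetric part satisfies $\mathcal{E}_a^{\mathrm{sym}}(u,u)=\mathcal{E}_a(u,u)\geq a_*\|u\|_{H^s_0(\Omega)}^2\geq 0$, so it is a positive semidefinite bilinear form and the Cauchy--Schwarz inequality applies:
\[
\mathcal{E}_a^{\mathrm{sym}}(u_1,u_2)\leq \sqrt{\mathcal{E}_a^{\mathrm{sym}}(u_1,u_1)\,\mathcal{E}_a^{\mathrm{sym}}(u_2,u_2)}=\sqrt{C_s^a(E_1)\,C_s^a(E_2)}.
\]
The definition of $M$ (applied to unit vectors and extended by bilinearity, using that $\mathcal{E}_a^{\mathrm{skew}}(-u,v)=-\mathcal{E}_a^{\mathrm{skew}}(u,v)$) gives the symmetric bound $|\mathcal{E}_a^{\mathrm{skew}}(u_1,u_2)|\leq M\|u_1\|_{H^s_0(\Omega)}\|u_2\|_{H^s_0(\Omega)}$. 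Coercivity again turns this into $\|u_i\|_{H^s_0(\Omega)}^2\leq a_*^{-1}C_s^a(E_i)$, whence $\mathcal{E}_a^{\mathrm{skew}}(u_1,u_2)\leq \frac{M}{a_*}\sqrt{C_s^a(E_1)\,C_s^a(E_2)}$.

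Combining the two estimates yields
\[
C_s^a(E_1)\leq \mathcal{E}_a(u_1,u_2)\leq \left(1+\frac{M}{a_*}\right)\sqrt{C_s^a(E_1)\,C_s^a(E_2)}.
\]
If $C_s^a(E_1)=0$ the claimed inequality is trivial; otherwise dividing by $\sqrt{C_s^a(E_1)}$ and squaring gives the stated bound. In the symmetric case $M=0$, recovering the classical monotonicity $C_s^a(E_1)\leq C_s^a(E_2)$.

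There is no serious obstacle: the only subtlety is that one must work with the symmetric part of $\mathcal{E}_a$ (not $\mathcal{E}_a$ itself) to apply Cauchy--Schwarz, and then absorb the defect of symmetry through $M$. The rest is the standard capacity-comparison manipulation.
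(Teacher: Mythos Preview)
Your proposal is correct and follows exactly the approach the paper indicates: the paper does not spell out a proof but simply says the argument is ``exactly as in Theorem~3.10 of \cite{Stampacchia1965}'', which is precisely the route you take. The decomposition into symmetric and skew-symmetric parts, Cauchy--Schwarz on the former, the bound via $M$ and coercivity on the latter, and the final division-and-squaring step are the standard Stampacchia manipulation.
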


\begin{theorem}
Let $\psi$ be an arbitrary function in $L^2_{C_s}(\Omega)$. Suppose that the closed convex set $\bar{\mathbb{K}}_\psi^s$ is such that \[\bar{\mathbb{K}}_\psi^s=\{v\in H^s_0(\Omega):\bar{v}\geq\psi \text{ q.e. in }\Omega\}\neq\emptyset.\] Then there is a unique solution to \begin{equation}\label{ComparisonCap1}u\in\bar{\mathbb{K}}_\psi^s:\quad\mathcal{E}_a(u,v-u)\geq0,\quad\forall v\in\bar{\mathbb{K}}_\psi^s,\end{equation} which is non-negative and such that \begin{equation}\label{ComparisonCap2}\norm{u}_{H^s_0(\Omega)}\leq(a^*/a_*)\norm{\psi^+}_{L^2_{C_s}(\Omega)};\end{equation} there is a unique measure $\mu=\mathcal{L}_au\geq0$, concentrated on the coincidence set $\{u=\psi\}=\{u=\psi^+\}$, verifying \begin{equation}\label{ComparisonCap3}\mathcal{E}_a(u,v)=\int_\Omega\bar{v}\,d\mu,\quad\forall v\in H^s_0(\Omega),\end{equation} and \begin{equation}\label{ComparisonCap4}\mu(K)\leq\left(\frac{a^{*2}}{a_*^{3/2}}\right)\norm{\psi^+}_{L^2_{C_s}(\Omega)}\left[C_s^a(K)\right]^{1/2},\quad\forall K\subset\subset\Omega,\end{equation} in particular $\mu$ does not charge on sets of capacity zero.
\end{theorem}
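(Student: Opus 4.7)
My overall strategy is to treat each of the four assertions in turn, invoking the Stampacchia theorem, the strict T-monotonicity of $\mathcal{E}_a$ from Theorem \ref{BTmonotone}, the Radon measure construction of Theorem \ref{RadonMeasure}, and the identification of the order dual in Theorem \ref{OrderDualChar}.

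First, existence and uniqueness of $u \in \bar{\mathbb{K}}_\psi^s$ follow directly from the Stampacchia theorem, since $\mathcal{E}_a$ is bounded and coercive on $H^s_0(\Omega)$ by Theorem \ref{DirichletFormThm} and $\bar{\mathbb{K}}_\psi^s$ is nonempty closed convex. For $u\geq 0$, I would test with $v=u^+$, which lies in $\bar{\mathbb{K}}_\psi^s$ because $u^+\geq u\geq \psi$ q.e. Since $u^+-u=u^-$, the VI reads $\mathcal{E}_a(u,u^-)\geq 0$; expanding $\mathcal{E}_a(u,u^-)=\mathcal{E}_a(u^+,u^-)-\mathcal{E}_a(u^-,u^-)$, and using the sign computation from the proof of Theorem \ref{BTmonotone} (which shows $\mathcal{E}_a(u^+,u^-)\leq 0$ since $\tilde u^+\tilde u^-\equiv 0$) together with coercivity $\mathcal{E}_a(u^-,u^-)\geq a_*\norm{u^-}_{H^s_0(\Omega)}^2$, one obtains $a_*\norm{u^-}^2\leq 0$, hence $u=u^+\geq 0$. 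For the norm bound \eqref{ComparisonCap2}, given any $\varepsilon>0$ I pick $w\in H^s_0(\Omega)$ with $\bar w\geq \psi^+\geq \psi$ q.e.\ and $\norm{w}_{H^s_0(\Omega)}\leq(1+\varepsilon)\norm{\psi^+}_{L^2_{C_s}(\Omega)}$; since $w\in\bar{\mathbb{K}}_\psi^s$, the VI together with boundedness and coercivity gives $a_*\norm{u}^2\leq\mathcal{E}_a(u,w)\leq a^*\norm{u}\norm{w}$, whence \eqref{ComparisonCap2} as $\varepsilon\to 0$.

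Next, testing the VI with $v=u+\varphi$ for $\varphi\in H^s_0(\Omega)$ with $\bar\varphi\geq 0$ q.e.\ yields $\mathcal{E}_a(u,\varphi)\geq 0$, so $\mathcal{L}_au$ is a nonnegative element of $H^{-s}(\Omega)$; Theorem \ref{OrderDualChar} then identifies $\mathcal{L}_au$ with a unique nonnegative measure $\mu\in [L^2_{C_s}(\Omega)]'$, giving the representation \eqref{ComparisonCap3} for every $v\in H^s_0(\Omega)$. The concentration of $\mu$ on $\{u=\psi^+\}$ is the main technical obstacle. Taking $v=u$ in \eqref{ComparisonCap3} gives $\int\bar u\,d\mu=\mathcal{E}_a(u,u)$. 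I would then construct a sequence $w_n\in H^s_0(\Omega)$ with $\bar w_n\geq \psi^+$ q.e.\ and $R_{C_s}(w_n-\psi^+)\to 0$: starting from a density approximant $\omega_n\in H^s_0(\Omega)\cap C_c(\Omega)$ of $\psi^+$ in $L^2_{C_s}(\Omega)$ provided by Proposition \ref{QCDensity}, and choosing $v_n\in H^s_0(\Omega)$ with $\bar v_n\geq |\omega_n-\psi^+|$ q.e.\ and $\norm{v_n}_{H^s_0(\Omega)}\to 0$ (possible by definition of $R_{C_s}$), the function $w_n:=\omega_n+v_n$ satisfies both conditions. Since $w_n\in\bar{\mathbb{K}}_\psi^s$, the VI gives $\int\bar w_n\,d\mu\geq\mathcal{E}_a(u,u)=\int\bar u\,d\mu$, and by the continuity of $\mu$ on $L^2_{C_s}(\Omega)$ (Theorem \ref{OrderDualChar}), $\int\bar w_n\,d\mu\to\int\psi^+\,d\mu$. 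Combined with $\bar u\geq \psi^+$ q.e.\ (so also $\mu$-a.e., as $\mu$ does not charge $s$-polar sets), this forces $\int(\bar u-\psi^+)\,d\mu=0$ with $\bar u-\psi^+\geq 0$ $\mu$-a.e., hence $\bar u=\psi^+$ $\mu$-a.e., which is the concentration statement.

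Finally, for the capacity estimate \eqref{ComparisonCap4}, let $\varphi_K\in H^s_0(\Omega)$ be the $(s,a)$-capacitary potential of $K$ from Theorem \ref{RadonMeasure}, so that $\bar\varphi_K\geq 1$ q.e.\ on $K$ and $\mathcal{E}_a(\varphi_K,\varphi_K)=C_s^a(K)$. Then
\[\mu(K)\leq\int\bar\varphi_K\,d\mu=\mathcal{E}_a(u,\varphi_K)\leq a^*\norm{u}_{H^s_0(\Omega)}\norm{\varphi_K}_{H^s_0(\Omega)},\]
and coercivity gives $\norm{\varphi_K}^2_{H^s_0(\Omega)}\leq C_s^a(K)/a_*$; combining with \eqref{ComparisonCap2} yields \eqref{ComparisonCap4}. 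The last assertion follows at once, since $C_s^a(K)=0$ forces $\mu(K)=0$ by \eqref{ComparisonCap4}. The principal technical step is the construction of the approximating sequence $w_n$ in the concentration argument, which requires carefully combining the density Proposition \ref{QCDensity} with the very definition of the $L^2_{C_s}$ norm to secure simultaneously the q.e.\ domination $\bar w_n\geq\psi^+$ and the convergence $w_n\to\psi^+$ in $L^2_{C_s}(\Omega)$.
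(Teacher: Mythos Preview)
Your proposal is correct and follows essentially the paper's approach: Stampacchia for existence, testing with $u^+=u+u^-$ for nonnegativity, testing with near-optimal competitors for the norm bound \eqref{ComparisonCap2}, and the Riesz--Schwartz/order-dual identification for the measure. Two minor differences are worth noting. First, you supply a full argument for the concentration of $\mu$ on the coincidence set, whereas the paper simply refers to the argument of Theorem \ref{RadonMeasure}; your approximation $w_n\to\psi^+$ in $L^2_{C_s}(\Omega)$ with $\bar w_n\geq\psi^+$ q.e.\ is correct and makes this step self-contained. Second, for \eqref{ComparisonCap4} the paper estimates $\mu(K)\leq(a^{*2}/a_*)\norm{\psi^+}_{L^2_{C_s}}\norm{v}_{H^s_0}$ for arbitrary $v\in\mathbb{K}_K^s$, takes the infimum to produce $[C_s(K)]^{1/2}$, and then invokes Proposition \ref{CapProp5.4} to pass from $C_s(K)$ to $C_s^a(K)$; your route via the $(s,a)$-capacitary potential $\varphi_K$ and the coercivity bound $\norm{\varphi_K}_{H^s_0}^2\leq C_s^a(K)/a_*$ is slightly more direct and bypasses Proposition \ref{CapProp5.4} altogether, arriving at the same constant.
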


\begin{proof}
By the maximum principle Proposition \ref{ObsPbProperties}(ii), taking $v=u+u^-$, the solution is non-negative. Hence, the variational inequality \eqref{ComparisonCap1} is equivalent to solving the variational inequality with $\bar{\mathbb{K}}_\psi^s$ replaced by $\bar{\mathbb{K}}_{\psi^+}^s$. Since $\psi^+\in L^2_{C_s}(\Omega)$, by definition of $L^2_{C_s}(\Omega)$, $\bar{\mathbb{K}}_{\psi^+}^s\neq\emptyset$ and we can apply the Stampacchia theorem to obtain a unique non-negative solution.

Since $\mathcal{E}_a(u,v-u)\geq0$, \[a_*\norm{u}_{H^s_0(\Omega)}^2\leq\mathcal{E}_a(u,u)\leq\mathcal{E}_a(u,v)\leq a^*\norm{u}_{H^s_0(\Omega)}\norm{v}_{H^s_0(\Omega)},\] we have \[\norm{u}_{H^s_0(\Omega)}\leq(a^*/a_*)\norm{v}_{H^s_0(\Omega)},\quad\forall v\in \bar{\mathbb{K}}_{\psi^+}^s,\] which, by the definition of the $L^2_{C_s}(\Omega)$ norm of $\psi^+$, gives \eqref{ComparisonCap2}.

The existence of Radon measure for \eqref{ComparisonCap3} follows exactly as in Theorem \ref{RadonMeasure}. Finally, recalling the definitions, it is sufficient to prove \eqref{ComparisonCap4} for any compact subset $K\subset\Omega$. But this follows from \[\mu(K)\leq\int_\Omega\bar{v}\,d\mu=\mathcal{E}_a(u,v)\leq a^*\norm{u}_{H^s_0(\Omega)}\norm{v}_{H^s_0(\Omega)}\leq(a^{*2}/a_*)\norm{\psi^+}_{L^2_{C_s}(\Omega)}\norm{v}_{H^s_0(\Omega)},\quad\forall v\in \mathbb{K}_K^s.\] Now, recall from Proposition \ref{CapProp5.4} that we have \[C_s^a(K)\geq a_*C_s(K)=a_*\inf_{v\in\mathbb{K}_K^s}\norm{v}_{H^s_0(\Omega)}^2\] thereby obtaining \eqref{ComparisonCap4}.
\end{proof}

\begin{corollary}
If $u$ and $\hat{u}$ are the solutions to \eqref{ComparisonCap1} with non-negative compatible obstacles $\psi$ and $\hat{\psi}$ in $L^2_{C_s}(\Omega)$ respectively, then \[\norm{u-\hat{u}}_{H^s_0(\Omega)}\leq k\|\psi-\hat{\psi}\|_{L^2_{C_s}(\Omega)}^{1/2},\] where \[k=(a^*/a_*)\left[\norm{\psi}_{L^2_{C_s}(\Omega)}+\|\hat{\psi}\|_{L^2_{C_s}(\Omega)}\right]^{1/2}.\]
\end{corollary}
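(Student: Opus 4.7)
The plan is to compare the two variational inequalities by constructing admissible test functions that differ from the respective solutions by an $H^s_0(\Omega)$ function controlled by the $L^2_{C_s}$-norm of $\psi-\hat\psi$. Concretely, given any $w\in H^s_0(\Omega)$ with $\bar w\geq|\psi-\hat\psi|$ quasi-everywhere, I would observe that
\[
\overline{\hat u+w}=\bar{\hat u}+\bar w\geq \hat\psi+|\psi-\hat\psi|\geq\psi\quad\text{q.e. in }\Omega,
\]
so $\hat u+w\in\bar{\mathbb{K}}_\psi^s$, and by symmetry $u+w\in\bar{\mathbb{K}}_{\hat\psi}^s$. Plugging these into \eqref{ComparisonCap1} for $u$ and $\hat u$, respectively, yields
\[
\mathcal{E}_a(u,\hat u-u)\geq -\mathcal{E}_a(u,w),\qquad \mathcal{E}_a(\hat u,u-\hat u)\geq -\mathcal{E}_a(\hat u,w).
\]

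Adding these inequalities and using bilinearity gives $\mathcal{E}_a(u-\hat u,u-\hat u)\leq \mathcal{E}_a(u+\hat u,w)$. Coercivity and boundedness of $\mathcal{E}_a$ then yield
\[
a_*\norm{u-\hat u}_{H^s_0(\Omega)}^{2}\leq a^{*}\bigl(\norm{u}_{H^s_0(\Omega)}+\norm{\hat u}_{H^s_0(\Omega)}\bigr)\norm{w}_{H^s_0(\Omega)}.
\]
The a priori bound \eqref{ComparisonCap2} applied to $u$ and $\hat u$ controls $\norm{u}_{H^s_0(\Omega)}+\norm{\hat u}_{H^s_0(\Omega)}$ by $(a^*/a_*)[\norm{\psi}_{L^2_{C_s}(\Omega)}+\|\hat\psi\|_{L^2_{C_s}(\Omega)}]$, so taking the infimum over admissible $w$ replaces $\norm{w}_{H^s_0(\Omega)}$ by $R_{C_s}(\psi-\hat\psi)=\|\psi-\hat\psi\|_{L^2_{C_s}(\Omega)}$. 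Extracting the square root gives exactly the asserted inequality with the stated constant $k$.

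The only subtle point, and hence the main thing to verify carefully, is the quasi-everywhere manipulation: one must check that for $v\in H^s_0(\Omega)$ the quasi-continuous representative is additive (i.e. $\overline{v_1+v_2}=\bar v_1+\bar v_2$ q.e.) and compatible with the inequality $\bar v\geq\psi$ q.e. defining $\bar{\mathbb{K}}_\psi^s$, which is standard from the theory of quasi-continuous representatives recalled in Theorem \ref{QCApprox} and the discussion preceding Proposition \ref{QCDensity}. Once this is in place, the argument is a straightforward two-inequality comparison, entirely analogous to the proof of the Lipschitz dependence in Theorem \ref{ObsPbThm}, with the $H^{-s}$ duality replaced by the $L^2_{C_s}$ duality via the defining infimum of $R_{C_s}$.
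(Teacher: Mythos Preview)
Your argument is correct and yields exactly the stated constant. It is, however, a genuinely different route from the paper's. The paper works through the Radon measures $\mu=\mathcal{L}_a u$ and $\hat\mu=\mathcal{L}_a\hat u$ furnished by \eqref{ComparisonCap3}: since $\mathrm{supp}(\mu)\subset\{u=\psi\}$ and $\mathrm{supp}(\hat\mu)\subset\{\hat u=\hat\psi\}$, one can replace $u-\hat u$ by $\psi-\hat\psi$ in the integrals $\int(u-\hat u)\,d\mu$ and $\int(u-\hat u)\,d\hat\mu$, and then dominate by $\int|\psi-\hat\psi|\,d(\mu+\hat\mu)\leq\int\bar v\,d(\mu+\hat\mu)=\mathcal{E}_a(u,v)+\mathcal{E}_a(\hat u,v)$ for any $v\in\bar{\mathbb K}^s_{|\psi-\hat\psi|}$. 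Your approach bypasses the measure representation entirely: you manufacture admissible competitors $\hat u+w$ and $u+w$ directly from any $w$ with $\bar w\geq|\psi-\hat\psi|$ q.e., and the variational inequalities themselves give $\mathcal{E}_a(u-\hat u,u-\hat u)\leq\mathcal{E}_a(u+\hat u,w)$. Both arguments finish identically via boundedness, \eqref{ComparisonCap2}, and the definition of $R_{C_s}$. Your version is more self-contained (it does not invoke the support property of $\mu$ from the preceding theorem), while the paper's version makes explicit the role of the coincidence sets and the measure structure.
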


\begin{proof}
Since $supp(\mu)\subset\{u=\psi\}$ and $supp(\hat{\mu})\subset\{\hat{u}=\hat{\psi}\}$ (where $\mu=\mathcal{L}_au$ and $\hat{\mu}=\mathcal{L}_a\hat{u}$), for an arbitrary $v\in\bar{K}^s_{|\psi-\hat{\psi}|}$, we have \begin{align*}a_*\norm{u-\hat{u}}_{H^s_0(\Omega)}^2&\leq\mathcal{E}_a(u-\hat{u},u-\hat{u})\\&=\mathcal{E}_a(u,u-\hat{u})-\mathcal{E}_a(\hat{u},u-\hat{u})\\&=\int_\Omega(u-\hat{u})\,d\mu-\int_\Omega(u-\hat{u})d\hat{\mu}\text{ by setting }\bar{v}=u-\hat{u} \text{ for }\mu,\hat{\mu}\text{ in \eqref{ComparisonCap3}}\\&\leq\int_\Omega(\psi-\hat{\psi})\,d\mu-\int_\Omega(\psi-\hat{\psi})d\hat{\mu}\\&\leq\int_\Omega|\psi-\hat{\psi}|d(\mu+\hat{\mu})\\&\leq\int_\Omega\bar{v}d(\mu+\hat{\mu}) \text{ since }v\in\bar{K}^s_{|\psi-\hat{\psi}|}\\&=\int_\Omega\bar{v}\,d\mu+\int_\Omega\bar{v}d\hat{\mu}\\&=\mathcal{E}_a(u,v)+\mathcal{E}_a(\hat{u},v)\\&\leq a^*\left[\norm{u}_{H^s_0(\Omega)}+\norm{\hat{u}}_{H^s_0(\Omega)}\right]\norm{v}_{H^s_0(\Omega)}\\&\leq\frac{a^{*2}}{a_*}\left[\norm{\psi}_{L^2_{C_s}(\Omega)}+\|\hat{\psi}\|_{L^2_{C_s}(\Omega)}\right]\norm{v}_{H^s_0(\Omega)}\text{ by \eqref{ComparisonCap2}}\\&\leq\frac{a^{*2}}{a_*}\left[\norm{\psi}_{L^2_{C_s}(\Omega)}+\|\hat{\psi}\|_{L^2_{C_s}(\Omega)}\right]\|\psi-\hat{\psi}\|_{L^2_{C_s}(\Omega)}
\end{align*}
since $v$ is arbitrary in $\bar{K}^s_{|\psi-\hat{\psi}|}$, by the definition of the norm of $|\psi-\hat{\psi}|$ in $L^2_{C_s}(\Omega)$.
\end{proof}

\begin{remark}
Further properties on the $s$-capacity and the regularity of the solution to the $s$-obstacle problem are an interesting topic to be developed. For instance, as in the classical local case $s=1$ \cite{AdamsCapacityObstacle}, it would be interesting to show that $\psi$ is compatible, i.e. $\bar{\mathbb{K}}^s_\psi\neq\emptyset$, if and only if \[\int_0^\infty C_s(\{|\psi^+|>t\})dt^2<\infty.\] \end{remark}

\noindent \textbf{Acknowledgements.} 
C. Lo acknowledges the FCT PhD fellowship in the framework of the LisMath doctoral programme at the University of Lisbon. The research of J. F. Rodrigues was partially done under the framework of the Project PTDC/MATPUR/28686/2017 at CMAFcIO/ULisboa. Also, many thanks to Alan Aw, Pedro Campos, Lisa Santos, Shuang Wu, and an anonymous referee, for providing useful comments.

\bibliographystyle{plain}

%\bibliography{ref.bib}

\begin{thebibliography}{10}

\bibitem{AbatangeloRosOton}
Nicola Abatangelo and Xavier Ros-Oton.
\newblock Obstacle problems for integro-differential operators: higher
  regularity of free boundaries.
\newblock {\em Adv. Math.}, 360:106931, 61, 2020.

\bibitem{AdamsCapacityObstacle}
David~R. Adams.
\newblock Capacity and the obstacle problem.
\newblock {\em Appl. Math. Optim.}, 8(1):39--57, 1982.

\bibitem{AdamsHedberg}
David~R. Adams and Lars~Inge Hedberg.
\newblock {\em Function spaces and potential theory}, volume 314 of {\em
  Grundlehren der Mathematischen Wissenschaften [Fundamental Principles of
  Mathematical Sciences]}.
\newblock Springer-Verlag, Berlin, 1996.

\bibitem{Adams}
Robert~A. Adams.
\newblock {\em Sobolev spaces}.
\newblock Academic Press [A subsidiary of Harcourt Brace Jovanovich,
  Publishers], New York-London, 1975.
\newblock Pure and Applied Mathematics, Vol. 65.

\bibitem{AntilRautenberg}
Harbir Antil and Carlos~N. Rautenberg.
\newblock Fractional elliptic quasi-variational inequalities: theory and
  numerics.
\newblock {\em Interfaces Free Bound.}, 20(1):1--24, 2018.

\bibitem{AttouchPicard}
H\'{e}dy Attouch and Colette Picard.
\newblock In\'{e}quations variationnelles avec obstacles et espaces
  fonctionnels en th\'{e}orie du potentiel.
\newblock {\em Applicable Anal.}, 12(4):287--306, 1981.

\bibitem{Rodrigues2005NMembrane}
Assis Azevedo, Jos\'{e}-Francisco Rodrigues, and Lisa Santos.
\newblock The {$N$}-membranes problem for quasilinear degenerate systems.
\newblock {\em Interfaces Free Bound.}, 7(3):319--337, 2005.

\bibitem{BellidoCuetoMoraCorral2021CVPDE}
Jos\'{e}~C. Bellido, Javier Cueto, and Carlos Mora-Corral.
\newblock {$\Gamma $}-convergence of polyconvex functionals involving
  s-fractional gradients to their local counterparts.
\newblock {\em Calc. Var. Partial Differential Equations}, 60(1):Paper No. 7,
  2021.

\bibitem{BiccariWarmaZuazua}
Umberto Biccari, Mahamadi Warma, and Enrique Zuazua.
\newblock Local elliptic regularity for the {D}irichlet fractional {L}aplacian.
\newblock {\em Adv. Nonlinear Stud.}, 17(2):387--409, 2017.

\bibitem{FiniteElement}
Andrea Bonito, Wenyu Lei, and Abner~J. Salgado.
\newblock Finite element approximation of an obstacle problem for a class of
  integro-differential operators.
\newblock {\em ESAIM Math. Model. Numer. Anal.}, 54(1):229--253, 2020.


% \bibitem{HALecture}
% Russell Brown.
% \newblock Lecture notes: harmonic analysis ({U}niversity of {K}entucky), June
%   2001.
% \newblock URL: \url{http://www.ms.uky.edu/~rbrown/courses/ma773/notes.pdf}.
%   Last visited on 2020/12/29.

\bibitem{FracLapMaxPrinciple}
Xavier Cabr\'{e} and Yannick Sire.
\newblock Nonlinear equations for fractional {L}aplacians, {I}: {R}egularity,
  maximum principles, and {H}amiltonian estimates.
\newblock {\em Ann. Inst. H. Poincar\'{e} Anal. Non Lin\'{e}aire},
  31(1):23--53, 2014.

\bibitem{CaffarelliDeSilvaSavin}
Luis Caffarelli, Daniela De~Silva, and Ovidiu~V. Savin.
\newblock The two membranes problem for different operators.
\newblock {\em Ann. Inst. H. Poincar\'{e} Anal. Non Lin\'{e}aire},
  34(4):899--932, 2017.

\bibitem{CaffarelliSalsaSilvestre}
Luis~A. Caffarelli, Sandro Salsa, and Luis Silvestre.
\newblock Regularity estimates for the solution and the free boundary of the
  obstacle problem for the fractional {L}aplacian.
\newblock {\em Invent. Math.}, 171(2):425--461, 2008.

\bibitem{KassmannChaker}
Jamil Chaker and Moritz Kassmann.
\newblock Nonlocal operators with singular anisotropic kernels.
\newblock {\em Comm. Partial Differential Equations}, 45(1):1--31, 2020.

\bibitem{Comi2}
Giovanni~E. Comi and Giorgio Stefani.
\newblock A distributional approach to fractional {S}obolev spaces and fractional
  variation: asymptotics i.
\newblock {\em arXiv: 1910.13419 [math.FA]}, 2019.

\bibitem{Comi1}
Giovanni~E. Comi and Giorgio Stefani.
\newblock A distributional approach to fractional {S}obolev spaces and fractional
  variation: Existence of blow-up.
\newblock {\em Journal of Functional Analysis}, 277(10):3373 -- 3435, 2019.

\bibitem{DanielliPetrosyanPopObsPbMarkov}
Donatella Danielli, Arshak Petrosyan, and Camelia~A. Pop.
\newblock Obstacle problems for nonlocal operators.
\newblock In {\em New developments in the analysis of nonlocal operators},
  volume 723 of {\em Contemp. Math.}, pages 191--214. Amer. Math. Soc.,
  Providence, RI, 2019.

\bibitem{DanielliSalsa}
Donatella Danielli and Sandro Salsa.
\newblock Obstacle problems involving the fractional {L}aplacian.
\newblock In {\em Recent developments in nonlocal theory}, pages 81--164. De
  Gruyter, Berlin, 2018.


\bibitem{DElia2020unified}
Marta D'Elia, Mamikon Gulian, Hayley Olson, and George~Em Karniadakis.
\newblock A unified theory of fractional, nonlocal, and weighted nonlocal
  vector calculus.
\newblock {\em arXiv preprint arXiv:2005.07686}, 5 2020.

\bibitem{delia2021analysis}
Marta D'Elia and Mamikon Gulian.
\newblock Analysis of anisotropic nonlocal diffusion models: Well-posedness of
  fractional problems for anomalous transport.
\newblock {\em arXiv preprint arXiv:2101.04289}, 1 2021.

\bibitem{Demengel}
Fran\c{c}oise Demengel and Gilbert Demengel.
\newblock {\em Functional spaces for the theory of elliptic partial
  differential equations}.
\newblock Universitext. Springer, London; EDP Sciences, Les Ulis, 2012.
\newblock Translated from the 2007 French original by Reinie Ern\'{e}.

\bibitem{KumagaiDeuschelNonsymmetricKernel}
Jean-Dominique Deuschel and Takashi Kumagai.
\newblock Markov chain approximations to nonsymmetric diffusions with bounded
  coefficients.
\newblock {\em Comm. Pure Appl. Math.}, 66(6):821--866, 2013.

\bibitem{HitchhikerGuide}
{Eleonora Di~Nezza, Giampiero Palatucci, and Enrico Valdinoci}
\newblock Hitchhiker's guide to the fractional {S}obolev spaces.
\newblock {\em Bull. Sci. Math. }136(5):521--573, 2012.

\bibitem{DuGunzburgerLehoucqZhouNonlocalDiffusion}
Qiang Du, Max Gunzburger, Richard~B. Lehoucq, and Kun Zhou.
\newblock Analysis and approximation of nonlocal diffusion problems with volume
  constraints.
\newblock {\em SIAM Rev.}, 54(4):667--696, 2012.

\bibitem{DuGunzburgerLehoucqZhouNonlocalVectorCalculus}
Qiang Du, Max Gunzburger, Richard~B. Lehoucq, and Kun Zhou.
\newblock A nonlocal vector calculus, nonlocal volume-constrained problems, and
  nonlocal balance laws.
\newblock {\em Math. Models Methods Appl. Sci.}, 23(3):493--540, 2013.

\bibitem{KassmannDyda}
Bart{\l}omiej Dyda and Moritz Kassmann.
\newblock Regularity estimates for elliptic nonlocal operators.
\newblock {\em Anal. PDE}, 13(2):317--370, 2020.

\bibitem{Fall2020CVPDE}
Mouhamed~Moustapha Fall.
\newblock Regularity results for nonlocal equations and applications.
\newblock {\em Calc. Var. Partial Differential Equations}, 59(5):Paper No. 181,
  53, 2020.

\bibitem{FelsingerKassmannVoigt}
Matthieu Felsinger, Moritz Kassmann, and Paul Voigt.
\newblock The {D}irichlet problem for nonlocal operators.
\newblock {\em Math. Z.}, 279(3-4):779--809, 2015.

\bibitem{FukushimaDirichletForms}
Masatoshi Fukushima, Yoichi Oshima, and Masayoshi Takeda.
\newblock {\em Dirichlet forms and symmetric {M}arkov processes}, volume~19 of
  {\em De Gruyter Studies in Mathematics}.
\newblock Walter de Gruyter \& Co., Berlin, extended edition, 2011.

\bibitem{GMosconi}
Nicola Gigli and Sunra Mosconi.
\newblock The abstract {L}ewy-{S}tampacchia inequality and applications.
\newblock {\em J. Math. Pures Appl. (9)}, 104(2):258--275, 2015.

\bibitem{GunzburgerLehoucq}
Max Gunzburger and Richard~B. Lehoucq.
\newblock A nonlocal vector calculus with application to nonlocal boundary
  value problems.
\newblock {\em Multiscale Model. Simul.}, 8(5):1581--1598, 2010.

\bibitem{KassmannDeGiorgiNonlocal}
Moritz Kassmann.
\newblock The theory of {D}e {G}iorgi for non-local operators.
\newblock {\em C. R. Math. Acad. Sci. Paris}, 345(11):621--624, 2007.

\bibitem{KassmannHolder}
Moritz Kassmann.
\newblock A priori estimates for integro-differential operators with measurable
  kernels.
\newblock {\em Calc. Var. Partial Differential Equations}, 34(1):1--21, 2009.

\bibitem{KassmannHarnack}
Moritz Kassmann.
\newblock A new formulation of {H}arnack's inequality for nonlocal operators.
\newblock {\em C. R. Math. Acad. Sci. Paris}, 349(11-12):637--640, 2011.

\bibitem{KinderlehrerStampacchia}
David Kinderlehrer and Guido Stampacchia.
\newblock {\em An introduction to variational inequalities and their
  applications}, volume~31 of {\em Classics in Applied Mathematics}.
\newblock Society for Industrial and Applied Mathematics (SIAM), Philadelphia,
  PA, 2000.
\newblock Reprint of the 1980 original.

\bibitem{BoundedSolnEstimates}
Tommaso Leonori, Ireneo Peral, Ana Primo, and Fernando Soria.
\newblock Basic estimates for solutions of a class of nonlocal elliptic and
  parabolic equations.
\newblock {\em Discrete Contin. Dyn. Syst.}, 35(12):6031--6068, 2015.

\bibitem{MaRockner}
Zhi~Ming Ma and Michael R\"{o}ckner.
\newblock {\em Introduction to the theory of (nonsymmetric) {D}irichlet forms}.
\newblock Universitext. Springer-Verlag, Berlin, 1992.

% \bibitem{MizutaBook}
% Yoshihiro Mizuta.
% \newblock {\em Potential theory in {E}uclidean spaces}, volume~6 of {\em GAKUTO
%   International Series. Mathematical Sciences and Applications}.
% \newblock Gakk\"{o}tosho Co., Ltd., Tokyo, 1996.

\bibitem{Mosco1976}
Umberto Mosco.
\newblock Implicit variational problems and quasi variational inequalities.
\newblock In {\em Nonlinear operators and the calculus of variations ({S}ummer
  {S}chool, {U}niv. {L}ibre {B}ruxelles, {B}russels, 1975)}, pages 83--156.
  Lecture Notes in Math., Vol. 543. 1976.

\bibitem{MusinaNazarovVarIneqSpectral}
Roberta Musina and Alexander~I. Nazarov.
\newblock Variational inequalities for the spectral fractional {L}aplacian.
\newblock {\em Comput. Math. Math. Phys.}, 57(3):373--386, 2017.

\bibitem{MusinaNazarovSreenadhVarIneqFracLap}
Roberta Musina, Alexander~I. Nazarov, and Konijeti Sreenadh.
\newblock Variational inequalities for the fractional {L}aplacian.
\newblock {\em Potential Anal.}, 46(3):485--498, 2017.

\bibitem{Application}
Ricardo~H. Nochetto, Enrique Ot\'{a}rola, and Abner~J. Salgado.
\newblock Convergence rates for the classical, thin and fractional elliptic
  obstacle problems.
\newblock {\em Philos. Trans. Roy. Soc. A}, 373(2050):20140449, 14, 2015.

\bibitem{ObstacleProblems}
Jos\'{e}-Francisco Rodrigues.
\newblock {\em Obstacle problems in mathematical physics}, volume 134 of {\em
  North-Holland Mathematics Studies}.
\newblock North-Holland Publishing Co., Amsterdam, 1987.
\newblock Notas de Matem\'{a}tica [Mathematical Notes], 114.

\bibitem{RodriguesTeymurazyan}
Jos\'{e}~Francisco Rodrigues and Rafayel Teymurazyan.
\newblock On the two obstacles problem in {O}rlicz-{S}obolev spaces and
  applications.
\newblock {\em Complex Var. Elliptic Equ.}, 56(7-9):769--787, 2011.

\bibitem{RodriguesSantos}
José~Francisco Rodrigues and Lisa Santos.
\newblock On nonlocal variational and quasi-variational inequalities with
  fractional gradient.
\newblock {\em Applied Mathematics \& Optimization}, 80, no. 3:835 -- 852,
  2019 (see Correction in \url{https://arxiv.org/pdf/1903.02646.pdf}).

\bibitem{RosOton2016Survey}
Xavier Ros-Oton.
\newblock Nonlocal elliptic equations in bounded domains: a survey.
\newblock {\em Publ. Mat.}, 60(1):3--26, 2016.

\bibitem{RosOtonObsPb}
Xavier Ros-Oton.
\newblock Obstacle problems and free boundaries: an overview.
\newblock {\em SeMA J.}, 75(3):399--419, 2018.

\bibitem{RosOton2014Regularity}
Xavier Ros-Oton and Joaquim Serra.
\newblock The {D}irichlet problem for the fractional {L}aplacian: regularity up
  to the boundary.
\newblock {\em J. Math. Pures Appl. (9)}, 101(3):275--302, 2014.

\bibitem{SalsaSignorini}
Sandro Salsa.
\newblock Optimal regularity in lower dimensional obstacle problems.
\newblock In {\em Subelliptic {PDE}'s and applications to geometry and
  finance}, volume~6 of {\em Lect. Notes Semin. Interdiscip. Mat.}, pages
  217--226. Semin. Interdiscip. Mat. (S.I.M.), Potenza, 2007.
  
\bibitem{ServadeiValdinoci2013RMILewyStampacchia}
Raffaella Servadei and Enrico Valdinoci.
\newblock Lewy-{S}tampacchia type estimates for variational inequalities driven
  by (non)local operators.
\newblock {\em Rev. Mat. Iberoam.}, 29(3):1091--1126, 2013.

\bibitem{SS1}
Tien-Tsan Shieh and Daniel Spector.
\newblock On a new class of fractional partial differential equations.
\newblock {\em Advances in Calculus of Variations}, 8:321 -- 366, 2014.

\bibitem{SS2}
Tien-Tsan Shieh and Daniel Spector.
\newblock On a new class of fractional partial differential equations ii.
\newblock {\em Advances in Calculus of Variations}, 11:289 -- 307, 2017.

\bibitem{Silhavy}
Miroslav Silhavy.
\newblock Fractional vector analysis based on invariance requirements (critique
  of coordinate approaches).
\newblock {\em Continuum Mechanics and Thermodynamics}, 32, Issue 1:207 -- 288,
  2020.

\bibitem{SilvestrePaper}
Luis Silvestre.
\newblock Regularity of the obstacle problem for a fractional power of the
  {L}aplace operator.
\newblock {\em Comm. Pure Appl. Math.}, 60(1):67--112, 2007.

\bibitem{SilvestreThesis}
Luis~Enrique Silvestre.
\newblock {\em Regularity of the obstacle problem for a fractional power of the
  {L}aplace operator}.
\newblock ProQuest LLC, Ann Arbor, MI, 2005.
\newblock Thesis (Ph.D.)--The University of Texas at Austin.

\bibitem{Stampacchia1965}
Guido Stampacchia.
\newblock Le probl\`eme de {D}irichlet pour les \'{e}quations elliptiques du
  second ordre \`a coefficients discontinus.
\newblock {\em Ann. Inst. Fourier (Grenoble)}, 15(fasc. 1):189--258, 1965.

% \bibitem{Steinbook}
% Elias~M. Stein.
% \newblock {\em Singular integrals and differentiability properties of
%   functions}.
% \newblock Princeton Mathematical Series, No. 30. Princeton University Press,
%   Princeton, N.J., 1970.

\bibitem{WarmaFracCap}
Mahamadi Warma.
\newblock The fractional relative capacity and the fractional {L}aplacian with
  {N}eumann and {R}obin boundary conditions on open sets.
\newblock {\em Potential Anal.}, 42(2):499--547, 2015.

\end{thebibliography}

\end{document}